   \newtheorem{definition}{Definition}[section]
   \newtheorem{lemma}{Lemma}[section]
   \newtheorem{theorem}{Theorem}[section]
   \newtheorem{remark}{Remark}[section]
   \newcommand{\be}{\begin{equation}}
   \newcommand{\ee}{\end{equation}}
\begin{document}
    \title{A robust time-split linearized explicit/implicit technique for two-dimensional hydrodynamic model: an application to floods in Cameroon far north region}
   \author{Eric Ngondiep$^{\text{\,a\,b}}$ \thanks{\textbf{Email address:} ericngondiep@gmail.com}}
   \date{$^{\text{\,a\,}}$\small{Department of Mathematics and Statistics, College of Science, Imam Mohammad Ibn Saud\\ Islamic University
        (IMSIU), $90950$ Riyadh $11632,$ Saudi Arabia.}\\
     \text{\,}\\
       $^{\text{\,b\,}}$\small{Hydrological Research Center, Institute for Geological and Mining Research, 4110 Yaounde-Cameroon.}}
   \maketitle

   \textbf{Abstract.}
    This paper deals with a time-split explicit/implicit approach for solving a two-dimensional hydrodynamic flow model with appropriate initial and boundary conditions. The time-split technique is employed to upwind the convection term and to treat the friction slope so that the numerical oscillations and stability are well controlled. A suitable time step restriction for stability and convergence accurate of the new algorithm is established using the $L^{\infty}(0,T; L^{2})$-norm. Under a time step requirement, some numerical examples confirm the theoretical studies and suggest that the proposed computational technique is spatial fourth-order accurate and temporal second-order convergent. An application to floods observed in Cameroon far north region is considered and discussed.\\
    \text{\,}\\

   \ \noindent {\bf Keywords:} two-dimensional hydrodynamic flow model, time-split technique, a linearized time-split explicit/implicit approach, time step restriction, stability analysis and convergence order.\\
   \\
   {\bf AMS Subject Classification (MSC): 65M12, 65M06}.

  \section{Introduction and motivation}\label{sec1}

   \text{\,\,\,\,\,\,\,\,\,\,}Efficient computational approaches to forecast floods can provide useful information for water resource management and inundation risk mitigation. Flood estimations such as water depth and spatial flood extent are crucial tools (information) allowing the stake holders to develop important strategies and make useful decisions in future flood management and risk prevention strategy design. Additionally, inundations are very important in natural water cycle which enables the experts to construct efficient computational tools to assess healthy ecosystems \cite{hernonin2013,yu2014}. Researchers are sometimes confronted with the challenge of forecasting the timing and magnitude of rainfall generated run-off from watersheds for pollution prevention, flood control and ecosystem goals. A component of the overland flow is the shallow water flow which results when the rainfall rate exceeds the soil infiltration capacity in some areas of the watershed (see Figure $\ref{fig1}$). However, the overland flow velocities and depths are highly discontinuous in time and space variables. This is due to a small natural variation in spaces of soil hydraulic properties and small-scale ground surface micro-topography. Because of the complexity of the hydrodynamic flow problem and the numerical difficulties in simulating this model, researchers have been obliged to simulate complex hillslopes as plane surfaces with constant hydraulic properties and Thacker's axisymmetrical solution (see Figure $\ref{fig1}$, \textbf{Figure 1.iv}), both solutions consider the kinematic wave approximation to full hydrodynamic equations. However, the small-scale dynamic interactions between the surface and ground surface flows are ignored since this technique does not deal explicitly with the space variable soil properties and micro-topography.\\

     \begin{figure}
         \begin{center}
         \begin{tabular}{c c}
         \psfig{file=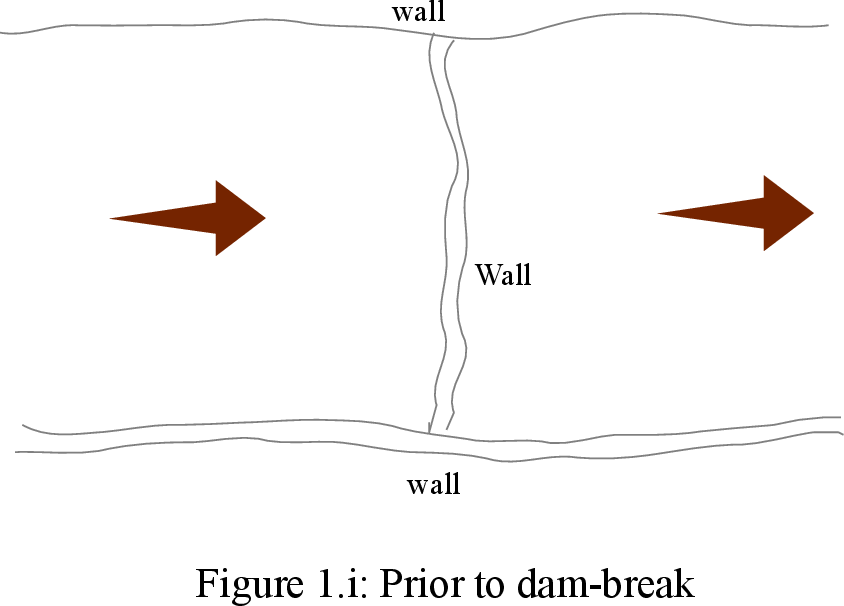,width=7cm} & \psfig{file=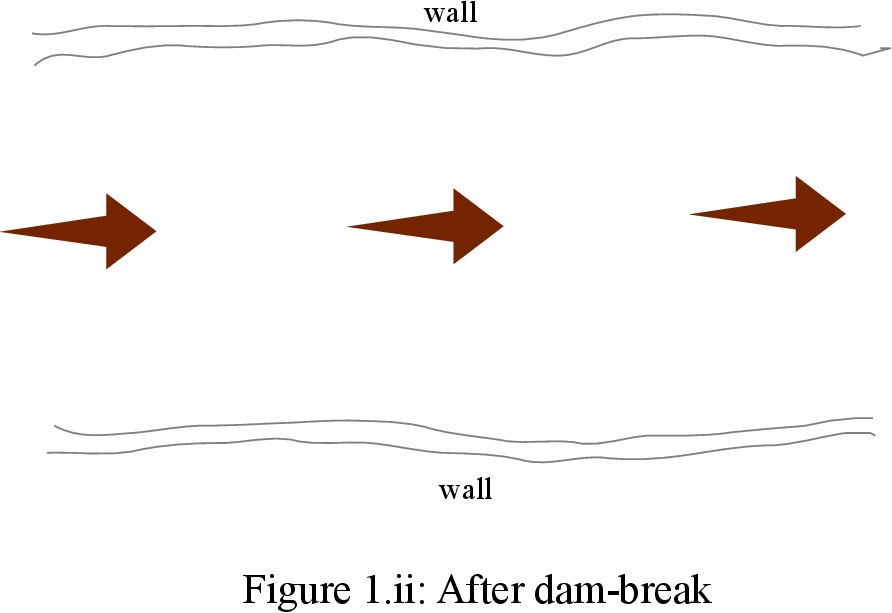,width=7cm}\\
         \psfig{file=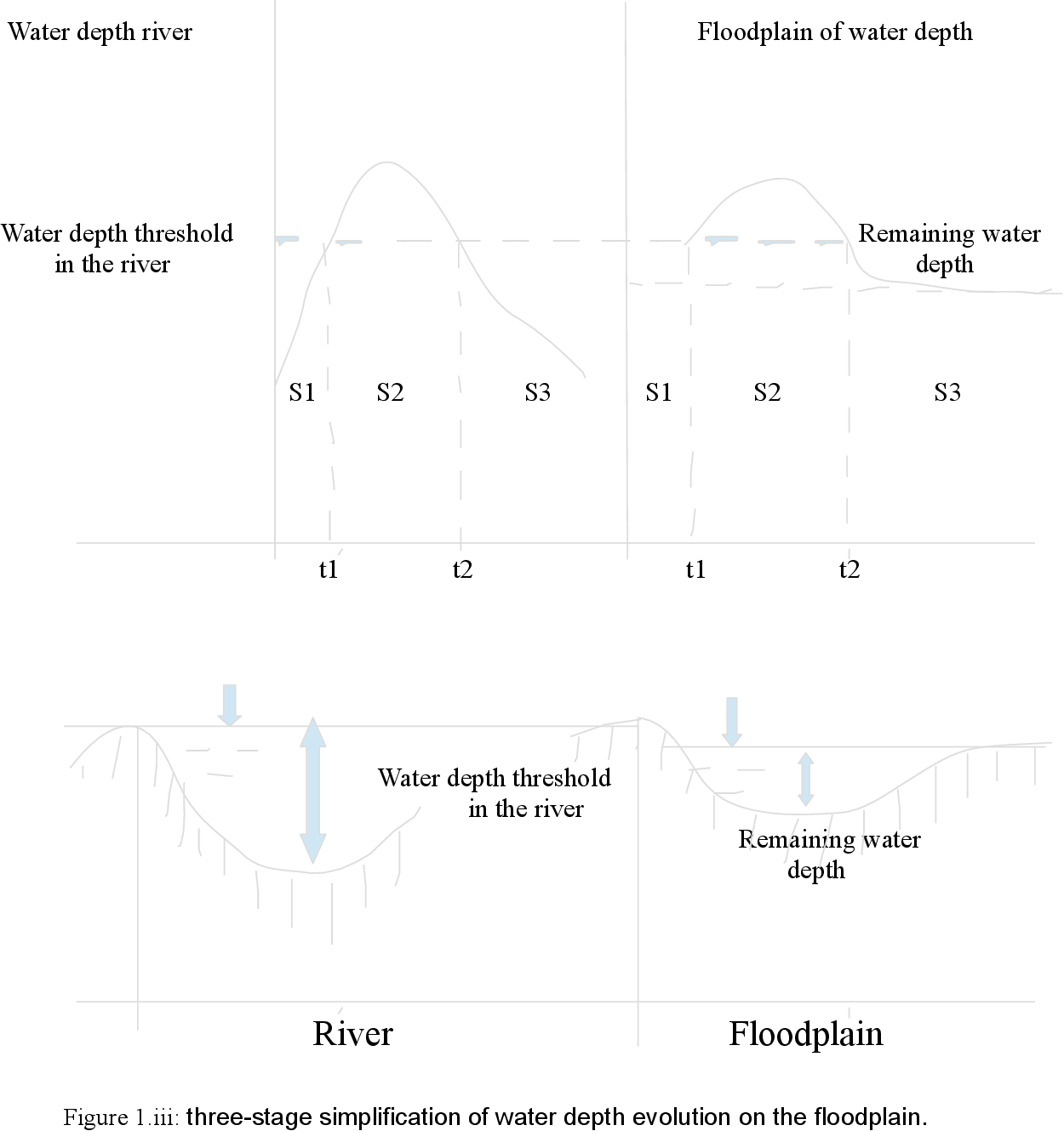,width=7cm} & \psfig{file=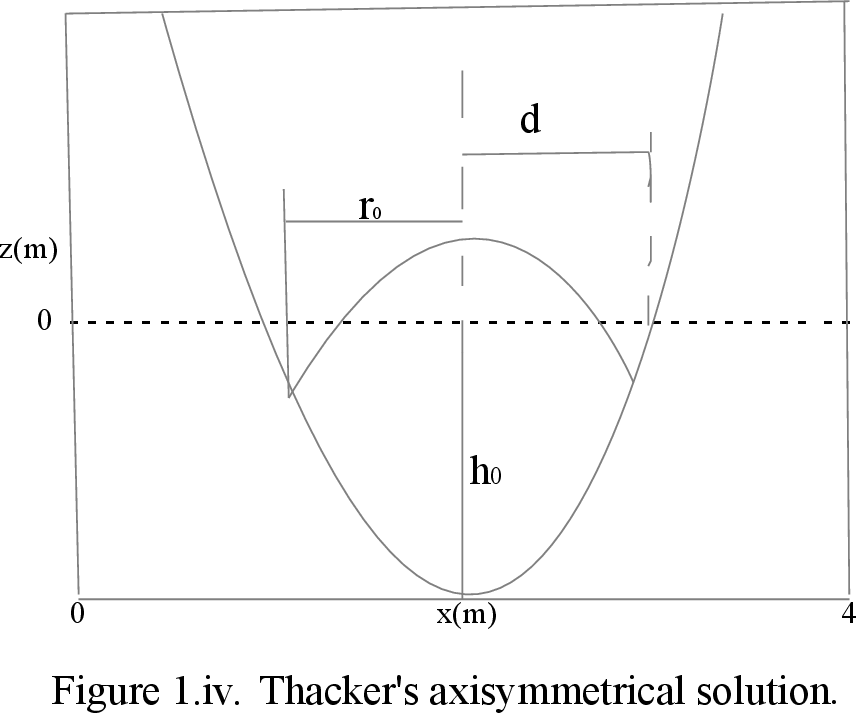,width=7cm}
         \end{tabular}
        \end{center}
        \caption{Dam-break, Water depth and Geometric configuration.}
        \label{fig1}
        \end{figure}

   This study considers a typical example of two-dimensional hydrodynamic model observed in the far north region of Cameroon. This region is faced to floods which have started in the second half of July $2024$. Due to the intensification and recurrence of heavy rains, series of floods are recorded in several localities in the region which have attained critical levels during the period from $11$ August-$25$ September $2024$, in the Mayo-Danay and Logone-et-Chari divisions (see Figure $\ref{fig2}$). The peak of the inundations was observed in $19$ September in which at least $67,323$ households (more than $365,060$ people including an estimated $124,120$ children under $5$ years old, have been severely affected by the floods and more than $30$ individuals died) \cite{2unicef}. In addition, approximately $82,509$ hectares of agriculture land are destroyed while some $5,278$ heads of livestock are lost \cite{unicef}. Regarding the health and education, $65$ health facilities across 15 out of the $19$ hospitals are impacted by the floods to various localities in region whereas $262$ schools are damaged, cut off or destroyed by inundations, affecting $103,906$ students and around $1,418$ teachers in the Logone-et-Chari and Mayo-Danay divisions \cite{4unicef,5unicef}. Furthermore, the floods also increased the risk to catch waterborne diseases such as malaria and cholera since the primary source of drinking water are either damaged, submerged or inaccessible. Latrines are also flooded and/or inaccessible to households \cite{unicef}. Although on $19$ September $2024$, the Cameroon government announced that the head of state has increased its support to victims to $1.9$ billion francs CFA (equivalent USD $\$3.1$ million), a broad range of persons who lost their homes are hosted by several families in the nearby locations and other are still sleeping in public buildings or out in the open, near their destroyed or damaged shelters. Additionally, $26,159$ displaced people including $5,633$ children, $658$ breastfeeding and $5413$ pregnant women, across eight sides are identified in the Yagoua(5), Maga(2) and Vele(1) sub-divisions in the Mayo-Danay division. New estimations indicate that $463,563$ persons will be affected by floods in the far north region in $2024$ \cite{6unicef,unicef}.\\

           \begin{figure}
         \begin{center}
         \begin{tabular}{c c}
         \psfig{file=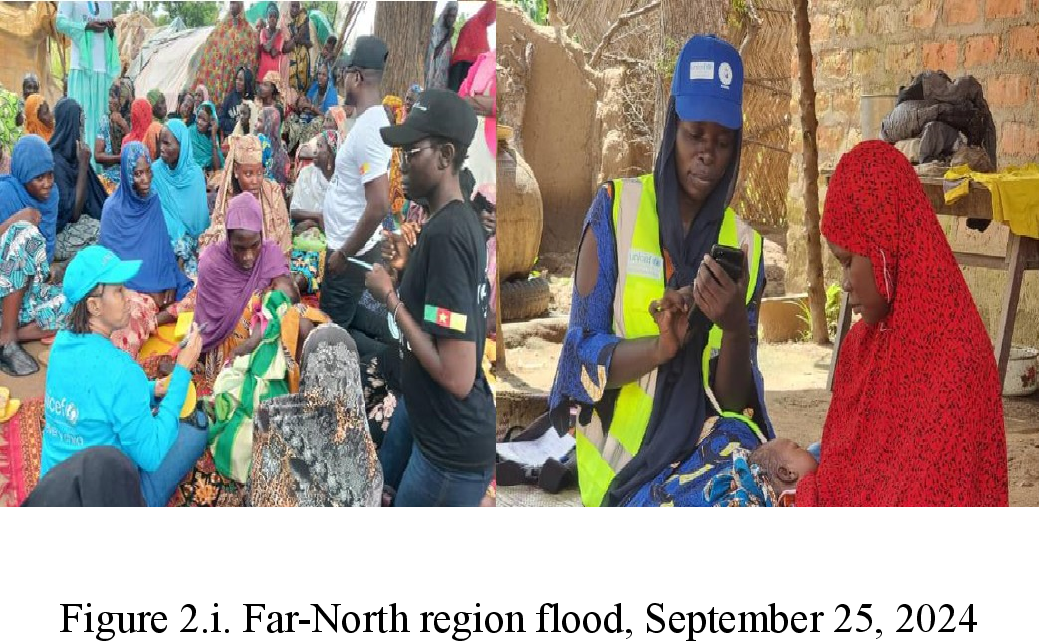,width=7cm} & \psfig{file=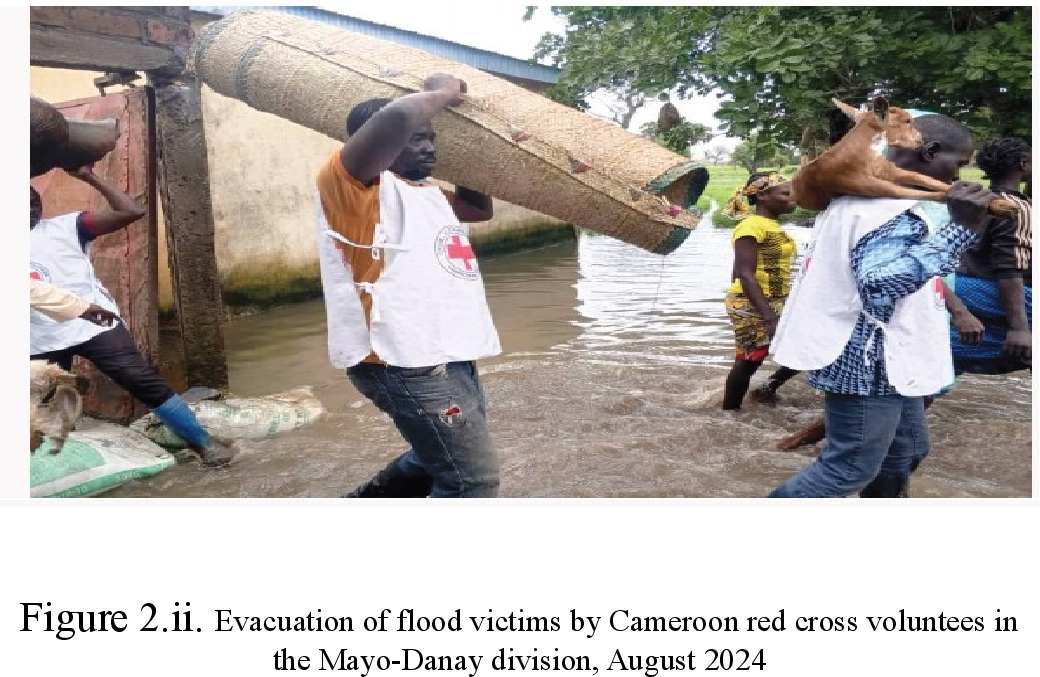,width=7cm}\\
         \psfig{file=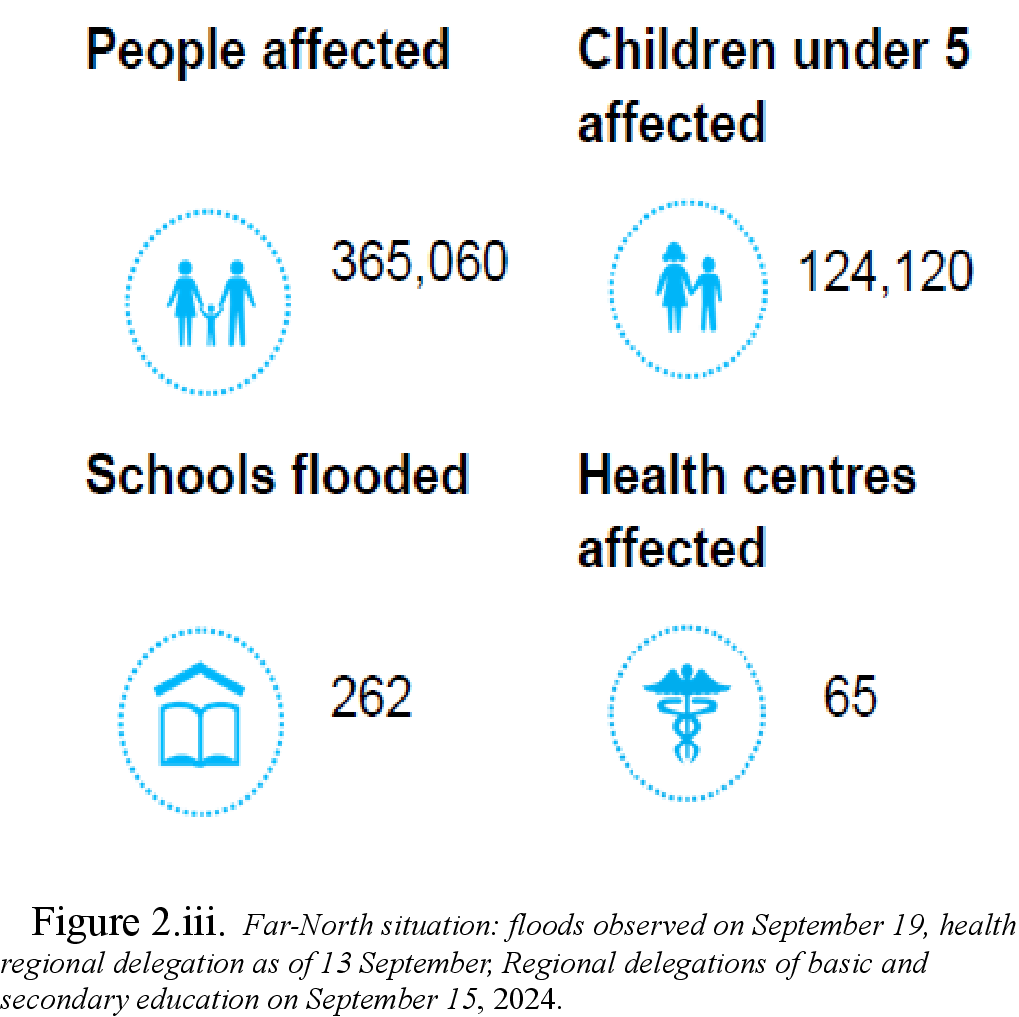,width=7cm} & \psfig{file=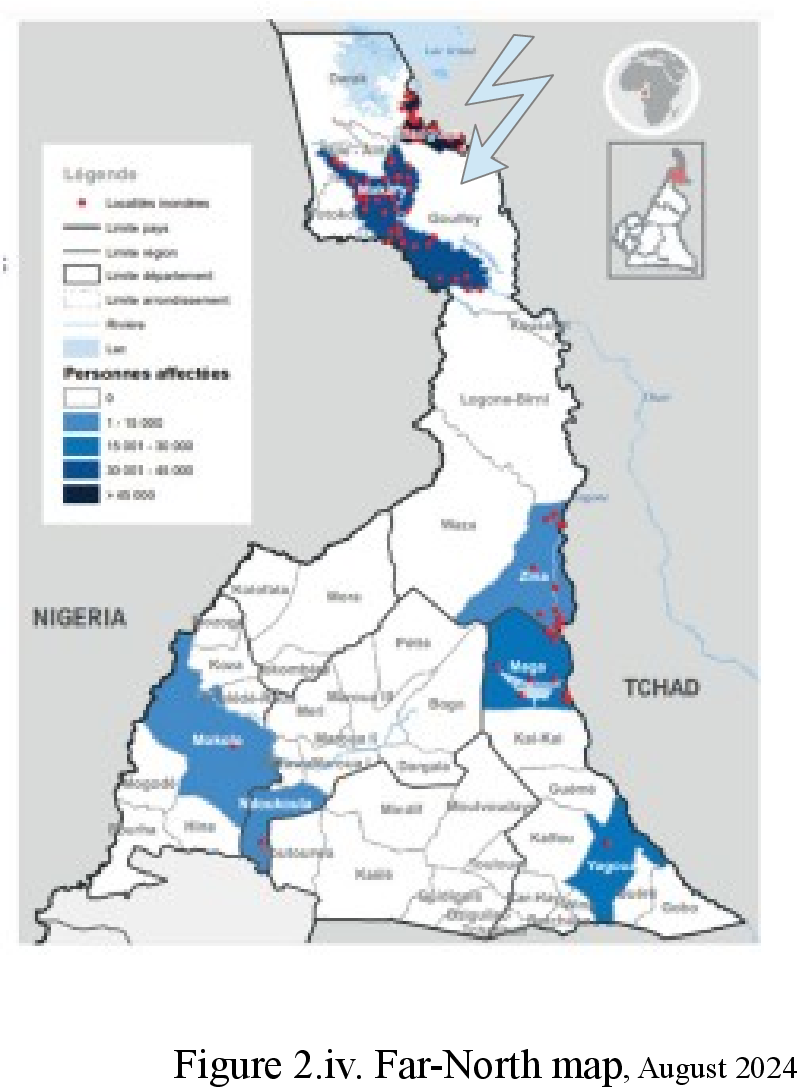,width=6cm}\\
         \psfig{file=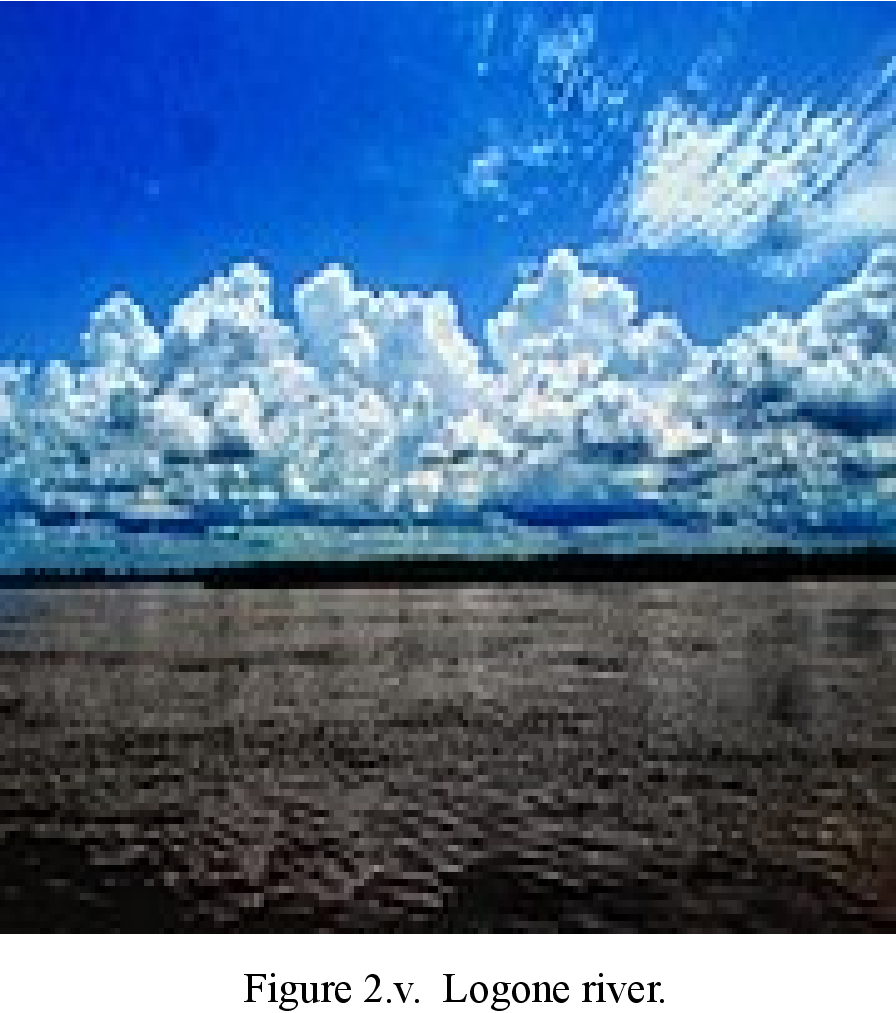,width=7cm} & \psfig{file=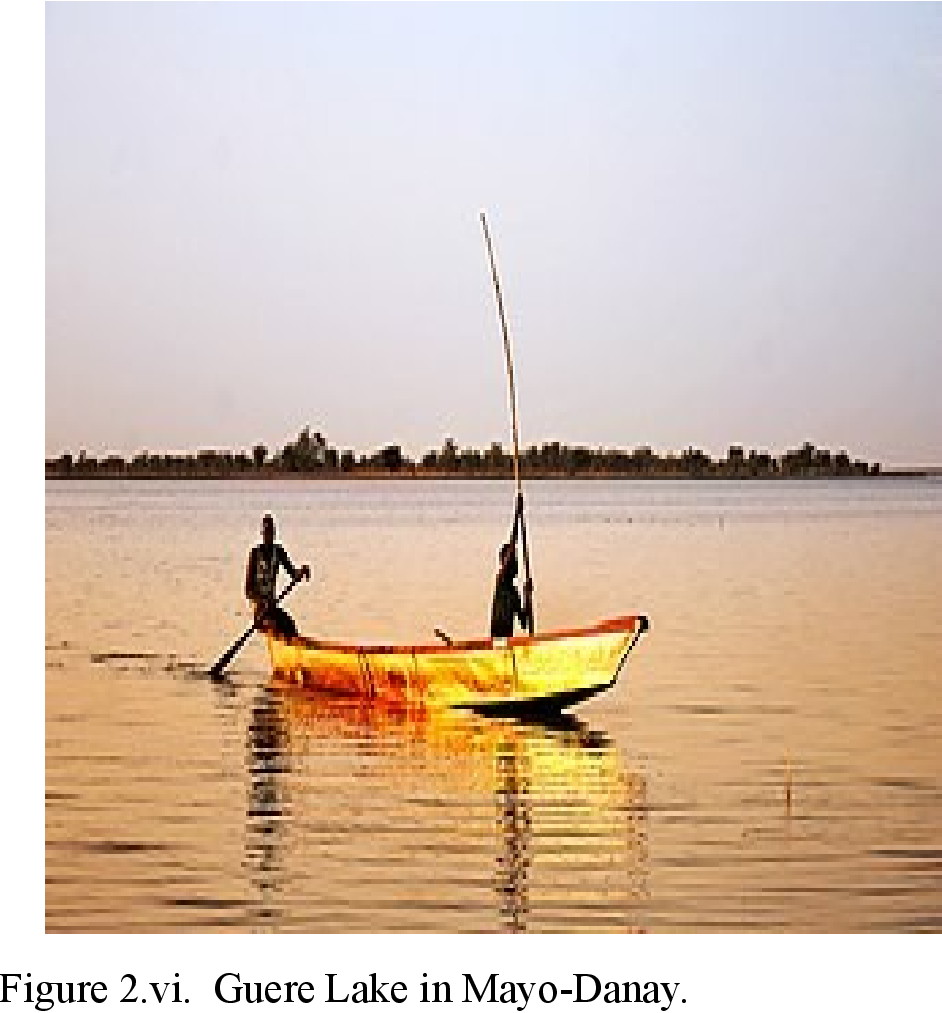,width=7cm}
         \end{tabular}
        \end{center}
        \caption{Situation of floods, Logone river and Guere lake in Mayo-Danay.}
        \label{fig2}
        \end{figure}

   The theoretical results developed in this paper deal with the two-dimensional hydrodynamic overland flow \cite{fr} which may be obtained from the Navier-Stokes equations by integrating over the depth using kinematic boundary conditions and making some assumptions such as: velocity is constant with depth, horizontal shear-stress together with the vertical velocity and acceleration are small and the pressure distribution is hydrostatic \cite{2fr,3fr}.
  \begin{equation}\label{1}
        \left\{
          \begin{array}{ll}
            \frac{\partial h}{\partial t}+\frac{\partial (hu)}{\partial x}+\frac{\partial (hv)}{\partial y}=0, & \hbox{}\\
            \text{\,}\\
            \frac{\partial (hu)}{\partial t}+\frac{\partial (hu^{2})}{\partial x}+\frac{\partial (huv)}{\partial y}=gh(S_{0_{x}}-\frac{\partial h}{\partial x}-S_{f_{x}}),&\hbox{}\\
            \text{\,}\\
            \frac{\partial (hv)}{\partial t}+\frac{\partial (huv)}{\partial x}+\frac{\partial (hv^{2})}{\partial y}=gh(S_{0_{y}}-\frac{\partial h}{\partial y}-S_{f_{y}}),&
            \hbox{}
          \end{array} \text{\,\,\,on\,\,\,} \Omega\times(0,\text{\,}T]
        \right.
      \end{equation}
      subjects to initial condition
      \begin{equation}\label{2}
       h(x,y,0)=\rho_{1}(x,y),\text{\,\,\,}u(x,y,0)=\rho_{2}(x,y),\text{\,\,\,}v(x,y,0)=\rho_{3}(x,y),\text{\,\,\,on\,\,\,}\overline{\Omega}=\Omega\cup\partial\Omega
      \end{equation}
      and boundary condition
      \begin{equation}\label{3b}
        h(x,y,t)=f_{1}(x,y,t),\text{\,\,\,}u(x,y,t)=f_{2}(x,y,t),\text{\,\,\,}v(x,y,t)=f_{3}(x,y,t),\text{\,\,\,for\,\,\,}(x,y,t)\in\partial\Omega\times[0,\text{\,}T].
      \end{equation}
        Here, $\Omega\subset \mathbb{R}^{2}$ is a boundary domain, $\partial\Omega$ denotes its boundary, $\frac{\partial}{\partial s}$ means the partial derivative with respect to the variable $s$ where $s\in\{t,x,y\}$, $h$ is the water depth, $u$ and $v$ are depth average velocities in the $x$-direction and $y$-direction, respectively, $S_{0_{x}}$ and $S_{0_{y}}$ denote the bed slops in the $x$-direction and $y$-direction, respectively, $S_{f_{x}}$ and $S_{f_{y}}$ represent the bottom frictions in the $x$-direction and $y$-direction, respectively, while $g$ means the gravitational acceleration and $T$ represents the time interval length. It's worth mentioning that the bottom friction should be estimated using the Manning's formulas
       \begin{equation}\label{3aa}
       S_{f_{x}}=\frac{\overline{n}^{2}(u^{\frac{3}{2}}+uv^{\frac{1}{2}})}{c_{0}^{2}h^{\frac{4}{3}}}\text{\,\,\,and\,\,\,}S_{f_{y}}=\frac{\overline{n}^{2}(v^{\frac{3}{2}}
       +vu^{\frac{1}{2}})}{c_{0}^{2}h^{\frac{4}{3}}},
      \end{equation}
      where $\overline{n}$ designates the Manning's roughness coefficient and $c_{0}$ is a dimensional constant. The first equation in relation $(\ref{1})$ derives from the conservation of mass over a control volume while both second and third equations in $(\ref{1})$ result from conservation of momentum in the $x$-direction and $y$-direction, respectively. The terms $gh(S_{0_{x}}-\frac{\partial h}{\partial x}-S_{f_{x}})$ and $gh(S_{0_{y}}-\frac{\partial h}{\partial y}-S_{f_{y}})$ in the momentum equations denote various quantities associated with conservation of momentum. Set
      \begin{equation}\label{3bb}
        \phi=h[1,u,v]^{t},\text{\,\,}E(\phi)=h[u,u^{2}+\frac{1}{2}gh,uv]^{t},\text{\,\,}F(\phi)=h[v,uv,v^{2}+\frac{1}{2}gh]^{t}\text{\,\,and\,\,}
        G(\phi)=gh[0, S_{0_{x}}-S_{f_{x}},S_{0_{y}}-S_{f_{y}}]^{t},
      \end{equation}
      where $w^{t}$ denotes the transpose of a vector $w\in\mathbb{R}^{3}$. The system of equations $(\ref{1})$ can be expressed in conservative form as
      \begin{equation}\label{4}
        \frac{\partial \phi}{\partial t}+\frac{\partial E(\phi)}{\partial x}+\frac{\partial F(\phi)}{\partial y}=G(\phi).
      \end{equation}
      In this new formulation, $\phi$ is a vector dealing with primitive variables: $h$, $u$ and $v$; $E(\phi)$ and $F(\phi)$ are two vectors written in flux form whereas $G(\phi)$ is the source term. However, the system of equations $(\ref{4})$ lie in the class of complex nonlinear partial/ordinary differential equations (PDEs/ODEs) that do not possess analytical solutions \cite{3en,4en,5en,6en,7en,8en,9en}. To overcome this issue, a large set of numerical methods such as: explicit difference schemes, implicit difference methods, explicit/implicit difference formulations, finite element methods, finite volume procedures, time-split approaches including time-split MacCormack rapid solver. The interested readers can refer to the works discussed in \cite{8db,10en,15db,16db,2en,17db,18db,17en,19db,20db,13en,91yzw,33yzw,14en,
      42yzw,28yzw,16en} and references therein. In \cite{21db,1en}, the authors established that one suitable technique to find efficient solutions of unsteady flow model dealing with the presence of inherent dissipation, discontinuity and stability, such as the problem given by the system of equations $(\ref{4})$ is the MacCormack scheme. This approach has provided time-accurate solution for aeroacoustic problems and fluid flows. Solve the one-dimensional shock tube along with two-dimensional acoustic scatting models using this computational method provide good results compared to the analytical solutions. Additionally, the MacCormack procedure is less time consuming and easy to implement than a wide set of numerical schemes mentioned above and should be suitable to give reliable results when applied to nonlinear unsteady flow models including the dam-break problems in the presence of discontinuity and strict gradient conditions \cite{21db,23db}. To construct the time-split MacCormack version, some authors \cite{23db} have modified the computational scheme analyzed in \cite{21db} into an implicit prefactorization method by splitting the derivative operators of a central compact scheme into one-side forward and backward difference operators. The prefactorization technique considers an implicit matrix that is decomposed into two independent upper and lower triangular matrices easier to convert. Hence, the one-side nature of the time-split MacCormack approach plays a crucial role on its efficiency especially when severe gradients are present \cite{23db,11en,26db,15en}. Though this scheme is less accurate than the more recent methods such as the one developed in this work, it is commonly used for engineering problems due to its greater simplicity. Furthermore, a major class of efficient numerical methods are based on explicit-implicit schemes and time-splitting methods, also called fractional techniques \cite{5yzw,12en,9yzw}. Most accurate and efficient time-split schemes are constructed according to either the physical components such as: velocity, pressure, density, energy of physical processes including convection, diffusion, reaction or dimension (for example, see the methods analyzed in \cite{10yzw,11yzw,12yzw}). Although the approaches may suffer from two disadvantages: boundary conditions corresponding to split equations and splitting error in the composite algorithms, high-order fractional steps procedures can be developed to reduce the splitting errors in the formulations discussed in \cite{10yzw,11yzw,27yzw,29yzw}, while intermediate boundary conditions should be obtained from the split equations as suggested in \cite{7yzw,8yzw}.\\

      However, high-order time-split methods are crucial tools in the integration of nonlinear systems that possess homoclinic orbits in the geometry of their phase space which cause a big challenge in the numerical integration \cite{13yzw,19yzw}. This paper develops a spatial fourth-order and temporal second-order symmetric time-split explicit/implicit computational approach for simulating the two-dimensional initial-boundary value problem $(\ref{1})$-$(\ref{3b})$. The new technique is constructed by exploiting the splitting formulas studied in \cite{12yzw} together with the idea used by MacCormack to develop the time-split rapid solver. The new symmetric fractional steps explicit/implicit technique is composed of three stages. In the first step, the one-dimensional difference operator in the $x$-direction computes explicitly while in the second stage, the one-dimensional difference operator in the $y$-direction calculates implicitly. Finally, in the third step the one-dimensional difference operator in the $x$-direction computes explicitly. This procedure takes advantage to be less time consuming and to avoid the severe time steps restriction for stability along with the inversion of block matrices while preserving the second-order accurate in time. Indeed, the errors increased at the first and third stages are balanced by the ones decreased at the second step, so that the stability might be maintained since the proposed computational scheme is symmetric and the sum of the time-steps of each difference operators in the composition is equal. In addition, the constructed time-split explicit/implicit method is efficient and more faster than a broad range of numerical schemes \cite{9yzw,26db,8db,19db}, analyzed in the literature for solving the two-dimensional hydrodynamic problem $(\ref{1})$ with appropriate initial and boundary conditions $(\ref{2})$-$(\ref{3b})$.\\

      The remainder of the paper is organized as follows. Section $\ref{sec2}$ deals with a detailed description of the symmetric time-split explicit/implicit approach in an approximate solution of the dam-break problem $(\ref{4})$, subjected to suitable initial condition $(\ref{2})$ and boundary one $(\ref{3b})$. The Courant-Friedrichs-Lewy (CFL) requirement for necessary condition of stability of explicit numerical methods applied to linear hyperbolic PDEs together with a deep analysis of the time step restriction for stability of the proposed time-split explicit/implicit technique are provided in Section $\ref{sec3}$. In Section $\ref{sec4}$, a large set of numerical experiments are carried out and discussed to confirm the theoretical studies. Furthermore, the numerical tests also consider the floods observed in the far north region of Cameroon from the second half in July up to the second half in October $2024$. Finally, the general conclusions and our future works are presented in Section $\ref{sec5}$.

     \section{Construction of the new approach}\label{sec2}
     In this section, we develop a linearized time-split explicit/implicit technique for solving the two-dimensional shallow water equations $(\ref{1})$ subjects to initial-boundary conditions $(\ref{2})$-$(\ref{3b})$.\\

     To construct the method, the two-dimensional time-dependent problem $(\ref{4})$ is converted into two one-dimensional subproblems by the use of locally-one dimensional (LOD) time-splitting procedure:
     \begin{equation}\label{5}
      \frac{\partial \phi}{\partial t}=-\frac{\partial E(\phi)}{\partial x},
      \end{equation}
      \begin{equation}\label{6}
       \frac{\partial \phi}{\partial t}=G(\phi)-\frac{\partial F(\phi)}{\partial y}.
      \end{equation}
      We introduce the one-dimensional explicit and implicit difference operators $\mathcal{P}_{1}(k_{x})$ and $\mathcal{P}_{2}(k_{y})$ associated with equations $(\ref{5})$ and $(\ref{6})$, respectively.
      \begin{equation}\label{7}
       \phi^{*}=\mathcal{P}_{1}(k_{x})\phi^{n},
      \end{equation}
      \begin{equation}\label{8}
       \phi^{**}=\mathcal{P}_{2}(k_{y})\phi^{*},
      \end{equation}
      where the asterisks "*" and "**" are symbols of convenience and they denote the intermediate time levels ($nk<t_{*}<t_{**}\leq(n+1)k$), $\mathcal{P}_{2}(k_{y})\phi^{*}=\frac{1}{2}[\overline{\mathcal{P}}_{2}(k_{y})\phi^{*}+\overline{\mathcal{P}}_{2}(k_{y})\overline{\phi}^{**}]$ and $\phi^{l}=\phi(t_{l})$. Here, $\overline{\mathcal{P}}_{2}(k_{y})$ represents an explicit difference scheme. Setting: $k_{x}=\frac{k}{2}$ and $k_{y}=k$, where $k$ designates the time step, the new time-split explicit/implicit will derive from the following formula
      \begin{equation}\label{9}
       \phi^{n+1}=\mathcal{P}(k)\phi^{n}=\mathcal{P}_{1}(k/2)\mathcal{P}_{2}(k)\mathcal{P}_{1}(k/2)\phi^{n},
      \end{equation}
       The motivation to take $k_{x}=\frac{k}{2}$, $k_{y}=k$ and to define the operator $\mathcal{P}(k)=\mathcal{P}_{1}(k/2)\circ\mathcal{P}_{2}(k)\circ\mathcal{P}_{1}(k/2)$, is to ensure the consistency and temporal second-order convergence of the developed technique, where "$\circ$" means the composite operator. Indeed, it's well known in the literature that a sequence of operators is consistent if the sum of the time-steps for each of the operators is equal and it is temporal second-order convergent when the sequence is symmetric.\\

      Let $N$, $M_{x}$ and $M_{y}$ be three positive integers. Set $k=\frac{T}{N}$ be the time step and $\Delta x$ and $\Delta y$, be the space steps in the $x$-direction and $y$-direction, respectively. The domain $\Omega$ is discretized into $(M_{x}+1)(M_{y}+1)$ non-overlapping and uniform quadrilaterals $T_{lp}$, $l=0,1,...,M_{x}$ and $p=0,1,...,M_{y}$, with center $(x_{l},y_{p})$ and dimensions $\Delta x\times\Delta y$. Thus, $T_{lp}$ can be represented as: $T_{lp}=[x_{l}-\frac{\Delta x}{2},\text{\,}x_{l}+\frac{\Delta x}{2}]\times[y_{p}-\frac{\Delta y}{2},\text{\,}y_{p}+\frac{\Delta y}{2}]$. Furthermore, it is not difficult to observe that $T_{lp}$ can be mapped into $[-1,\text{\,}1]^{2}$, using the following transformation
      \begin{equation*}
      \left.
          \begin{array}{ll}
            \psi:\text{\,\,}T_{lp}\rightarrow [-1,\text{\,}1]^{2} & \hbox{}\\
            (x,y)\mapsto 2\left(\frac{x-x_{l}}{\Delta x},\frac{y-y_{p}}{\Delta y}\right).&\hbox{}
          \end{array}
        \right.
      \end{equation*}
      Utilizing this fact, we assume that $(0,0)\in\Omega$. Set $x_{l}=l\Delta x$ and $y_{p}=p\Delta y$. In the following we use notation $w_{lp}^{n}=w(x_{l},y_{p},t_{n})$. Additionally, we denote both computed solution and the exact one of problem $(\ref{4})$ with initial and boundary conditions $(\ref{2})$ and $(\ref{3b})$, at the discrete point $(x_{l},y_{p},t_{n})$ by $\overline{\phi}_{lp}^{n}$ and $\phi_{lp}^{n}$, respectively, while $W=\{w_{lp}^{n}:\text{\,}0\leq n\leq N;
      \text{\,}0\leq l\leq M_{x};\text{\,}0\leq p\leq M_{y}\}$, represents the space of grid functions defined on $\Omega_{\Delta xy}\times\Omega_{k}$, where $\Omega_{k}=\{t_{n},\text{\,}n=0,1,...,N\}$ and $\Omega_{\Delta xy}=\Omega\cap\overline{\Omega}_{\Delta xy}$, with $\overline{\Omega}_{\Delta xy}=\{(x_{l},y_{p}):\text{\,}l=0,1,...,M_{x};\text{\,}p=0,1,...M_{y}\}$.\\

      We introduce the following centered difference operators of order two and order four along with the forward and backward difference operators of order three:
      \begin{equation*}
      \delta_{x}^{2}w_{lp}^{n}=\frac{w_{l+1,p}^{n}-w_{l-1,p}^{n}}{2\Delta x};\text{\,\,\,}\delta_{y}^{2}w_{lp}^{n}=\frac{w_{l,p+1}^{n}-w_{l,p-1}^{n}}{2\Delta y};\text{\,\,\,}
      \delta_{x}^{4}w_{lp}^{n}=\frac{1}{12\Delta x}[-w_{l+2,p}^{n}+8(w_{l+1,p}^{n}-w_{l-1,p}^{n})+w_{l-2,p}^{n}];\text{\,\,\,}
      \end{equation*}
      \begin{equation*}
      \delta_{y}^{4}w_{lp}^{n}=\frac{1}{12\Delta y}[-w_{l,p+2}^{n}+8(w_{l,p+1}^{n}-w_{l,p-1}^{n})+w_{l,p-2}^{n}];\text{\,\,\,}\delta_{x}^{3+}w_{lp}^{n}=\frac{1}{6\Delta x}[-w_{l+2,p}^{n}+6w_{l+1,p}^{n}-3w_{l,p}^{n}-2w_{l-1,p}^{n}];
      \end{equation*}
      \begin{equation*}
      \delta_{y}^{3+}w_{lp}^{n}=\frac{1}{6\Delta y}[-w_{l,p+2}^{n}+6w_{l,p+1}^{n}-3w_{l,p}^{n}-2w_{l,p-1}^{n}];\text{\,\,\,}\delta_{x}^{3-}w_{lp}^{n}=\frac{1}{6\Delta x}[2w_{l+1,p}^{n}+3w_{l,p}^{n}-6w_{l-1,p}^{n}+w_{l-2,p}^{n}];
      \end{equation*}
      \begin{equation}\label{9a}
      \delta_{y}^{3-}w_{lp}^{n}=\frac{1}{6\Delta y}[2w_{l,p+1}^{n}+3w_{l,p}^{n}-6w_{l,p-1}^{n}+w_{l,p-2}^{n}],
      \end{equation}
      for $l=2,3,...,M_{x}-2$ and $p=2,3,...,M_{y}-2$.
      \begin{remark}
      It's worth noticing to mention that the linear operators $\delta_{z}^{2}$, $\delta_{z}^{3+}$, $\delta_{z}^{3-}$ and $\delta_{z}^{4}$, satisfy
      \begin{equation}\label{10aa}
      \delta_{z}^{2}(w\delta_{z}^{3\mp})\psi_{lp}=\frac{1}{2\Delta z}[w_{l+1,p}\delta_{z}^{3-}\psi_{l+1,p}-w_{l-1,p}\delta_{z}^{3+}\psi_{l-1,p}]\text{\,\,\,and\,\,\,}\delta_{z}^{4}\psi_{lp}=\frac{1}{2}[\delta_{z}^{3+}\psi_{lp}+\delta_{z}^{3-}
      \psi_{lp}],
      \end{equation}
      where $z=x,y$ and for every $w,\psi\in L^{2}(\Omega)$.
      \end{remark}
      The spaces $L^{2}(\Omega)$, $\mathbb{R}^{M_{x}-3}$, $\mathbb{R}^{M_{y}-3}$, and $\left(\mathbb{R}^{M_{x}-3}\right)^{3}$, are equipped with the following scalar products and norms
      \begin{equation}\label{10}
      \left(w,q\right)_{0}=\Delta_{x}\Delta_{y}\underset{l=2}{\overset{M_{x}-2}\sum}\underset{p=2}{\overset{M_{y}-2}\sum}w_{lp}q_{lp},\text{\,\,\,}\|w\|_{0}=\sqrt{\left(w,w\right)_{0}},
      \text{\,\,\,\,\,\,\,}\forall w,p\in L^{2}(\Omega),
      \end{equation}
      \begin{equation}\label{11a}
      \left(d,z\right)_{\sim}=\underset{l=2}{\overset{M_{x}-2}\sum}d_{l}z_{l},\text{\,\,\,\,\,}\|z\|_{\sim}=\sqrt{\left(z,z\right)_{\simeq}},\text{\,\,\,\,\,}
      \left(\overline{d},\overline{z}\right)_{\simeq}=\underset{l=2}{\overset{M_{y}-2}\sum}
      \overline{d}_{l}\overline{z}_{l},\text{\,\,\,\,\,\,}\|\overline{z}\|_{\sim}=\sqrt{\left(\overline{z},\overline{z}\right)_{\simeq}}.
      \end{equation}
       for every $d,z\in \mathbb{R}^{M_{x}-3},$ and all $\overline{d},\overline{z}\in \mathbb{R}^{M_{y}-3}$.
       \begin{equation}\label{10a}
      \||W|\|_{0,\sim}=(\|w_{1}\|_{0}^{2}+\|w_{2}\|_{0}^{2}+\|w_{3}\|_{0}^{2})^{\frac{1}{2}},\text{\,\,\,\,\,\,}\forall W=(w_{1},w_{2},w_{3})\in \left(\mathbb{R}^{M_{x}-3}\right)^{3}.
      \end{equation}
       The matrix norm, $\||\cdot|\|$, associated with the norm, $\|\cdot\|_{\sim}$, is defined as
       \begin{equation}\label{12a}
        \||A|\|=\underset{0\neq z\in\mathbb{R}^{M_{x}-3}}{\max}\frac{\|Az\|_{\sim}}{\|z\|_{\sim}},\text{\,\,\,\,\,\,}\forall A\in \mathcal{M}_{M_{x}-3}(\mathbb{R}),
      \end{equation}
      where $\mathcal{M}_{M_{x}-3}(\mathbb{R})$ denotes the vector space of $(M_{x}-3)\times(M_{x}-3)$ matrices. Additionally, the Sobolev space $L^{\infty}(0,T;L^{2})$ is endowed with the norm
      \begin{equation}\label{11}
      \||\overline{w}|\|_{0,\infty}=\underset{0\leq n\leq N}{\max}\|\overline{w}^{n}\|_{0},\text{\,\,\,\,\,\,}\forall \overline{w}\in L^{\infty}(0,T;L^{2}).
      \end{equation}
      The following Lemma plays a crucial role in the development of the new algorithm.

      \begin{lemma}\label{l1}
      Suppose $w\in \mathcal{H}^{5}(\Omega)$, space of functions having continuous partial derivatives up to order four. Using relation $(\ref{9a})$, the following approximations are satisfied
      \begin{equation}\label{11aa}
      \frac{\partial w}{\partial z}=\delta^{2}_{z}w+O(\Delta z^{2}),\text{\,\,\,}\frac{\partial w}{\partial z}=\delta^{3+}_{z}w+O(\Delta z^{3}),\text{\,\,\,}
      \frac{\partial w}{\partial z}=\delta^{3-}_{z}w+O(\Delta z^{3}),\text{\,\,\,}\frac{\partial w}{\partial z}=\delta^{4}_{z}w+O(\Delta z^{4}),
      \end{equation}
      where $z=x,y$.

      \begin{proof}
      The proof of the first and last approximations can be found in \cite{5en}. Expanding the Taylor series for $w$ with space step $\Delta z$ at the discrete point $(x_{l},y_{p})$, up to order four, using both forward and backward difference formulas and performing direct calculations to get the second and third equations.
      \end{proof}
      \end{lemma}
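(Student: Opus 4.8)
The plan is to establish all four approximations in \pref{11aa} by a direct Taylor expansion of $w$ about the grid point $(x_l,y_p)$ and then to verify, operator by operator, that the coefficients of the low-order terms cancel so that only the first derivative survives at the claimed order. Since the first and fourth formulas are the classical second- and fourth-order central approximations, for which I would simply cite \cite{5en}, I would concentrate on the forward and backward third-order operators $\delta_z^{3+}$ and $\delta_z^{3-}$ defined in \pref{9a}; the argument for the central operators has the same structure but is shorter, because their symmetric stencils automatically annihilate the even-order derivative terms.

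First I would expand each nodal value occurring in the stencil. For the forward operator $\delta_z^{3+}$ the stencil involves $w_{l-1,p}$, $w_{l,p}$, $w_{l+1,p}$ and $w_{l+2,p}$ (I take $z=x$; the case $z=y$ is verbatim). Writing $w^{(j)}$ for $\partial^{j} w/\partial z^{j}$ evaluated at $(x_l,y_p)$, Taylor's theorem with remainder gives
\[
w_{l+m,p} = \sum_{j=0}^{4} \frac{(m\Delta z)^{j}}{j!}\, w^{(j)} + O(\Delta z^{5}), \qquad m \in \{-1,\,1,\,2\},
\]
which is legitimate under the assumed smoothness of $w$, the hypothesis $w\in\mathcal{H}^{5}$ guaranteeing boundedness of the derivatives controlling the remainder.

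Next I would substitute these expansions into the combination $-w_{l+2,p}+6w_{l+1,p}-3w_{l,p}-2w_{l-1,p}$ defining $6\Delta z\,\delta_z^{3+}w_{lp}$ and collect powers of $\Delta z$. The crux is that the coefficients of $w^{(0)}$, $w^{(2)}$ and $w^{(3)}$ all vanish while the coefficient of $w^{(1)}$ equals $6\Delta z$; after dividing by $6\Delta z$ this yields $\delta_z^{3+}w = w^{(1)} - \tfrac{1}{12}\Delta z^{3}w^{(4)} + O(\Delta z^{4})$, which is the second identity of \pref{11aa}. The backward operator $\delta_z^{3-}$ is handled by the same substitution applied to $2w_{l+1,p}+3w_{l,p}-6w_{l-1,p}+w_{l-2,p}$; the identical three cancellations occur and the leading error term becomes $+\tfrac{1}{12}\Delta z^{3}w^{(4)}$, which is the third identity. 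This reversal of sign in the leading error is exactly what forces $\tfrac{1}{2}(\delta_z^{3+}+\delta_z^{3-})$ to be fourth-order accurate, consistent with the averaging identity recorded in \pref{10aa}.

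I do not expect a genuine obstacle beyond careful bookkeeping: the one thing that must be checked exactly is the simultaneous vanishing of the three coefficients of $w^{(0)}$, $w^{(2)}$ and $w^{(3)}$, since an error in any one of them would degrade the stated order. I would therefore exhibit these three cancellations explicitly as the heart of the verification, and remark that the smoothness assumption $w\in\mathcal{H}^{5}$ is precisely what is needed to bound the $O(\Delta z^{5})$ per-node remainders and hence justify the $O(\Delta z^{3})$ (respectively $O(\Delta z^{2})$ and $O(\Delta z^{4})$) symbols appearing in \pref{11aa} uniformly in the stencil.
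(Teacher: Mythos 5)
Your proposal is correct and takes essentially the same route as the paper's own (much terser) proof: the paper likewise cites \cite{5en} for the two centered formulas and disposes of $\delta_z^{3+}$ and $\delta_z^{3-}$ by a fourth-order Taylor expansion at $(x_l,y_p)$ followed by direct calculation. Your explicit verification that the coefficients of $w^{(0)}$, $w^{(2)}$, $w^{(3)}$ vanish and that the leading errors are $\mp\frac{1}{12}\Delta z^{3}w^{(4)}$ (whose cancellation upon averaging recovers \pref{10aa}) is exactly the bookkeeping the paper leaves to the reader, and it checks out.
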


      Now, the application of the Taylor series for the function $\phi$ with time step $\frac{k}{2}$ at the discrete point $(x_{l},y_{p},t_{n})$ using forward difference formula provides
      \begin{equation*}
      \phi^{*}_{lp}=\phi^{n}_{lp}+\frac{k}{2}\phi^{n}_{t,lp}+\frac{k^{2}}{8}\phi^{n}_{2t,lp}+O(k^{3}).
      \end{equation*}
      Utilizing equation $(\ref{5})$, this becomes
      \begin{equation}\label{12}
     \phi^{*}_{lp}=\phi^{n}_{lp}-\frac{k}{2}E(\phi)_{x,lp}^{n}+\frac{k^{2}}{8}\phi^{n}_{2t,lp}+O(k^{3}),
      \end{equation}
      and
      \begin{equation*}
      \phi^{n}_{2t,lp}=\frac{\partial}{\partial t}(\phi_{t})^{n}_{lp}=-\frac{\partial}{\partial t}(E(\phi)_{x})^{n}_{lp}=-\frac{\partial}{\partial x}(E(\phi)_{t})^{n}_{lp}=
      -\frac{\partial}{\partial x}(\nabla_{\phi}E(\phi)\cdot\phi_{t})^{n}_{lp}=\frac{\partial}{\partial x}\left(\nabla_{\phi}E(\phi)\cdot E(\phi)_{x}\right)^{n}_{lp},
      \end{equation*}
      where $\nabla_{\phi}E(\phi)$ designates the Jacobian matrix of the vector $E(\phi)$ (which is a square matrix) and $\nabla_{\phi}Q\cdot w$ means the matrix-vector multiplication. Substituting this into equation $(\ref{12})$ to get
      \begin{equation}\label{13}
     \phi^{*}_{lp}=\phi^{n}_{lp}-\frac{k}{2}E(\phi)_{x,lp}^{n}+\frac{k^{2}}{8}\left(\nabla_{\phi}E(\phi)\cdot E(\phi)_{x}\right)^{n}_{lp}+O(k^{3}),
      \end{equation}
      But the approximation of the terms $E(\phi)_{x,lp}^{n}$ and $\left(\nabla_{\phi}E(\phi)\cdot E(\phi)_{x}\right)^{n}_{x,lp}$, utilizing Lemma $\ref{l1}$ together with the centered, forward and backward difference operators: $\delta^{4}_{x}$, $\delta^{2}_{x}$, $\delta^{3+}_{x}$ and  $\delta^{3-}_{x}$, defined in relation $(\ref{9a})$, gives
      \begin{equation}\label{14}
      E(\phi)_{x,lp}^{n}=\delta^{4}_{x}E(\phi)_{lp}^{n}+O(\Delta x^{4}),
      \end{equation}
      \begin{equation*}
      \left(\nabla_{\phi}E(\phi)\cdot E(\phi)_{x}\right)^{n}_{x,lp}+O(\Delta x^{2})=\frac{1}{2\Delta x}[\nabla_{\phi}E(\phi^{n}_{l+1,p})\cdot E(\phi^{n}_{x,l+1,p})-
      \nabla_{\phi}E(\phi^{n}_{l-1,p})\cdot E(\phi)^{n}_{x,l-1,p}]+O(\Delta x^{2})
      \end{equation*}
      \begin{equation}\label{15}
      =\frac{1}{2\Delta x}[\nabla_{\phi}E(\phi^{n}_{l+1,p})\cdot\delta^{3-}_{x}E(\phi^{n}_{l+1,p})-\nabla_{\phi}E(\phi^{n}_{l-1,p})\cdot \delta^{3+}_{x}E(\phi)^{n}_{l-1,p}]
      +O(\Delta x^{2}).
      \end{equation}
      It's important to remind that the use of backward and forward differences ($\delta^{3-}_{x}$ and $\delta^{3+}_{x}$, respectively) in the last equality eliminate any bias due to the one-sided differencing. Substituting approximations $(\ref{14})$ and $(\ref{15})$ into equation $(\ref{13})$ and rearranging terms result in
      \begin{equation*}
     \phi^{*}_{lp}=\phi^{n}_{lp}-\frac{k}{2}\delta^{4}_{x}E(\phi)_{lp}^{n}+\frac{k^{2}}{16\Delta x}\left(\nabla_{\phi}E(\phi^{n}_{l+1,p})\cdot\delta^{3-}_{x}E(\phi^{n}_{l+1,p})-
     \nabla_{\phi}E(\phi^{n}_{l-1,p})\cdot \delta^{3+}_{x}E(\phi)^{n}_{l-1,p}\right)
      \end{equation*}
      \begin{equation}\label{18}
     +O(k^{3}+k^{2}\Delta x^{2}+k\Delta x^{4})=\phi^{n}_{lp}-\frac{k}{2}\delta^{4}_{x}E(\phi)^{n}_{lp}+\frac{k^{2}}{8}\delta^{2}_{x}\left(
     \nabla_{\phi}E(\phi)\cdot\delta^{3\mp}_{x}E(\phi)\right)^{n}_{lp}+O(k^{3}+k^{2}\Delta x^{2}+k\Delta x^{4}),
      \end{equation}
      where $\delta^{2}_{x}\left(w\delta^{3\mp}_{x}q\right)^{n}_{lp}$ is defined in equation $(\ref{10aa})$. Tracking the error term $O(k^{3}+k^{2}\Delta x^{2}+k\Delta x^{4})$ and replacing the exact solution $\phi=(h,hu,hv)^{t}$ with the approximate one $\overline{\phi}=(\overline{h},\overline{h}\overline{u},\overline{h}\overline{v})^{t}$, this provides the expression of the nonlinear difference operator $\mathcal{P}_{1}(k/2)$, that is,
       \begin{equation}\label{19}
     \overline{\phi}^{*}_{lp}=\mathcal{P}_{1}(k/2)\overline{\phi}^{n}_{lp},\text{\,\,\,for\,\,\,}l=2,3,...,M_{x}-2,\text{\,\,\,}p=0,1,...,M_{y},
      \end{equation}
      where $\mathcal{P}_{1}(k/2)$ is defined as
      \begin{equation}\label{20}
      \mathcal{P}_{1}(k/2)=\mathcal{I}-\frac{k}{2}\delta^{4}_{x}E(\cdot)+\frac{k^{2}}{8}\delta^{2}_{x}(\nabla_{\phi}E(\cdot)\cdot\delta^{3\mp}_{x}E(\cdot)).
      \end{equation}
      Here, $\mathcal{I}$ represents the identity operator and the vectors: $\overline{\phi}^{n}_{lp}$, $E(\overline{\phi})^{n}_{lp}$ and matrix $\nabla_{\phi}E(\overline{\phi})^{n}_{lp}$ are given by
      \begin{equation}\label{21}
      \overline{\phi}^{n}_{lp}=\begin{bmatrix}
                                 \overline{h}^{n}_{lp} \\
                                 \text{\,}\\
                                 \overline{h}^{n}_{lp}\overline{u}^{n}_{lp} \\
                                 \text{\,}\\
                                 \overline{h}^{n}_{lp}\overline{v}^{n}_{lp} \\
                               \end{bmatrix},\text{\,\,\,}E(\overline{\phi})^{n}_{lp}=\begin{bmatrix}
                                 \overline{h}^{n}_{lp}\overline{u}^{n}_{lp} \\
                                 \text{\,}\\
                                 \overline{h}^{n}_{lp}\overline{u}^{2,n}_{lp}+\frac{1}{2}g\overline{h}^{2,n}_{lp} \\
                                 \text{\,}\\
                                 \overline{h}^{n}_{lp}\overline{u}^{n}_{lp}\overline{v}^{n}_{lp}\\
                               \end{bmatrix},\text{\,\,\,}\nabla_{\phi}E(\overline{\phi})^{n}_{lp}=\begin{bmatrix}
                                                                                                     \overline{u}^{n}_{lp} & \overline{h}^{n}_{lp} & 0 \\
                                                                                                     \text{\,}\\
                                                                                                     \overline{u}^{2,n}_{lp}+g\overline{h}^{n}_{lp} & 2\overline{h}^{n}_{lp}\overline{u}^{n}_{lp} & 0 \\
                                                                                                     \text{\,}\\
                                                                                                     \overline{u}^{n}_{lp}\overline{v}^{n}_{lp} & \overline{h}^{n}_{lp}\overline{v}^{n}_{lp} & \overline{h}^{n}_{lp}\overline{u}^{n}_{lp} \\
                                                                                                   \end{bmatrix}.
      \end{equation}
      Plugging equations $(\ref{18})$ and $(\ref{20})$, it is easy to see that
      \begin{equation}\label{22}
      \frac{1}{k}(\overline{\phi}^{*}_{lp}-\overline{\phi}^{n}_{lp})=\frac{1}{k}[\mathcal{P}_{1}(k/2)-\mathcal{I}]\overline{\phi}^{n}_{lp}+O(k^{2}+k\Delta x^{2}+\Delta x^{4}).
      \end{equation}
      Since $k\Delta x^{2}\leq\frac{1}{2}(k^{2}+\Delta x^{4})$, this fact together with approximation $(\ref{22})$ suggest that the difference scheme $(\ref{19})$ should converge with order $O(k^{2}+\Delta x^{4})$, whenever it is stable.\\

      Now, we should construct implicit formulation defined by the nonlinear difference operator $\mathcal{P}_{2}(k)$. Expanding the Taylor series for the function $\phi$ with step size $k$ at the discrete point $(x_{l},y_{p},t_{n})$ utilizing both forward and backward difference schemes yields
      \begin{equation*}
      \phi^{**}_{lp}=\phi^{*}_{lp}+k\phi^{*}_{t,lp}+\frac{k^{2}}{2}\phi^{*}_{2t,lp}+O(k^{3}),
      \end{equation*}
      \begin{equation*}
     \phi^{*}_{lp}=\phi^{**}_{lp}-k\phi_{t,lp}^{**}+\frac{k^{2}}{2}\phi^{**}_{2t,lp}+O(k^{3}),
      \end{equation*}
      where $t_{n}<t_{*}<t_{**}\leq t_{n+1}$. Subtracting the second equation from the first one and rearranging terms result in
      \begin{equation}\label{23}
      2(\phi^{**}_{lp}-\phi^{*}_{lp})=k(\phi_{t,lp}^{**}+\phi_{t,lp}^{*})+\frac{k^{2}}{2}(\phi^{*}_{2t,lp}-\phi_{2t,lp}^{**})+O(k^{3}).
      \end{equation}
      By the use of Mean-value theorem, it holds
      \begin{equation}\label{24}
      \phi^{*}_{2t,lp}-\phi_{2t,lp}^{**}=(t_{*}-t_{**})\phi_{3t,lp}(t_{3*}),
      \end{equation}
      where $t_{n}<t_{*}\leq t_{3*}\leq t_{**}\leq t_{n+1}$, so $t_{*}-t_{**}=O(k)$. This fact combined with equations $(\ref{23})$-$(\ref{24})$ give
       \begin{equation*}
      2(\phi^{**}_{lp}-\phi^{*}_{lp})=k(\phi_{t,lp}^{**}+\phi_{t,lp}^{*})+O(k^{3}),
      \end{equation*}
      which is equivalent to
       \begin{equation*}
       \phi^{**}_{lp}=\phi^{*}_{lp}+\frac{k}{2}(\phi_{t,lp}^{**}+\phi_{t,lp}^{*})+O(k^{3}).
      \end{equation*}
      Using equation $(\ref{6})$, this becomes
       \begin{equation}\label{25}
       \phi^{**}_{lp}=\phi^{*}_{lp}+\frac{k}{2}[G(\phi)_{lp}^{**}+G(\phi)_{lp}^{*}-(F_{y}(\phi)_{lp}^{**}+F_{y}(\phi)_{lp}^{*})]+O(k^{3}).
      \end{equation}
      The spatial approximation of the terms $F_{y}(\phi)_{lp}^{*}$ and $F_{y}(\phi)_{lp}^{**}$, using Lemma $\ref{l1}$ together with the centered difference formulation, $\delta^{4}_{y}$, defined in relation $(\ref{9a})$ provides
       \begin{equation*}
       F_{y}(\phi)^{*}_{lp}=\delta^{4}_{y}F(\phi)^{*}_{lp}+O(\Delta y^{4})\text{\,\,\,and\,\,\,}F_{y}(\phi)^{**}_{lp}=\delta^{4}_{y}F(\phi)^{**}_{lp}+O(\Delta y^{4}).
      \end{equation*}
      Substituting these equations into approximation $(\ref{25})$ and rearranging terms to get
      \begin{equation}\label{26}
       \phi^{**}_{lp}=\phi^{*}_{lp}-\frac{k}{2}\delta^{4}_{y}[F(\phi)_{lp}^{**}+F(\phi)_{lp}^{*})]+\frac{k}{2}[G(\phi)_{lp}^{**}+G(\phi)_{lp}^{*}]+O(k^{3}+k\Delta y^{4}).
      \end{equation}
      Truncating the error term $O(k^{3}+k\Delta y^{4})$ and replacing the analytical solution $\phi$ with the computed one $\overline{\phi},$ to obtain
      \begin{equation}\label{27}
       \overline{\phi}^{**}_{lp}=\overline{\phi}^{*}_{lp}-\frac{k}{2}\delta^{4}_{y}[F(\overline{\phi})_{lp}^{**}+F(\overline{\phi})_{lp}^{*})]+
       \frac{k}{2}[G(\overline{\phi})_{lp}^{**}+G(\overline{\phi})_{lp}^{*}].
      \end{equation}
      Equation $(\ref{27})$ yields the implicit formulation satisfied by the difference operator $\overline{\mathcal{P}}_{2}(k)$, that is,
       \begin{equation}\label{28}
       2\overline{\phi}^{**}_{lp}=\overline{\mathcal{P}}_{2}(k)\overline{\phi}^{**}_{lp}+\overline{\mathcal{P}}_{2}(k)\overline{\phi}^{*}_{lp},
      \end{equation}
      for $l=0,1,2,...,M_{x}$, and $p=2,3,...,M_{y}-2$, where $\overline{\mathcal{P}}_{2}(k)$ is defined as
      \begin{equation}\label{29}
       \overline{\mathcal{P}}_{2}(k)\overline{\phi}^{\alpha}_{lp}=\overline{\phi}^{\alpha}_{lp}-\frac{k}{2}\delta^{4}_{y}F(\overline{\phi})_{lp}^{\alpha}+
       \frac{k}{2}G(\overline{\phi})_{lp}^{\alpha},
      \end{equation}
      with $\alpha=*,**$, and
       \begin{equation*}
      F(\overline{\phi})^{\alpha}_{lp}=\begin{bmatrix}
                                 \overline{h}^{\alpha}_{lp}\overline{v}^{\alpha}_{lp} \\
                                 \text{\,}\\
                                 \overline{h}^{\alpha}_{lp}\overline{u}^{\alpha}_{lp}\overline{v}^{\alpha}_{lp} \\
                                 \text{\,}\\
                                 \overline{h}^{\alpha}_{lp}\overline{v}^{2,\alpha}_{lp}+\frac{1}{2}g\overline{h}^{2,\alpha}_{lp} \\
                               \end{bmatrix},\text{\,\,\,\,\,\,}G(\overline{\phi})^{\alpha}_{lp}=\begin{bmatrix}
                                 0 \\
                                 \text{\,}\\
                                 g\overline{h}^{\alpha}_{lp}(\overline{S}_{0_{l}}-\overline{S}_{f_{l}}) \\
                                 \text{\,}\\
                                 g\overline{h}^{\alpha}_{lp}(\overline{S}_{0_{p}}-\overline{S}_{f_{p}})\\
                               \end{bmatrix}.
      \end{equation*}
      Replacing in equations $(\ref{28})$-$(\ref{29})$, $\overline{\phi}^{*}_{lp}$ and $\overline{\phi}^{**}_{lp}$, with $\phi^{*}_{lp}$ and $\phi^{**}_{lp}$, respectively, utilizing equation $(\ref{26})$ and rearranging, this results in
      \begin{equation}\label{30}
       \frac{1}{k}(\phi^{**}_{lp}-\phi^{*}_{lp})=\frac{1}{k}[\mathcal{P}_{2}(k)\phi^{*}_{lp}-\phi^{*}_{lp}]+O(k^{2}+\Delta y^{4}),
      \end{equation}
       where
       \begin{equation}\label{30a}
       \mathcal{P}_{2}(k)\phi^{*}_{lp}=\frac{1}{2}[\overline{\mathcal{P}}_{2}(k)\phi^{**}_{lp}+\overline{\mathcal{P}}_{2}(k)\phi^{*}_{lp}].
      \end{equation}
      Approximation $(\ref{30})$ suggests that the difference scheme defined by equation $(\ref{28})$, when it is stable, should be temporal second-order accurate and spatial fourth-order convergent.\\

       Plugging equations $(\ref{19})$, $(\ref{28})$ and using relation $(\ref{9})$ provides the desired time-split linearized explicit/implicit approach for solving a two-dimensional hydrodynamic flow equation $(\ref{4})$, that is, for $n=0,1,...,N-1$; $l=2,3,...,M_{x}-2$, and $p=2,3,...,M_{y}-2$,
       \begin{equation}\label{31}
       \phi^{*}_{lp}=\mathcal{P}_{1}(k/2)\phi^{n}_{lp},
      \end{equation}
      \begin{equation}\label{32}
       \phi^{**}_{lp}=\mathcal{P}_{2}(k)\phi^{*}_{lp},
      \end{equation}
      \begin{equation}\label{33}
       \phi^{n+1}_{lp}=\mathcal{P}_{1}(k/2)\phi^{**}_{lp},
      \end{equation}
      where the operator $\mathcal{P}_{1}(k/2)$ and $\mathcal{P}_{2}(k)$ are defined by equations $(\ref{20})$ and $(\ref{30a})$, respectively.

      \begin{remark}
      The true initial condition corresponds to a zero depth at any point and should change when the water starts to infiltrate on the ground surface. Since the depth appears in the denominator of many terms, the flow regime is discontinuous and large values of velocity occur. For the sake of numerical simulations we assume that the initial depth is different from zero, but too small. This suggests that small numerical oscillations can destroy the computed solutions. To overcome this drawback, we assume that the source terms are large, can vary more greater in time and space than the other terms in the system of equations $(\ref{1})$. Regarding the boundary conditions, as discussed in \cite{fr}, page 5, we suppose closed boundaries are formed by metal walls on three sides so that there is no flow through the wall (\textbf{Figure 1.i}: Prior to dam-break in Figure $\ref{fig1}$). Thus, the velocities $u$ and $v$ are perpendicular to the boundaries and are set equal zero. Additionally, the first equation in system given by $(\ref{1})$ subjects that a simple integration yields the constant depths at these boundaries. For non closed boundaries (\textbf{Figure 1.ii}: After dam-break in Figure $\ref{fig1}$, the boundary conditions must be well specified.
      \end{remark}

      For this reason, the considered equation $(\ref{4})$ is subjected to the following to initial condition
      \begin{equation}\label{34}
       \overline{\phi}_{lp}^{0}=\rho_{lp},\text{\,\,\,}l=0,1,...,M_{x},\text{\,\,\,}p=0,1,...,M_{y},
      \end{equation}
      and boundary condition
      \begin{equation}\label{35}
      \overline{\phi}_{lp}^{**}=\overline{\phi}_{lp}^{*}=\overline{\phi}_{lp}^{n}=f_{lp}^{n},\text{\,\,\,}l=0,M_{x},\text{\,\,\,}p=0,1,...,M_{y},
      \end{equation}
      \begin{equation}\label{36}
      \overline{\phi}_{lp}^{**}=\overline{\phi}_{lp}^{*}=\overline{\phi}_{lp}^{n}=f_{lp}^{n},\text{\,\,\,}l=0,1,...M_{x},\text{\,\,\,}p=0,M_{y},
      \end{equation}
      where the functions: $\rho=(\rho_{1},\rho_{2},\rho_{3})$ and $f=(f_{1},f_{2},f_{3})$, are given by the initial condition $(\ref{2})$ and boundary condition $(\ref{3b})$. Additionally, to begin the new algorithm, we should set
      \begin{equation}\label{37}
      \overline{\phi}_{lp}^{**}=\overline{\phi}_{lp}^{*}=\overline{\phi}_{lp}^{n}=f_{lp}^{n},\text{\,\,\,\,\,\,}l=1,M_{x}-1,\text{\,\,\,\,\,\,}p=0,1,...,M_{y},
      \end{equation}
      \begin{equation}\label{40}
      \overline{\phi}_{lp}^{**}=\overline{\phi}_{lp}^{*}=\overline{\phi}_{lp}^{n}=f_{lp}^{n},\text{\,\,\,\,\,\,}l=0,1,...,M_{x},\text{\,\,\,\,\,\,}p=1,M_{y}-1.
      \end{equation}

   \section{Overview on the stability analysis}\label{sec3}

   An overland flow is described using the shallow water equations or Saint-Venant system. These equations derive from the three-dimensional incompressible Navier-Stokes equations with some simplifying assumptions such as: the characteristic horizontal size of the field of study is much greater than the water depth, the acceleration due to the pressure balances the gravity, that is, the pressure is hydrostatic and the vertical velocity is negligible and thus has no equation. The considered dam-break flow is a nonlinear hyperbolic equation and the source term makes the overland flow equations more complex. Although a rigorous stability analysis for such equations is exceedingly difficult as already discussed in some previous works (for example, see \cite{1en}), the addition of the source term places extra requirement on the maximum admissible time step for stability. This suggests that the Courant-Friedrichs-Lewy (CFL) condition \cite{26db,41yzw} defined as

   \begin{equation}\label{41}
    \Delta t\leq\min\left\{\frac{\Delta x}{u_{\max}+\sqrt{gh_{\max}}},\frac{\Delta y}{v_{\max}+\sqrt{gh_{\max}}}\right\},
   \end{equation}
    should be considered as a guideline and the maximum admissible (allowable) time step for the developed approach $(\ref{31})$-$(\ref{40})$, has to be less than the predicted CFL condition $(\ref{41})$. Surprisingly, we should establish a time step condition that advances the approximate solution with a maximum time step greater than the one proposed in estimate $(\ref{41})$. The second stage of the new algorithm given by equation $(\ref{32})$ is an implicit finite difference scheme which is known to be unconditionally stable whereas the first and third steps defined by equations $(\ref{31})$ and $(\ref{33})$, respectively, are explicit finite difference formulations and have to require an appropriate time step restriction for their stability. In addition, the proposed approach $(\ref{31})$-$(\ref{40})$ splits the nonlinear system of hyperbolic equations into a series one-dimensional finite difference operators and thereby uses larger time steps for stability. This suggests that the developed technique $(\ref{31})$-$(\ref{40})$, should be more efficient in the computed solution of unsteady flow in the presence of inherent dissipation, discontinuity and stability. To avoid directional bias and to keep the temporal second-order convergence together with the fourth-order accurate in space of the constructed method, a spatial fourth-order difference formula and a temporal second-order approximation are required. Furthermore, being spatial fourth-order accurate, the one-dimensional difference operators $\mathcal{P}_{1}$ and $\mathcal{P}_{2}$, defined by equations $(\ref{20})$ and $(\ref{30a})$, respectively, show that the proposed computational method substantially increases oscillations (oscillation wavelengths equal $12\max\{\Delta x,\Delta y\}$) compared to upwind schemes which can reduce these oscillations. Although the upwind methods are generally first-order accurate, the disturbances in the flow regime that affect convection acceleration cannot be propagated upstream. This shows that they are effective to reduce oscillations.\\

    Now we should state and prove the main result of this paper.

    \begin{theorem} \label{t1} (time step restriction for stability).
    Suppose that $\overline{\phi}=(\overline{h},\overline{h}\overline{u},\overline{h}\overline{v})^{t}$, is the approximate solution provided by the developed time-split linearized explicit/implicit approach $(\ref{31})$-$(\ref{40})$, for solving the two-dimensional hydrodynamic problem $(\ref{1})$, with initial-boundary conditions $(\ref{2})$-$(\ref{3b})$. Thus, the new algorithm $(\ref{31})$-$(\ref{40})$ is stable under the following time step restriction
    \begin{equation}\label{43}
    k\leq \frac{48}{\gamma}\min\left\{\frac{\|\overline{\beta}\|_{0}}{\sqrt{M_{x}-3}\||\overline{u}|\|_{0,\infty}},
    \frac{\||\overline{u}|\|_{0,\infty}}{\||\overline{u}^{2}+\frac{1}{2}g\overline{h}|\|_{0,\infty}}\right\}\Delta x,
   \end{equation}
    \end{theorem}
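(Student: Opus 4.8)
Since the one-step operator factorises as $\mathcal{P}(k)=\mathcal{P}_{1}(k/2)\circ\mathcal{P}_{2}(k)\circ\mathcal{P}_{1}(k/2)$ by \pref{9}, the strategy is to estimate each factor separately in the discrete $L^{2}$-setting \pref{10} and to recombine them through submultiplicativity of the induced norm \pref{12a},
\begin{equation*}
\||\mathcal{P}(k)|\|\leq\||\mathcal{P}_{1}(k/2)|\|^{2}\,\||\mathcal{P}_{2}(k)|\|.
\end{equation*}
If the right-hand side is shown to be bounded by $1+Ck$ with $C$ independent of $k,\Delta x,\Delta y$, a discrete Gronwall argument over the $N=T/k$ levels will give $\||\overline{\phi}|\|_{0,\infty}\leq e^{CT}\|\overline{\phi}^{0}\|_{0}$, which is exactly stability in the norm \pref{11}. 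I would first dispose of the implicit middle stage \pref{32}: because $\mathcal{P}_{2}(k)$ is the symmetric average \pref{30a} of $\overline{\mathcal{P}}_{2}(k)$ taken at the two intermediate levels, equation \pref{28} is a Crank--Nicolson-type discretisation of $\phi_{t}=-F(\phi)_{y}+G(\phi)$. Testing \pref{28} with $\overline{\phi}^{**}+\overline{\phi}^{*}$ in the inner product \pref{10} and using that the centred operator $\delta_{y}^{4}$ is skew-symmetric up to boundary terms which vanish under \pref{35}--\pref{40} (so that $(\delta_{y}^{4}w,w)_{0}=0$), the flux contribution cancels and only the source term survives, giving $\||\mathcal{P}_{2}(k)|\|\leq 1+C_{G}k$. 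Hence the implicit stage is unconditionally stable and inflates the norm only at the harmless rate $1+C_{G}k$.

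The core of the argument is the explicit operator $\mathcal{P}_{1}(k/2)$ of \pref{20}. Here I would freeze the flux Jacobian $\nabla_{\phi}E(\overline{\phi})$ of \pref{21} at a reference state, treat \pref{31} as locally linear, and expand the energy $\|\overline{\phi}^{*}\|_{0}^{2}=\|\mathcal{P}_{1}(k/2)\overline{\phi}^{n}\|_{0}^{2}$:
\begin{equation*}
\|\overline{\phi}^{*}\|_{0}^{2}=\|\overline{\phi}^{n}\|_{0}^{2}-k\left(\delta_{x}^{4}E(\overline{\phi})^{n},\overline{\phi}^{n}\right)_{0}+\frac{k^{2}}{4}\left[\|\delta_{x}^{4}E(\overline{\phi})^{n}\|_{0}^{2}+\left(\delta_{x}^{2}(\nabla_{\phi}E\cdot\delta_{x}^{3\mp}E)^{n},\overline{\phi}^{n}\right)_{0}\right]+O(k^{3}).
\end{equation*}
Invoking Remark~2.1, namely $\delta_{x}^{4}=\tfrac12(\delta_{x}^{3+}+\delta_{x}^{3-})$ together with the summation-by-parts identity \pref{10aa}, the first-order bracket is governed by the skew-symmetric part of the frozen Jacobian, while the positive (amplifying) term $\tfrac{k^{2}}{4}\|\delta_{x}^{4}E(\overline{\phi})^{n}\|_{0}^{2}$ is balanced by the Lax--Wendroff-type correction $\tfrac{k^{2}}{8}\delta_{x}^{2}(\nabla_{\phi}E\cdot\delta_{x}^{3\mp}E)$ built into \pref{20}. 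The restriction \pref{43} is precisely the threshold at which the stabilising correction dominates, yielding $\|\overline{\phi}^{*}\|_{0}\leq\|\overline{\phi}^{n}\|_{0}$.

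The two quantities inside the $\min$ in \pref{43} record the two ways the frozen data enter these bounds. Estimating $\delta_{x}^{4}E(\overline{\phi})$ and the cross term through the induced matrix norm \pref{12a} of $\nabla_{\phi}E$, whose nonzero entries are assembled from $\overline{u}$, $\overline{h}$ and the flux component $\overline{u}^{2}+\tfrac12 g\overline{h}$ (compare \pref{21} and \pref{3bb}), produces on one side a ratio controlled by the advective magnitude $\||\overline{u}|\|_{0,\infty}$ together with the geometric factor $\sqrt{M_{x}-3}$ incurred when passing between the pointwise bound and the $\Delta x\Delta y$-weighted norm \pref{10} (this is where the auxiliary grid function $\overline{\beta}$ enters), and on the other side the flux ratio $\||\overline{u}|\|_{0,\infty}/\||\overline{u}^{2}+\tfrac12 g\overline{h}|\|_{0,\infty}$. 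Requiring $k/\Delta x$ to lie below both thresholds, with the numerical factor $48$ emerging from the $1/(12\Delta x)$ and $1/(6\Delta x)$ normalisations of $\delta_{x}^{4}$ and $\delta_{x}^{3\mp}$ in \pref{9a} and the constant $\gamma$ collecting the frozen operator norms, gives exactly \pref{43}.

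I expect the main obstacle to be the non-symmetry of the shallow-water flux Jacobian. Unlike the scalar Lax--Wendroff case, the matrix \pref{21} is far from symmetric, so the skew-symmetry of $\delta_{x}^{4}$ does not on its own annihilate the first-order energy contribution, and the amplifying and correcting quadratic terms no longer assemble into a perfect square. Controlling the residual will require either symmetrising $\nabla_{\phi}E$ via the entropy variables of the hyperbolic system (whose characteristic speeds are $\overline{u}$ and $\overline{u}\pm\sqrt{g\overline{h}}$), or, as the factorised bound \pref{43} suggests, estimating the advective part built from $\overline{u}$ and the pressure/flux part built from $\overline{u}^{2}+\tfrac12 g\overline{h}$ separately and retaining the more stringent of the two resulting limits on $k$. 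Once $\||\mathcal{P}_{1}(k/2)|\|\leq 1$ is secured, combining it with the benign implicit estimate and applying discrete Gronwall closes the proof.
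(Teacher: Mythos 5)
Your skeleton agrees with the paper's in its large-scale features: the implicit middle stage \pref{32} is dismissed as unconditionally stable, the restriction \pref{43} is extracted from the explicit operator $\mathcal{P}_{1}(k/2)$ of \pref{20} alone, the two entries of the $\min$ are traced to the flux components $\overline{h}\,\overline{u}$ and $\overline{h}\,\overline{u}^{2}+\frac{1}{2}g\overline{h}^{2}$, and the constants $48$ and $\gamma$ come from the $1/(12\Delta x)$ normalisation of $\delta_{x}^{4}$ and from the norm of the skew-symmetric pentadiagonal matrix in \pref{52} (bounded by $18$ via Ger\v{s}gorin, Lemma \ref{l4} — not, as you suggest, from the $1/(6\Delta x)$ in $\delta_{x}^{3\mp}$, whose contribution never survives to the final bound). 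Where you diverge is in the mechanism for the explicit stage, and this is where your proposal has a genuine hole: you assert that under \pref{43} the Lax--Wendroff-type correction $\frac{k^{2}}{8}\delta_{x}^{2}(\nabla_{\phi}E\cdot\delta_{x}^{3\mp}E)$ dominates the amplifying term $\frac{k^{2}}{4}\|\delta_{x}^{4}E\|_{0}^{2}$, but you never derive this, and you yourself concede that for the non-symmetric Jacobian \pref{21} the two quadratic terms do not assemble into a perfect square. Neither symmetrisation by entropy variables nor the "estimate the two parts separately" fallback is carried out, so the central inequality $\|\overline{\phi}^{*}\|_{0}\leq\|\overline{\phi}^{n}\|_{0}$ remains unproved in your plan. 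The threshold \pref{43} cannot be "precisely" the point where the correction wins, because the correction term plays no role whatsoever in producing \pref{43}.

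The paper's route avoids this difficulty entirely, at the cost of rigour rather than by resolving it. It splits the vector non-amplification requirement into the three scalar component inequalities \pref{46}--\pref{48}, expands each square, applies Cauchy--Schwarz to reduce the quadratic-in-$k$ condition to the linear one \pref{49}, and then simply truncates the $O(k)$ correction inside the norm as "infinitesimal" (approximation \pref{50}), leaving $k\|\delta_{x}^{4}E_{jp}^{n}\|_{\sim}\leq 4\|\cdot\|_{\sim}$ as in \pref{51}. The matrix form \pref{52} and Lemma \ref{l4} then give \pref{59} and \pref{63}. The paper is explicit that this yields a necessary-type guideline rather than a sufficiency proof, and it also relies on the pointwise product bound $\|\overline{h}^{n}_{p}\overline{u}^{n}_{p}\|_{\sim}\leq\|\overline{h}^{n}_{p}\|_{\sim}\|\overline{u}^{n}_{p}\|_{\sim}$, which is itself not a consequence of Cauchy--Schwarz in the unweighted $\ell^{2}$ norm \pref{11a}. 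So your instinct that a genuine energy balance is needed for a true stability proof is sound, but if your goal is to reproduce Theorem \ref{t1} as stated, you should drop the balancing argument, adopt the componentwise Cauchy--Schwarz-plus-truncation reduction, and invoke Lemma \ref{l4}; if your goal is a rigorous sufficiency proof, the obstacle you flagged is real and your proposal does not yet overcome it. Your composite bound $\||\mathcal{P}(k)|\|\leq\||\mathcal{P}_{1}(k/2)|\|^{2}\||\mathcal{P}_{2}(k)|\|$ with a discrete Gronwall step is a cleaner global framework than the paper's per-step requirement \pref{44a}, but note also that skew-symmetry of $\delta_{y}^{4}$ alone does not give $(\delta_{y}^{4}F(\overline{\phi}),\overline{\phi})_{0}=0$ for the nonlinear flux $F$, so even your treatment of the implicit stage needs a linearisation or freezing argument to close.
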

    where $\overline{\beta}=1\in L^{2}(\Omega)$, $0<\gamma\leq 18$, $g$ is the acceleration of gravity, $k$ denotes the time step and $\Delta x$ represents the mesh grid in the $x$-direction, $M_{x}>3$ is an integer.

   \begin{remark}
   One should observe that if $(M_{y}-3)\Delta y\leq b$, where $b$ is a positive parameter (which is true whenever $\Omega=(a_{1},a_{2})\times(b_{1},b_{2})$, and the mesh size $\Delta y$ in the $y$-direction is defined as $\Delta y=\frac{b_{2}-b_{1}}{M_{y}}$ with $b=b_{2}-b_{1}$), then $\frac{\|\overline{\beta}\|_{0}}{\sqrt{M_{x}-3}}\leq \sqrt{b\Delta x}$.
   \end{remark}

    The following Definition and Lemmas are very important in the proof of Theorem $\ref{t1}$.

    \begin{definition}\label{d1} \cite{sb}
    Let $A$ be an $n\times n$ matrix, then the matrix $A$ is normal if
    \begin{equation*}
     A^{H}A=AA^{H},
   \end{equation*}
   where $A^{H}$ denotes the transpose conjugate of $A$.
   \end{definition}

   \begin{lemma}\label{l2} (Ger\v{s}gorin theorem)\cite{sb}.
    Consider $A=[a_{ij}]$ be an $n\times n$ matrix. Let $D_{i}$ be the disc in the complex field $\mathbb{C}$ centered at $a_{ii}$ with radius $\underset{\underset{j\neq i}{j=1}}{\overset{n}\sum}|a_{ij}|$, that is,
   \begin{equation*}
    D_{i}=\left\{\lambda\in\mathbb{C}:\text{\,}|\lambda-a_{ii}|\leq\underset{\underset{j\neq i}{j=1}}{\overset{n}\sum}|a_{ij}|\right\}.
   \end{equation*}
    Thus, all the eigenvalues of $A$ are contained in the union of discs $D_{i}$, for $i=1,2,...,n$.
   \end{lemma}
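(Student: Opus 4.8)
The plan is to prove that the one-step amplification operator $\mathcal{P}(k)=\mathcal{P}_{1}(k/2)\circ\mathcal{P}_{2}(k)\circ\mathcal{P}_{1}(k/2)$ of $(\ref{9})$ is a contraction in the induced norm $(\ref{12a})$, i.e. $\||\mathcal{P}(k)|\|\leq 1$, since by the definition of stability in the $L^{\infty}(0,T;L^{2})$-norm $(\ref{11})$ this is exactly what is required. By submultiplicativity of the induced matrix norm,
\begin{equation*}
\||\mathcal{P}(k)|\|\leq\||\mathcal{P}_{1}(k/2)|\|^{2}\,\||\mathcal{P}_{2}(k)|\|.
\end{equation*}
The middle stage $(\ref{32})$ is the implicit, trapezoidal-type scheme $(\ref{28})$--$(\ref{30a})$, which is unconditionally stable, so I would first record that $\||\mathcal{P}_{2}(k)|\|\leq 1$ independently of $k$. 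This explains why the final restriction $(\ref{43})$ involves only $\Delta x$: the whole stability requirement collapses onto the explicit factor, and it suffices to prove $\||\mathcal{P}_{1}(k/2)|\|\leq 1$ under $(\ref{43})$.

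First I would linearize the nonlinear operator $\mathcal{P}_{1}(k/2)$ of $(\ref{20})$ by freezing the flux Jacobian $\nabla_{\phi}E(\overline{\phi})$ and the coefficients of $E(\overline{\phi})$ at the current grid values, as in the standard matrix (von Neumann) stability analysis, and represent the resulting linear map as a matrix $P_{1}$ acting on the grid-function vector indexed by $l=2,\dots,M_{x}-2$. Substituting the explicit stencils $(\ref{9a})$ for $\delta_{x}^{4}$, $\delta_{x}^{2}$ and $\delta_{x}^{3\mp}$, together with the explicit Jacobian entries from $(\ref{21})$, into $(\ref{20})$, I would read off the diagonal entry of $P_{1}$ (from the identity $\mathcal{I}$ and the self-coupling weights of the stencils) and its off-diagonal entries, tracking the numerical constants $\tfrac{1}{12},\tfrac{1}{6},\tfrac{1}{2}$ that combine into the prefactor $\tfrac{48}{\gamma}$ and the admissible range $0<\gamma\leq 18$ (the value $18=1+8+8+1$ being the $\ell^{1}$-weight of the $\delta_{x}^{4}$ stencil).

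Next I would invoke Definition $\ref{d1}$ to verify that $P_{1}$ is normal, so that its induced norm equals its spectral radius, $\||P_{1}|\|=\rho(P_{1})$, and then apply the Ger\v{s}gorin theorem (Lemma $\ref{l2}$): every eigenvalue of $P_{1}$ lies in a disc centered at the diagonal entry with radius equal to the sum of the moduli of the off-diagonal entries in that row. Requiring each disc to be contained in the closed unit disc $\{|\lambda|\leq 1\}$ forces two separate inequalities on $k$. The first-derivative (advection) contribution $-\tfrac{k}{2}\delta_{x}^{4}E$, after bounding the velocity entries by $\||\overline{u}|\|_{0,\infty}$ and passing from the pointwise Ger\v{s}gorin radius to the discrete norm $(\ref{10})$ by a Cauchy--Schwarz estimate over the $M_{x}-3$ active indices (which introduces $\sqrt{M_{x}-3}$) and by writing the uniform bound through $\|\overline{\beta}\|_{0}=\|1\|_{0}$, yields the first entry of the minimum in $(\ref{43})$; the second-order corrective term $\tfrac{k^{2}}{8}\delta_{x}^{2}(\nabla_{\phi}E\cdot\delta_{x}^{3\mp}E)$, whose dominant entry is $\overline{u}^{2}+\tfrac{1}{2}g\overline{h}$, yields the second entry. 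Taking the smaller restriction produces $(\ref{43})$.

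The hard part will be threefold. First, the nonlinearity: rigorously justifying the coefficient-freezing and controlling the $3\times 3$ block coupling between the components $\overline{h},\overline{h}\overline{u},\overline{h}\overline{v}$ induced by the full Jacobian $(\ref{21})$, since $\mathcal{P}_{1}$ acts on vector-valued rather than scalar grid functions. Second, the normality claim: the fourth-order centered operator $\delta_{x}^{4}$ together with the one-sided corrective term need not be exactly normal, so one must either diagonalize the frozen-coefficient symbol or argue that its symmetric and skew-symmetric parts separately respect the unit-disc condition, so that Ger\v{s}gorin's eigenvalue bound genuinely controls $\||P_{1}|\|$. Third, the bookkeeping that converts the pointwise Ger\v{s}gorin radii into the global norms $\||\cdot|\|_{0,\infty}$ and $\|\cdot\|_{0}$: the factor $\sqrt{M_{x}-3}$ and the constant $\tfrac{48}{\gamma}$ must be carried through so that the two competing bounds align exactly with the minimum in $(\ref{43})$. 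I expect the normality step combined with the nonlinear block coupling to be the principal obstacle.
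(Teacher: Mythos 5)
You have proved the wrong statement. The lemma you were asked to establish is Lemma \ref{l2}, the Ger\v{s}gorin circle theorem: every eigenvalue of an $n\times n$ matrix $A=[a_{ij}]$ lies in the union of the discs $D_{i}$ centered at the diagonal entries $a_{ii}$ with radii $\sum_{j\neq i}|a_{ij}|$. This is a self-contained, classical linear-algebra fact; in the paper it carries no proof at all --- it is quoted from the textbook \cite{sb} and then used as a tool inside the proofs of Lemma \ref{l4} and Theorem \ref{t1}. What you have written instead is an outline of the stability analysis for the time-split scheme, i.e.\ a sketch of the proof of Theorem \ref{t1} and the time step restriction \pref{43}. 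Moreover, your sketch explicitly \emph{invokes} Lemma \ref{l2} (``then apply the Ger\v{s}gorin theorem'') as a known result, so read as a proof of Lemma \ref{l2} it is circular, and read as written it simply addresses a different statement. Nothing in your proposal --- the splitting $\mathcal{P}(k)=\mathcal{P}_{1}(k/2)\circ\mathcal{P}_{2}(k)\circ\mathcal{P}_{1}(k/2)$, the normality of the frozen-coefficient matrix, the norms \pref{10}--\pref{11}, the constant $48/\gamma$ --- bears on the eigenvalue localization claim.

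For the record, the expected argument is short and elementary. Let $\lambda$ be an eigenvalue of $A$ with eigenvector $x\neq 0$, and choose an index $i$ such that $|x_{i}|=\max_{1\leq j\leq n}|x_{j}|>0$. The $i$-th component of $Ax=\lambda x$ reads $(\lambda-a_{ii})x_{i}=\sum_{j\neq i}a_{ij}x_{j}$, hence $|\lambda-a_{ii}|\,|x_{i}|\leq\sum_{j\neq i}|a_{ij}|\,|x_{j}|\leq|x_{i}|\sum_{j\neq i}|a_{ij}|$, and dividing by $|x_{i}|>0$ gives $|\lambda-a_{ii}|\leq\sum_{j\neq i}|a_{ij}|$, i.e.\ $\lambda\in D_{i}$. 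Since $\lambda$ was an arbitrary eigenvalue, all eigenvalues of $A$ lie in $\bigcup_{i=1}^{n}D_{i}$. If you wish to connect your material to the paper, it belongs to the proof of Theorem \ref{t1}, where the paper (like your sketch) treats the implicit middle stage \pref{32} as unconditionally stable, reduces stability to the explicit operator $\mathcal{P}_{1}(k/2)$, and uses Lemma \ref{l2} through Lemma \ref{l4} to bound $\rho_{\max}(A)\leq 18$ for the pentadiagonal matrix arising from the $\delta_{x}^{4}$ stencil --- but that is a separate statement from the one under review.
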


     \begin{lemma}\label{l3} \cite{sb}
    Every Hermitian matrix $C$ of size $n\times n$ is diagonalizable and its maximum and minimum eigenvalues $\rho_{\max}$ and $\sigma_{\min}$, respectively, are given by
   \begin{equation*}
    \rho_{\max}=\underset{0\neq z\in \mathbb{C}^{n}}{\max}\frac{z^{H}Cz}{z^{H}z},\text{\,\,\,\,and\,\,\,\,}\sigma_{\min}=\underset{0\neq z\in \mathbb{C}^{n}}{\min}\frac{z^{H}Cz}{z^{H}z}.
   \end{equation*}
   Furthermore, any square matrix $C$ that is normal is also diagonalizable.
   \end{lemma}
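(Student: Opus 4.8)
The plan is to prove the three assertions in order: unitary diagonalizability of a Hermitian $C$, the extremal Rayleigh-quotient formulas, and diagonalizability of a normal $C$. First I would record that every eigenvalue of a Hermitian matrix is real: if $Cz=\lambda z$ with $z\neq 0$, then $\lambda z^{H}z=z^{H}Cz$; taking conjugate transposes and using $C=C^{H}$ yields $\overline{\lambda}z^{H}z=z^{H}Cz$, so $\lambda=\overline{\lambda}$. I would then establish the spectral theorem by induction on $n$. The case $n=1$ is immediate. For the inductive step, choose a unit eigenvector $z_{1}$ with eigenvalue $\lambda_{1}$, complete it to an orthonormal basis, and assemble these columns into a unitary matrix $U_{1}$. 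Since $U_{1}^{H}CU_{1}$ is again Hermitian and its first column is $(\lambda_{1},0,\ldots,0)^{t}$, its first row must equal $(\lambda_{1},0,\ldots,0)$; hence $U_{1}^{H}CU_{1}$ is block-diagonal with scalar block $\lambda_{1}$ and an $(n-1)\times(n-1)$ Hermitian block $C'$. Applying the inductive hypothesis to $C'$ and composing unitaries produces a unitary $U$ with $U^{H}CU=D=\mathrm{diag}(\lambda_{1},\ldots,\lambda_{n})$ real, so in particular $C$ is diagonalizable.

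For the Rayleigh quotients I would substitute $z=Uw$ into $z^{H}Cz/z^{H}z$. Unitarity of $U$ gives $z^{H}Cz/z^{H}z=w^{H}Dw/w^{H}w=\bigl(\sum_{i}\lambda_{i}|w_{i}|^{2}\bigr)/\bigl(\sum_{i}|w_{i}|^{2}\bigr)$, a convex combination of the real eigenvalues. Such a combination never exceeds the largest eigenvalue nor falls below the smallest, and equality is attained by taking $w$ to be the standard basis vector selecting the extreme eigenvalue; this gives the stated formulas for $\rho_{\max}$ and $\sigma_{\min}$.

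Finally, for a normal $C$ I would split $C=A+\ima B$ into its Hermitian and skew-Hermitian parts, $A=\tfrac12(C+C^{H})$ and $\ima B=\tfrac12(C-C^{H})$, with $A$ and $B$ Hermitian. A direct expansion shows that the normality identity $C^{H}C=CC^{H}$ is equivalent to $AB=BA$. I then invoke simultaneous diagonalizability of commuting Hermitian matrices: diagonalizing $A$ by the spectral theorem and noting that each eigenspace of $A$ is invariant under $B$ (because they commute), I restrict $B$ to each such eigenspace and diagonalize it there by the spectral theorem again, obtaining a single unitary $U$ that diagonalizes both $A$ and $B$, hence $C=A+\ima B$. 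The main obstacle is this simultaneous-diagonalization step, which requires verifying invariance of each $A$-eigenspace under $B$ and handling repeated eigenvalues of $A$; a cleaner but heavier alternative is Schur's unitary triangularization, after which a normal upper-triangular factor $T$ is forced to be diagonal by comparing the diagonal entries of $T^{H}T$ and $TT^{H}$.
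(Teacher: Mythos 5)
Your proposal is correct in all three parts, but there is nothing in the paper to compare it against: the paper states this lemma as a quoted result from the textbook of Stoer and Bulirsch (reference \cite{sb}) and gives no proof at all, using it only as a black box in the proofs of Lemma 3.4 and Theorem 3.1. What you have written is therefore a self-contained substitute for the citation, and it is the standard one. The reality of Hermitian eigenvalues, the deflation-by-unitary induction for the spectral theorem (with Hermitian symmetry forcing the first row of $U_{1}^{H}CU_{1}$ to vanish beyond the $(1,1)$ entry), and the Rayleigh-quotient bounds obtained from the substitution $z=Uw$, which exhibits $z^{H}Cz/z^{H}z$ as a convex combination of real eigenvalues attained at standard basis vectors, are all sound. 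For the normal case, your Cartesian decomposition $C=A+\mathbf{i}B$ with $A,B$ Hermitian, the verification that $C^{H}C=CC^{H}$ is equivalent to $AB=BA$, and the simultaneous diagonalization via $B$-invariance of each $A$-eigenspace (or the heavier Schur-triangularization route you mention) complete the argument correctly. One point worth keeping in mind for how the lemma is consumed downstream: in Lemma 3.4 the paper applies this result to a real matrix with $C^{t}=-C$, whose eigenvalues are purely imaginary, so the diagonalization is necessarily over $\mathbb{C}$ by a unitary rather than a real orthogonal matrix; your proof, which works throughout in $\mathbb{C}^{n}$, covers this case as stated, whereas the paper's subsequent claim of an orthogonal diagonalizer is loose on this point.
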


    \begin{lemma}\label{l4}
    Suppose that $C$ is an $n\times n$ pentadiagonal matrix defined as

   \begin{equation*}
    C=\begin{bmatrix}
        \alpha_{0} & \alpha_{1} & -\alpha_{2} & 0 & \cdots & 0 \\
        -\alpha_{1} & \alpha_{0} & \alpha_{1} & -\alpha_{2} & \ddots & \vdots \\
        \alpha_{2} & -\alpha_{1} & \alpha_{0} & \alpha_{1} & \ddots & 0 \\
        0 & \alpha_{0} & \ddots & \ddots & \ddots & -\alpha_{2} \\
        \vdots & \ddots & \ddots & \ddots & \ddots & \alpha_{1} \\
        0 & \ldots & 0 & \alpha_{2} & -\alpha_{1} & \alpha_{0} \\
      \end{bmatrix}.
   \end{equation*}
    Thus, $C$ is normal and its maximum eigenvalue satisfies
    \begin{equation*}
     0<\||C|\|=\rho_{\max}(C)\leq \alpha_{0}+2(|\alpha_{1}|+|\alpha_{2}|),
    \end{equation*}
    where $\||\cdot|\|$ is the matrix norm defined by equation $(\ref{12a})$.
   \end{lemma}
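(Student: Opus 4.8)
The plan is to exploit the sign pattern that makes the off-diagonal part of $C$ skew-symmetric, and then to combine the resulting normality with a Gershgorin estimate.

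First I would split $C = \alpha_{0}I + S$, where $S = C - \alpha_{0}I$ gathers the off-diagonal bands. Reading off the entries, the first super- and sub-diagonals carry $\alpha_{1}$ and $-\alpha_{1}$, while the second super- and sub-diagonals carry $-\alpha_{2}$ and $\alpha_{2}$; hence $S^{t} = -S$, so $S$ is a real skew-symmetric matrix. Normality is then immediate: since $C^{H} = C^{t} = \alpha_{0}I - S$, a one-line computation gives $C^{t}C = \alpha_{0}^{2}I - S^{2} = CC^{t}$, so $C^{H}C = CC^{H}$ and $C$ is normal in the sense of Definition $\ref{d1}$.

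Next I would identify the matrix norm with the spectral radius. Because $\|\cdot\|_{\sim}$ is the unweighted Euclidean norm on $\mathbb{R}^{M_{x}-3}$, the induced norm $\||\cdot|\|$ of $(\ref{12a})$ is the ordinary operator $2$-norm, and for a normal matrix this equals the largest modulus of the eigenvalues. Concretely, the eigenvalues of the real skew-symmetric $S$ are purely imaginary, say $\ima\beta_{j}$, so those of $C$ are $\lambda_{j} = \alpha_{0} + \ima\beta_{j}$ and the singular values are $|\lambda_{j}| = \sqrt{\alpha_{0}^{2}+\beta_{j}^{2}}$. Invoking Lemma $\ref{l3}$ for diagonalizability, I conclude $\||C|\| = \max_{j}|\lambda_{j}| = \rho_{\max}(C)$.

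Finally I would bound $\rho_{\max}(C)$ by Gershgorin (Lemma $\ref{l2}$). In every row the diagonal entry is $\alpha_{0}$ and the off-diagonal absolute values sum to at most $|\alpha_{1}| + |-\alpha_{2}| + |-\alpha_{1}| + |\alpha_{2}| = 2(|\alpha_{1}| + |\alpha_{2}|)$, the first two and last two rows contributing strictly less. Thus each eigenvalue obeys $|\lambda - \alpha_{0}| \leq 2(|\alpha_{1}| + |\alpha_{2}|)$, whence $|\lambda| \leq \alpha_{0} + 2(|\alpha_{1}| + |\alpha_{2}|)$, which is the asserted upper bound. For the strict positivity $0 < \||C|\|$, note that $\alpha_{0} > 0$ forces $|\lambda_{j}| \geq \alpha_{0} > 0$ for every eigenvalue, so $\rho_{\max}(C) \geq \alpha_{0} > 0$. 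The one point needing care is the identification $\||C|\| = \rho_{\max}(C)$: it rests entirely on normality, so the skew-symmetric decomposition in the first step is the load-bearing observation, while the remaining estimates are a direct reading of the band structure together with a textbook Gershgorin argument.
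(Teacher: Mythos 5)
Your proof follows the same overall architecture as the paper's (establish normality, identify the induced norm with the spectral radius via diagonalizability, then bound the eigenvalues with Ger\v{s}gorin), but your execution is actually more careful on the one point where the paper's own argument is shaky. The paper asserts $C^{t}=-C$, which is false whenever $\alpha_{0}\neq 0$ since the diagonal survives transposition; your decomposition $C=\alpha_{0}I+S$ with $S$ skew-symmetric is the correct way to get $C^{t}C=\alpha_{0}^{2}I-S^{2}=CC^{t}$ for general $\alpha_{0}$, and it degenerates to the paper's identity exactly in the case $\alpha_{0}=0$ that is used later for the matrix $A$ in the stability proof. Likewise, the paper factors $C=B\,\mathrm{diag}(\lambda_{1},\ldots,\lambda_{n})B^{t}$ with a real orthogonal $B$, which cannot hold for a real normal matrix with non-real eigenvalues; your route through the singular values $|\lambda_{j}|=\sqrt{\alpha_{0}^{2}+\beta_{j}^{2}}$ avoids that misstep. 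The one place your argument does not quite cover the lemma as used is the strict positivity $0<\||C|\|$: you derive it from $\alpha_{0}>0$, but the hypothesis does not grant this, and in the application $\alpha_{0}=0$ (the matrix $A$ has zero diagonal). Positivity should instead be argued from $C\neq 0$ (equivalently, not all of $\alpha_{0},\alpha_{1},\alpha_{2}$ vanish), since any nonzero matrix has positive induced norm; with that one-line repair your proof is complete and, in my view, tighter than the one in the paper.
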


   \begin{proof}
    Firstly, it's easy to see that: $C^{t}=-C$, so $C^{t}C=-C^{2}=CC^{t}$. Hence, the matrix $C$ is normal. Furthermore, it is not hard to observe that $C^{t}C$ is symmetric. Thus, $C^{t}C$ is diagonalizable and its maximum eigenvalue $\rho_{\max}(C^{t}C)$ is given by
    \begin{equation}\label{54}
    \rho_{\max}(C^{t}C)=\underset{0\neq z\in \mathbb{R}^{n}}{\max}\frac{z^{t}C^{t}Cz}{z^{t}z}.
   \end{equation}

   Since $z^{t}z=\|z\|_{\sim}^{2}$ and $z^{t}C^{t}Cz=\|Cz\|_{\sim}^{2}$, where $\|\cdot\|_{\sim}$ is the vector norm defined by equation $(\ref{11a})$. This fact combined with equations $(\ref{12a})$ and $(\ref{54})$, provide $\rho_{\max}(C^{t}C)=\||C^{t}C|\|=\||-C|\|^{2}=\||C|\|^{2}$. This is equivalent to
   \begin{equation}\label{55}
    \||C|\|=\sqrt{\rho_{\max}(C^{t}C)}.
   \end{equation}

   Because $C$ is normal, it follows from Lemma $\ref{l3}$ that the matrix $C$ is diagonalizable. So, there is an orthogonal matrix $B$ of size $n\times n$, so that
   \begin{equation*}
     C=Bdiag(\lambda_{1},...,\lambda_{n})B^{t},
    \end{equation*}
    where $diag(\lambda_{1},...,\lambda_{n})$ is the diagonal matrix whose diagonal elements are $\lambda_{j}$, $j=1,2,...,n$, which are the eigenvalues of $C$. But $B^{t}B=I_{n}=BB^{t}$, where $I_{n}$ is the identity matrix of size $n\times n$. So,
    \begin{equation*}
     C^{t}C=Bdiag(\lambda_{1},...,\lambda_{n})B^{t}Bdiag(\lambda_{1},...,\lambda_{n})B^{t}=Bdiag(\lambda_{1}^{2},...,\lambda_{n}^{2})B^{t}.
    \end{equation*}

    Thus, $\rho_{\max}(C^{t}C)=\rho_{\max}(C)^{2}$. But it follows from the Ger\v{s}gorin result given by Lemma $\ref{l2}$ that all the eigenvalues of $C$ are contained in the discs centered at $\alpha_{0}$ with radius $2(|\alpha_{1}|+|\alpha_{2}|)$. Hence, $|\rho_{\max}(C)-\alpha_{0}|\leq 2(|\alpha_{1}|+|\alpha_{2}|)$, which is equivalent to: $0<\rho_{\max}(C)\leq \alpha_{0}+2(|\alpha_{1}|+|\alpha_{2}|)$. This fact, together with equation $(\ref{55})$ complete the proof of Lemma $\ref{l4}$.
   \end{proof}

    \begin{proof} (of Theorem $\ref{t1}$).
      We recall that the second stage $(\ref{32})$ of the constructed technique $(\ref{31})$-$(\ref{33})$ is an implicit difference formulation, so the difference scheme $(\ref{32})$ is unconditionally stable. Additionally, the first step $(\ref{31})$ and third one $(\ref{33})$ are explicit difference schemes, provide that they should be stable under a time step requirement. Since equations $(\ref{31})$ and $(\ref{33})$ are defined by the same operator $\mathcal{P}_{1}(k/2)$ given by equation $(\ref{20})$, then both first and third stages of the numerical method are stable if
      \begin{equation}\label{44a}
      \||\mathcal{P}_{1}(k/2)\phi_{p}^{\theta}|\|_{0,\sim}<||\phi_{p}^{\theta}|\|_{0,\sim},
      \end{equation}
      for $p=2,3,...,M_{y}-2$, and $\theta\in\{n,**\}$, where the norm $\||\cdot|\|_{0,\sim}$, is defined in equation $(\ref{10a})$. We have to find a time step requirement for which estimate $(\ref{44a})$ holds only in the case $\theta=n$. The case $\theta=**$ gives the same result.\\

      For $\theta=n$, plugging equation $(\ref{20})$ and estimate $(\ref{44a})$, this results in
      \begin{equation}\label{44}
      \||\phi^{n}_{p}-\frac{k}{2}\delta^{4}_{x}E(\phi^{n}_{p})+\frac{k^{2}}{8}\delta^{2}_{x}[\nabla_{\phi}E(\phi^{n}_{p})\cdot\delta^{3\mp}_{x}E(\phi^{n}_{p})]|\|_{0,\sim}
      <||\phi_{p}^{n}|\|_{0,\sim}.
      \end{equation}

      Set
      \begin{equation*}
      \overline{h}_{p}^{n}=[\overline{h}_{2p}^{n},...,\overline{h}_{M_{x}-2,p}^{n}]^{t},\text{\,\,}
      \overline{u}_{p}^{n}=[\overline{u}_{2p}^{n},...,\overline{u}_{M_{x}-2,p}^{n}]^{t},\text{\,\,}
      \overline{v}_{p}^{n}=[\overline{v}_{2p}^{n},...,\overline{v}_{M_{x}-2,p}^{n}]^{t},\text{\,\,}
      E_{1p}^{n}=[\overline{h}_{2p}^{n}\overline{u}_{2p}^{n},...,\overline{h}_{M_{x}-2,p}^{n}\overline{u}_{M_{x}-2,p}^{n}],
    \end{equation*}
    \begin{equation}\label{45}
      E_{2p}^{n}=[\overline{h}_{2p}^{n}\overline{u}_{2p}^{2,n}+\frac{1}{2}g\overline{h}_{2p}^{2,n},...,
      \overline{h}_{M_{x}-2,p}^{n}\overline{u}_{M_{x}-2,p}^{2,n}+\frac{1}{2}g\overline{h}_{M_{x}-2,p}^{2,n}]^{t},\text{\,\,\,}
      E_{3p}^{n}=[\overline{h}_{2p}^{n}\overline{u}_{2p}^{n}\overline{v}_{2p}^{n},...,\overline{h}_{M_{x}-2,p}^{n}\overline{u}_{M_{x}-2,p}^{n}\overline{v}_{M_{x}-2,p}^{n}]^{t}.
      \end{equation}

      Utilizing equation $(\ref{21})$, simple calculations yield
      \begin{equation*}
      \nabla_{\phi}E(\overline{\phi})^{n}_{lp}=\begin{bmatrix}
      \overline{u}^{n}_{lp} & \overline{h}^{n}_{lp} & 0 \\
       \text{\,}\\
       \overline{u}^{2,n}_{lp}+g\overline{h}^{n}_{lp} & 2\overline{h}^{n}_{lp}\overline{u}^{n}_{lp} & 0 \\
       \text{\,}\\
       \overline{u}^{n}_{lp}\overline{v}^{n}_{lp} & \overline{h}^{n}_{lp}\overline{v}^{n}_{lp} & \overline{h}^{n}_{lp}\overline{u}^{n}_{lp} \\
       \end{bmatrix}\begin{bmatrix}
                      \delta^{3\mp}_{x}E_{1lp}^{n} \\
                      \text{\,}\\
                      \delta^{3\mp}_{x}E_{2lp}^{n} \\
                      \text{\,}\\
                      \delta^{3\mp}_{x}E_{3lp}^{n} \\
                    \end{bmatrix}=\begin{bmatrix}
                      \overline{u}^{n}_{lp}\delta^{3\mp}_{x}E_{1lp}^{n}+\overline{h}^{n}_{lp}\delta^{3\mp}_{x}E_{1lp}^{n} \\
                      \text{\,}\\
                      (\overline{u}^{2,n}_{lp}+g\overline{h}^{n}_{lp})\delta^{3\mp}_{x}E_{1lp}^{n}+2\overline{h}^{n}_{lp}\overline{u}^{n}_{lp}\delta^{3\mp}_{x}E_{2lp}^{n} \\
                      \text{\,}\\
                      \overline{u}^{n}_{lp}\overline{v}^{n}_{lp}\delta^{3\mp}_{x}E_{1lp}^{n}+\overline{h}^{n}_{lp}\overline{v}^{n}_{lp}\delta^{3\mp}_{x}E_{2lp}^{n}+
                      \overline{h}^{n}_{lp}\overline{u}^{n}_{lp}E_{3lp}^{n} \\
                    \end{bmatrix},
      \end{equation*}
      for $l=2,3,...,M_{x}-2$, and $p=2,3,...,M_{y}-2$. Substituting this into estimate $(\ref{44})$ and using the definition of the $[\mathbb{R}^{M_{x}-3}]^{3}$-norm, $\||\cdot|\|_{0,\sim}$, defined in equation $(\ref{10a})$ to get
      \begin{equation*}
      \|\overline{h}^{n}_{p}-\frac{k}{2}[\delta^{4}_{x}E_{1p}^{n}-\frac{k}{4}\delta^{2}_{x}(\overline{u}^{n}_{p}\delta^{3\mp}_{x}E^{n}_{1p}+\overline{h}^{n}_{p}
      \delta^{3\mp}_{x}E^{n}_{2p})]\|_{\sim}^{2}+\|(\overline{h}\overline{u})^{n}_{p}-\frac{k}{2}[\delta^{4}_{x}E_{2p}^{n}-\frac{k}{4}\delta^{2}_{x}
      ((\overline{u}^{2}+g\overline{h})_{p}^{n}\delta^{3\mp}_{x}E^{n}_{1p}+2(\overline{h}\overline{v})^{n}_{p}\delta^{3\mp}_{x}E^{n}_{2p})]\|_{\sim}^{2}+
    \end{equation*}
      \begin{equation}\label{45a}
      \|(\overline{h}\overline{v})^{n}_{p}-\frac{k}{2}[\delta^{4}_{x}E_{3p}^{n}-\frac{k}{4}\delta^{2}_{x}((\overline{u}\overline{v})_{p}^{n}\delta^{3\mp}_{x}E^{n}_{1p}+
      (\overline{h}\overline{v})_{p}^{n}\delta^{3\mp}_{x}E^{n}_{2p}+(\overline{h}\overline{u})^{n}_{p}\delta^{3\mp}_{x}E^{n}_{3p})]\|_{\sim}^{2}<
      \|\overline{h}^{n}_{p}\|_{\sim}^{2}+\|(\overline{h}\overline{u})^{n}_{p}\|_{\sim}^{2}+\|(\overline{h}\overline{v})^{n}_{p}\|_{\sim}^{2}.
      \end{equation}

      Because we are interested in the allowable time step requirement for which estimate $(\ref{45a})$ is satisfied, we have to find a maximum time step restriction that satisfies the following three inequalities:
       \begin{equation}\label{46}
      \|\overline{h}^{n}_{p}-\frac{k}{2}[\delta^{4}_{x}E_{1p}^{n}-\frac{k}{4}\delta^{2}_{x}(\overline{u}^{n}_{p}\delta^{3\mp}_{x}E^{n}_{1p}+\overline{h}^{n}_{p}
      \delta^{3\mp}_{x}E^{n}_{2p})]\|_{\sim}^{2}<\|\overline{h}^{n}_{p}\|_{\sim}^{2},
      \end{equation}
       \begin{equation}\label{47}
      \|(\overline{h}\overline{u})^{n}_{p}-\frac{k}{2}[\delta^{4}_{x}E_{2p}^{n}-\frac{k}{4}\delta^{2}_{x}((\overline{u}^{2}+g\overline{h})_{p}^{n}\delta^{3\mp}_{x}E^{n}_{1p}+
      2(\overline{h}\overline{v})^{n}_{p}\delta^{3\mp}_{x}E^{n}_{2p})]\|_{\sim}^{2}<\|(\overline{h}\overline{u})^{n}_{p}\|_{\sim}^{2},
      \end{equation}
       \begin{equation}\label{48}
      \|(\overline{h}\overline{v})^{n}_{p}-\frac{k}{2}[\delta^{4}_{x}E_{3p}^{n}-\frac{k}{4}\delta^{2}_{x}((\overline{u}\overline{v})_{p}^{n}\delta^{3\mp}_{x}E^{n}_{1p}+
      (\overline{h}\overline{v})_{p}^{n}\delta^{3\mp}_{x}E^{n}_{2p}+(\overline{h}\overline{u})^{n}_{p}\delta^{3\mp}_{x}E^{n}_{3p})]\|_{\sim}^{2}<
      \|(\overline{h}\overline{v})^{n}_{p}\|_{\sim}^{2}.
      \end{equation}

      Indeed, summing inequalities $(\ref{46})$-$(\ref{48})$ side by side gives estimate $(\ref{45a})$.\\

      Expanding the left side of estimate $(\ref{46})$ and after simplification, we obtain
       \begin{equation*}
      \frac{k^{2}}{4}\|\delta^{4}_{x}E_{1p}^{n}-\frac{k}{4}\delta^{2}_{x}(\overline{u}^{n}_{p}\delta^{3\mp}_{x}E^{n}_{1p}+\overline{h}^{n}_{p}\delta^{3\mp}_{x}E^{n}_{2p})\|_{\sim}^{2}
      -k\left(\overline{h}^{n}_{p},\delta^{4}_{x}E_{1p}^{n}-\frac{k}{4}\delta^{2}_{x}(\overline{u}^{n}_{p}\delta^{3\mp}_{x}E^{n}_{1p}+\overline{h}^{n}_{p}
      \delta^{3\mp}_{x}E^{n}_{2p})\right)_{\sim}<0,
      \end{equation*}
      which is equivalent to
        \begin{equation*}
      k\|\delta^{4}_{x}E_{1p}^{n}-\frac{k}{4}\delta^{2}_{x}(\overline{u}^{n}_{p}\delta^{3\mp}_{x}E^{n}_{1p}+\overline{h}^{n}_{p}\delta^{3\mp}_{x}E^{n}_{2p})\|_{\sim}^{2}
      <4\left(\overline{h}^{n}_{p},\delta^{4}_{x}E_{1p}^{n}-\frac{k}{4}\delta^{2}_{x}(\overline{u}^{n}_{p}\delta^{3\mp}_{x}E^{n}_{1p}+\overline{h}^{n}_{p}
      \delta^{3\mp}_{x}E^{n}_{2p})\right)_{\sim}.
      \end{equation*}

      Applying the Cauchy-Schwarz inequality and simplifying, this implies
      \begin{equation}\label{49}
      k\|\delta^{4}_{x}E_{1p}^{n}-\frac{k}{4}\delta^{2}_{x}(\overline{u}^{n}_{p}\delta^{3\mp}_{x}E^{n}_{1p}+\overline{h}^{n}_{p}\delta^{3\mp}_{x}E^{n}_{2p})\|_{\sim}
      <4\|\overline{h}^{n}_{p}\|_{\sim}.
      \end{equation}

      Of course the aim of this paper is to give a general picture of necessary condition of stability. Since the formulae can become quite heavy, for the convenient of writing and for small values of the time step $k$, it holds
      \begin{equation}\label{50}
      \delta^{4}_{x}E_{1p}^{n}\approx \delta^{4}_{x}E_{1p}^{n}-\frac{k}{4}\delta^{2}_{x}(\overline{u}^{n}_{p}\delta^{3\mp}_{x}E^{n}_{1p}
      +\overline{h}^{n}_{p}\delta^{3\mp}_{x}E^{n}_{2p}).
      \end{equation}

      However, the truncation of the infinitesimal term $\frac{k}{4}\delta^{2}_{x}(\overline{u}^{n}_{p}\delta^{3\mp}_{x}E^{n}_{1p}+\overline{h}^{n}_{p}\delta^{3\mp}_{x}
      E^{n}_{2p})$, does not compromise the result on stability. Utilizing approximation $(\ref{50})$, estimate $(\ref{49})$ becomes
       \begin{equation}\label{51}
      k\|\delta^{4}_{x}E_{1p}^{n}\|_{\sim} \leq 4\|\overline{h}^{n}_{p}\|_{\sim}.
      \end{equation}

      It's not hard to observe that using the linear operator $\delta^{4}_{x}$ defined in relation $(\ref{9a})$, the term $\delta^{4}_{x}E_{1p}^{n}$ can be expressed in the matrix form as
      \begin{equation}\label{52}
      \delta^{4}_{x}E_{1p}^{n}=\frac{1}{12\Delta x}\underset{A}{\underbrace{\begin{bmatrix}
        0 & 8 & -1 & 0 & \cdots & 0 \\
        -8 & 0 & 8 & -1 & \ddots & \vdots \\
        1 & -8 & 0 & 8 & \ddots & 0 \\
        0 & 1 & \ddots & \ddots & \ddots & -1 \\
        \vdots & \ddots & \ddots & \ddots & \ddots & 8 \\
        0 & \ldots & 0 & 1 & -8 & 0 \\
       \end{bmatrix}}}\underset{E_{1p}^{n}}{\underbrace{\begin{bmatrix}
                     (\overline{h}\overline{u})^{n}_{2p} \\
                     \text{\,}\\
                     (\overline{h}\overline{u})^{n}_{3p} \\
                     \text{\,}\\
                     \vdots \\
                     \vdots \\
                     (\overline{h}\overline{u})^{n}_{M_{x}-2,p} \\
                   \end{bmatrix}}}=\frac{1}{12\Delta x}AE_{1p}^{n}.
     \end{equation}

     So,
      \begin{equation}\label{53}
      \|\delta^{4}_{x}E_{1p}^{n}\|_{\sim}\leq \frac{1}{12\Delta x}\|AE_{1p}^{n}\|_{\sim}\leq \frac{1}{12\Delta x}\||A|\|\|E_{1p}^{n}\|_{\sim}=
      \frac{1}{12\Delta x}\||A|\|\|\overline{h}^{n}_{p}\overline{u}^{n}_{p}\|_{\sim}\leq \frac{1}{12\Delta x}\||A|\|\|\overline{h}^{n}_{p}\|_{\sim}
      \|\overline{u}^{n}_{p}\|_{\sim},
      \end{equation}
      where $\||\cdot|\|$ denotes the matrix norm defined by equation $(\ref{12a})$ and associated with the norm $\|\cdot\|_{\sim}$. Since $A^{t}=-A$, then $A^{t}A=AA^{t}$. So, $A$ is a pentadiagonal matrix which is normal. It follows from Lemma $\ref{l4}$ that $0<\||A|\|=\rho_{\max}(A)\leq 2(1+8)=18$. This fact, together with estimate $(\ref{53})$ result in
      \begin{equation*}
      \|\delta^{4}_{x}E_{1p}^{n}\|_{\sim}\leq \frac{1}{12\Delta x}\rho_{\max}(A)\|\overline{h}^{n}_{p}\|_{\sim}\|\overline{u}^{n}_{p}\|_{\sim}.
      \end{equation*}

      Utilizing this, a condition for which inequality $(\ref{51})$ holds are the values of $k$ that satisfy
      \begin{equation*}
      \frac{k}{12\Delta x}\rho_{\max}(A)\|\overline{h}^{n}_{p}\|_{\sim}\|\overline{u}^{n}_{p}\|_{\sim}\leq 4\|\overline{h}^{n}_{p}\|_{\sim}.
      \end{equation*}

      Multiplying both sides of this estimate by $\|\overline{h}^{n}_{p}\|_{\sim}^{-1}$ and using the definition of the norm, $\|\cdot\|_{\sim}$, we obtain
      \begin{equation*}
      \frac{k}{12\Delta x}\rho_{\max}(A)\left(\underset{l=2}{\overset{M_{x}-2}\sum}(\overline{u}^{n}_{lp})^{2}\right)^{\frac{1}{2}}\leq 4.
      \end{equation*}

       Squared both sides of this estimate yields
       \begin{equation}\label{57}
      \frac{k^{2}}{(12\Delta x)^{2}}\rho_{\max}(A)^{2}\underset{l=2}{\overset{M_{x}-2}\sum}(\overline{u}^{n}_{lp})^{2}\leq 16.
      \end{equation}

      Summing up inequality $(\ref{57})$, for $p=2,3,...,M_{y}-2$, multiplying both sides of the new estimate by $\Delta x\Delta y$ and utilizing the norm, $\|\cdot\|_{0}$, defined in relation $(\ref{10})$, this gives
       \begin{equation*}
      \frac{k^{2}}{(12\Delta x)^{2}}\rho_{\max}(A)^{2}\|\overline{u}^{n}\|_{0}^{2}\leq 16\Delta x\Delta y\underset{p=2}{\overset{M_{y}-2}\sum}1\leq \frac{16}{M_{x}-3}\Delta x\Delta y\underset{p=2}{\overset{M_{y}-2}\sum}\underset{l=2}{\overset{M_{x}-2}\sum}1=\frac{16}{M_{x}-3}\|\overline{\beta}\|_{0}^{2},
      \end{equation*}
      where $\overline{\beta}=1\in L^{2}(\Omega)$. The square root of this estimate provides
       \begin{equation}\label{58}
      \frac{k}{12\Delta x}\rho_{\max}(A)\|\overline{u}^{n}\|_{0}\leq \frac{4}{\sqrt{M_{x}-3}}\|\overline{\beta}\|_{0}.
      \end{equation}

      Taking the maximum over $n$, for $n=0,1,...,N$, using the norm $\||\cdot|\|_{0,\infty}$, defined by equation $(\ref{11})$, and rearranging terms, inequality $(\ref{58})$ results in
      \begin{equation}\label{59}
       k\leq \frac{48\|\overline{\beta}\|_{0}\Delta x}{\rho_{\max}(A)\||\overline{u}|\|_{0,\infty}\sqrt{M_{x}-3}},
      \end{equation}
      where $0<\rho_{\max}(A)\leq18$, and $\overline{\beta}=1\in L^{2}(\Omega)$.\\

      Now, utilizing estimate $(\ref{47})$, it holds
      \begin{equation*}
      \|(\overline{h}\overline{u})^{n}_{p}-\frac{k}{2}[\delta^{4}_{x}E_{2p}^{n}-\frac{k}{4}\delta^{2}_{x}((\overline{u}^{2}+g\overline{h})_{p}^{n}\delta^{3\mp}_{x}E^{n}_{1p}+
      2(\overline{h}\overline{v})^{n}_{p}\delta^{3\mp}_{x}E^{n}_{2p})]\|_{\sim}^{2}<\|(\overline{h}\overline{u})^{n}_{p}\|_{\sim}^{2},
      \end{equation*}

        Expanding this estimate, applying the Cauchy-Schwarz inequality, simplifying and rearranging terms to obtain
       \begin{equation}\label{60}
      k\|\delta^{4}_{x}E_{2p}^{n}-\frac{k}{4}\delta^{2}_{x}((\overline{u}^{2}+g\overline{h})_{p}^{n}\delta^{3\mp}_{x}E^{n}_{1p}
      +2(\overline{h}\overline{v})^{n}_{p}\delta^{3\mp}_{x}E^{n}_{2p})]\|_{\sim}<4\|(\overline{h}\overline{u})^{n}_{p}\|_{\sim}\leq 4\|\overline{h}^{n}_{p}\|_{\sim}
      \|\overline{u}^{n}_{p}\|_{\sim}.
      \end{equation}

      For the values of time step $k$ small enough, the following approximation is satisfied
      \begin{equation}\label{61}
       \delta^{4}_{x}E_{2p}^{n}-\frac{k}{4}\delta^{2}_{x}((\overline{u}^{2}+g\overline{h})_{p}^{n}\delta^{3\mp}_{x}E^{n}_{1p}
      +2(\overline{h}\overline{v})^{n}_{p}\delta^{3\mp}_{x}E^{n}_{2p}) \approx \delta^{4}_{x}E_{2p}^{n}.
      \end{equation}

      For the reason mentioned above, the truncation of the infinitesimal term: $\frac{k}{4}\delta^{2}_{x}((\overline{u}^{2}+g\overline{h})_{p}^{n}\delta^{3\mp}_{x}E^{n}_{1p}
      +2(\overline{h}\overline{v})^{n}_{p}\delta^{3\mp}_{x}E^{n}_{2p})$, does not compromise the result on stability. Using approximation $(\ref{61})$, estimate $(\ref{60})$ becomes
       \begin{equation}\label{62}
      k\|\delta^{4}_{x}E_{2p}^{n}\|_{\sim} \leq 4\|\overline{h}^{n}_{p}\|_{\sim}\|\overline{u}^{n}_{p}\|_{\sim}.
      \end{equation}

      It follows from equation $(\ref{45})$ that $E_{2p}^{n}=(\overline{u}_{p}^{2,n}+\frac{1}{2}g\overline{h}_{p}^{n})\overline{h}^{n}_{p}$. Utilizing equation $(\ref{52})$ and replacing the term $E_{1p}^{n}$ with $E_{2p}^{n}$, straightforward calculations result in

     So,
      \begin{equation*}
      \|\delta^{4}_{x}E_{2p}^{n}\|_{\sim}\leq \frac{1}{12\Delta x}\|AE_{2p}^{n}\|_{\sim}\leq \frac{1}{12\Delta x}\||A|\|\|(\overline{u}_{p}^{2,n}+\frac{1}{2}g\overline{h}_{p}^{n})\overline{h}^{n}_{p}\|_{\sim}\leq \frac{\rho_{\max}(A)}{12\Delta x}\|\overline{h}^{n}_{p}\|_{\sim}\|\overline{u}_{p}^{2,n}+\frac{1}{2}g\overline{h}_{p}^{n}\|_{\sim}.
      \end{equation*}

      Since we are interested in a necessary condition on the time step $k$ for stability, the values of $k$ which satisfy the following estimate will ensure inequality $(\ref{62})$.
      \begin{equation*}
      \frac{k}{12\Delta x}\rho_{\max}(A)\|\overline{h}^{n}_{p}\|_{\sim}\|\overline{u}_{p}^{2,n}+\frac{1}{2}g\overline{h}_{p}^{n}\|_{\sim}\leq
      4\|\overline{h}^{n}_{p}\|_{\sim}\|\overline{u}^{n}_{p}\|_{\sim}.
      \end{equation*}

      This is equivalent to
      \begin{equation*}
      \frac{k}{12\Delta x}\rho_{\max}(A)\|\overline{u}_{p}^{2,n}+\frac{1}{2}g\overline{h}_{p}^{n}\|_{\sim}\leq 4\|\overline{u}^{n}_{p}\|_{\sim}.
      \end{equation*}

       Taking the square in both sides of this estimate, multiplying the obtained inequality by $\Delta x\Delta y$, summing up, from $p=2,3,...,M_{y}-2$, and utilizing the norm, $\|\cdot\|_{0}$, defined in equation $(\ref{10})$, to get
       \begin{equation*}
      \frac{k^{2}}{(12\Delta x)^{2}}\rho_{\max}(A)^{2}\|\overline{u}^{2,n}+\frac{1}{2}g\overline{h}^{n}\|_{0}^{2}\leq 16 \|\overline{u}^{n}\|_{0}^{2},
      \end{equation*}

       The square root in both sides of this inequality yields
       \begin{equation*}
      \frac{k}{12\Delta x}\rho_{\max}(A)\|\overline{u}^{2,n}+\frac{1}{2}g\overline{h}^{n}\|_{0}\leq 4\|\overline{u}^{n}\|_{0}.
      \end{equation*}

      Taking the maximum over $n$, for $0\leq n\leq N$, rearranging terms, and utilizing the norm, $\||\cdot|\|_{0,\infty}$, defined by equation $(\ref{11})$, result in
      \begin{equation}\label{63}
       k\leq \frac{48\Delta x}{\rho_{\max}(A)}\frac{\|\overline{u}\|_{0,\infty}}{\|\overline{u}^{2,n}+\frac{1}{2}g\overline{h}\|_{0,\infty}},
      \end{equation}
      where $0<\rho_{\max}(A)\leq18$.\\

      In a similar manner, one easily shows that a necessary condition for which inequality $(\ref{48})$ holds is defined as
      \begin{equation}\label{64}
       k\leq \frac{48\|\overline{\beta}\|_{0}\Delta x}{\rho_{\max}(A)\||\overline{u}|\|_{0,\infty}\sqrt{M_{x}-3}},
      \end{equation}
      where $0<\rho_{\max}(A)\leq18$, and $\overline{\beta}=1\in L^{2}(\Omega)$. A combination of estimates $(\ref{59})$ and $(\ref{63})$-$(\ref{64})$, completes the proof of Theorem $\ref{t1}$.
      \end{proof}

      \section{Numerical experiments}\label{sec4}

      This section simulates a time-split linearized explicit/implicit numerical method $(\ref{31})$-$(\ref{40})$ applied to two-dimensional hydrodynamic flow equations $(\ref{1})$ subjects to initial-boundary conditions $(\ref{2})$-$(\ref{3b})$. Two numerical examples described in \cite{48tdc} are carried out to confirm the theoretical studies and to demonstrate the utility and efficiency of the new algorithm $(\ref{31})$-$(\ref{40})$. Furthermore, the developed computational approach is used to investigate and forecast the practical case of floods observed in Cameroon far north region from the second half of July up to the second half of October $2024$. Due to heavy rains, series of floods are recorded in Logone-et-Chari and Mayo-Danay divisions. The Logone sources are delineated in the northern Cameroon, western Central African Republic and southern Chad. The Logone river or Logon considers two main tributaries so called: the Mbere river also known as western Logone which is located in the east Cameroon and the Pende river or eastern Logone in the prefecture Ouham-Pende and located in the Central African Republic \cite{2lc} while the Chari river is a $1400km$ long river flowing in Central African. More precisely, it flows from the Central African Republic through Chad into lake Chad, the Cameroon border from N'djamena and is joined by the Logone river, its western and principal tributary (see Figure \ref{fig2}, \textbf{Figure 2.v}). From $1951$ till $1984$, it has been observed a flow of the river in a town (Bongor) which is in Chad downstream of the union with the Pende about $450km$ above the mouth into the Chari. The average annual flow observed during this period is $492m^{3}/s$ fed by an area of about $73.7km^{2}$, approximately $94.5\%$ of the total catchment area of the river. The river overflowed also affected the communities in Mayo-Danay department. From 17-18 September 2013, floods caused a rupture of the dam along the Logon in the Dougui town and Kai Kai district in the far north region of Cameroon. Additionally, a second rupture in the dam $4km$ from the first rupture has started flooding the area on $27$ September $2013$, and approximately $9000$ persons were displaced \cite{4lc}. In the numerical simulations of floods from July to October $2024$, the initial and boundary conditions are obtained from the minimum, average and maximum of the annual discharges provided by GRDC station information in $2013$ \cite{3lc} together with other data. In addition, the study considers Logone-et-Chari and Mayo-Danay which deal with one river and one lake, so called Logone river (see Figure \ref{fig2}, \textbf{Figure 2.v}) and Guere lake (see Figure \ref{fig2}, \textbf{Figure 2.vi}). Specifically, the location of the catchment is the Logone basin. The GRDC catchment area is approximately $73700km^{2}$ with a GRDC interstation area of $25430km^{2}$ while the GRDC station runoff and GRDC interstation runof are $210mm/year$ and $-3mm/year$, respectively. The minimum and maximum discharges are $16m^{3}/s$ and $2420m^{3}/s$, respectively, whereas the mean discharge and the mean interstation one equal $492m^{3}/s$ and $-2m^{3}/s$, respectively. Furthermore, the distance to the next downstream station is $445km$ while the distance to the basin outlet equals $579km$ and the length of upstream main stem is $511km$. The water depth at $t=0$ is specified as the initial condition in the downstream. The velocities $u$ and $v$ at the inflow boundary are considered to be zero at any time while the initial velocities are obtained from the following three cases: minimum annual discharge, average annual discharge and maximum annual discharge observed in $2013$. It's worth mentioning that the three considered cases of initial condition (minimum, average and maximum annual discharges observed in $2013$) allow to assess and predict both water depth and velocities during different seasons and thus, investigating the minimum and maximum flood extents which represent the main tools to forecast several potential inundated locations in Cameroon far north region.\\

      \subsection*{Thacker's analytical solutions}
      Thacker developed a wide set of analytical solutions for two-dimensional shallow water models with moving boundaries including the curiolis force \cite{48tdc}. Since these solutions are not discontinuous and deal with both bed slope and wetting/drying with two dimensional effects, they are suitable to verify the efficiency and accuracy of numerical schemes. To verify the the stability and convergence order of the proposed technique under the time step requirement given by estimate $(\ref{43})$ in Theorem $\ref{t1}$. We assume that the space steps $\Delta x$ and $\Delta y$ are equal and vary in the range: $3^{-l},$ for $l=2,\cdots,6,$ while the time step $\Delta t\in\{3^{-l},\text{\,\,}l=4,\cdots,8\}$. We compute the error: $e_{(\cdot)}=\overline{w}-w$, where $w\in\{h,u,v\}$ and $\overline{w}\in\{\overline{h},\overline{u},
      \overline{v}\}$, utilizing the $L^{\infty}$-norm, $\|\cdot\|_{0,\infty}$, defined in relation $(\ref{11})$. Furthermore, the convergence order, $CO(\Delta x)$, in space of the proposed computational technique is estimated using the formula
       \begin{equation*}
        CO(\Delta x)=\frac{\log\left(\frac{\||e_{(3\Delta x)}|\|_{0,\infty}}{\||e_{(\Delta x)}|\|_{0,\infty}}\right)}{\log(3)},
       \end{equation*}
        where, $e_{(\Delta x)}$ and $e_{(3\Delta x)}$ are the spatial errors associated with the mesh sizes $\Delta x$ and $3\Delta x$, respectively, whereas the temporal convergence rate, $CO(k)$, is calculated as follows
       \begin{equation*}
        CO(k)=\frac{\log\left(\frac{\||e_{(3k)}|\|_{0,\infty}}{\||e_{(k)}|\|_{0,\infty}}\right)}{\log(3)},
       \end{equation*}
        where $e_{(3k)}$ and $e_{(k)}$ denote the errors in time corresponding to time steps $3k$ and $k$, respectively. Lastly, the numerical computations are performed using MATLAB R$2007b$.\\

            $\bullet$ \textbf{Example 1 (Radially-symmetrical paraboloid)}\cite{48tdc}. Suppose that $\Omega=[0,l]\times[0,l]$ is the fluid region and $T=\frac{6\pi}{\omega}$ is the final time, where $l=4m$, $\omega=\frac{\sqrt{8gh_{0}}}{d}$ is the frequency, $h_{0}=0.1m$ denotes the water depth at the central point of the domain for a zero elevation, $d=1m$ represents the distance from the central point to the zero elevation of the shoreline (see Figure $\ref{fig1}$, \textbf{Figure 1.iv}) and $g=10m/s^{2}$ is the gravitational acceleration. The solution is periodic with no friction and the topography is a paraboloid of revolution defined as
            \begin{equation}\label{64}
            z(x,y)=h_{0}(r^{2}d^{-2}-1),
            \end{equation}
            where $r^{2}=(x-\frac{l}{2})^{2}+(y-\frac{l}{2})^{2}$, for $(x,y)\in[0,l]\times[0,l]$. The exact solution is defined as
            \begin{eqnarray*}
              h(x,y,t) &=& h_{0}\left[\frac{\sqrt{1-R^{2}}}{1-R\cos(\omega t)}-\frac{r^{2}}{d^{2}}\left(\frac{1-R^{2}}{(1-R\cos(\omega t))^{2}}-1\right)-1\right]-z(x,y), \\
              u(x,y,t) &=& \frac{\omega R}{2(1-R\cos(\omega t))}(x-\frac{l}{2})\sin(\omega t),\\
              v(x,y,t) &=& \frac{\omega R}{2(1-R\cos(\omega t))}(y-\frac{l}{2})\sin(\omega t),
            \end{eqnarray*}
         for every $(x,y,t)\in[0,l]\times[0,l]\times[0,T]$, where $R=\frac{d^{2}-r_{0}^{2}}{d^{2}+r_{0}^{2}}$, and $r_{0}=0.8m$ is the distance from the central point to the point where the shoreline is initially located (see Figure $\ref{fig1}$, \textbf{Figure 1.iv}). The initial and boundary conditions are directly obtained from the analytical solution.\\
         \text{\,}\\
         \textbf{Table 1.} $\label{T1}$ Analysis of convergence rate $CO(\Delta x)$ for the proposed time-split explicit/implicit approach with varying space step $\Delta x=\Delta y$ and time step $k=\Delta t$, satisfying restriction $(\ref{43})$, with $\gamma=18$.
          \begin{equation*}
          \begin{array}{c c}
          \text{\,Developed approach,\,\,where\,\,}k=3^{-6}& \\
           \begin{tabular}{ccccccc}
            \hline
            $\Delta x$ &  $\||\overline{h}-h|\|_{0,\infty}$ & $CO(\Delta x)$ & $\||\overline{u}-u|\|_{0,\infty}$ & $CO(\Delta x)$ & $\||\overline{v}-v|\|_{0,\infty}$ & $CO(\Delta x)$\\
             \hline
            $3^{-2}$ &  $2.0413\times10^{-2}$ &  ---  &  $7.9837\times10^{-2}$  &  ---   &  $6.5029\times10^{-2}$ & --- \\

            $3^{-3}$ &  $2.8230\times10^{-4}$ & 3.8967 & $1.1250\times10^{-4}$  & 3.8796 &  $9.1576\times10^{-4}$ & 3.8802 \\

            $3^{-4}$ &  $4.3440\times10^{-6}$ & 3.7995 & $1.5706\times10^{-6}$  & 3.8809 &  $1.2923\times10^{-5}$ & 3.8783 \\

            $3^{-5}$ &  $5.4240\times10^{-8}$ & 3.9897 & $1.9819\times10^{-8}$  & 3.9801 &  $1.6352\times10^{-7}$ & 3.9776 \\

            $3^{-6}$ & $6.1477\times10^{-10}$ & 4.0778 & $2.4538\times10^{-10}$  & 3.9974 & $1.9974\times10^{-9}$ & 4.0097 \\
            \hline
          \end{tabular} &
          \end{array}
          \end{equation*}
          \text{\,}\\
          \textbf{Table 2.} $\label{T2}$ Analysis of convergence order $CO(k)$ of the new algorithm with varying mesh space $\Delta x=\Delta y$ and time step $k=\Delta t$,  satisfying condition $(\ref{43})$, with $\gamma=1$.
          \begin{equation*}
          \begin{array}{c c}
          \text{\,Proposed computational scheme,\,\,where\,\,}\Delta x=\Delta y=3^{-5}& \\
           \begin{tabular}{ccccccc}
            \hline
            $k$ &  $\||\overline{h}-h|\|_{0,\infty}$ & $CO(k)$ & $\||\overline{u}-u|\|_{0,\infty}$ & $CO(k)$ & $\||\overline{v}-v|\|_{0,\infty}$ & $CO(k)$\\
             \hline
            $3^{-4}$ &  $2.7513\times10^{-3}$ &  ---  &  $1.9271\times10^{-2}$  &  ---    &  $3.0011\times10^{-2}$ & ---\\

            $3^{-5}$ &  $3.4508\times10^{-4}$ & 1.8897 & $2.3891\times10^{-3}$  & 1.9003  &  $3.8300\times10^{-3}$ &  1.8739\\

            $3^{-6}$ &  $3.9082\times10^{-5}$ & 1.9826 & $2.5832\times10^{-4}$  & 1.9657  &  $4.8091\times10^{-4}$ &  1.8887\\

            $3^{-7}$ &  $4.2067\times10^{-6}$ & 2.0289 & $2.6169\times10^{-5}$  & 2.0841  &  $5.7642\times10^{-5}$ & 1.9310 \\

            $3^{-8}$ &  $4.3649\times10^{-7}$ & 2.0623 & $2.9378\times10^{-6}$  & 1.9906  &  $6.5671\times10^{-6}$ &  1.9772\\
            \hline
          \end{tabular} &
          \end{array}
          \end{equation*}
          \text{\,}\\
          Tables 1 $\&$ 2 suggest that the proposed approach is second-order accurate in time and spatial fourth-order convergent.\\
          \text{\,}\\
          \text{\,}\\
           $\bullet$ \textbf{Example 2 (Planar surface in a paraboloid)}\cite{48tdc}. Let $\Omega=[0,l]\times[0,l]$ be the fluid region and $[0,T]=[0,\frac{6\pi}{\omega}]$ be the time interval, where $l=4m$, $\omega=\frac{\sqrt{2gh_{0}}}{d}$ is the frequency, $h_{0}=0.1m$, $d=1m$ and $g=10m/s^{2}$ is the acceleration of gravity. In this example, the moving shoreline represents a circle (see Figure $\ref{fig1}$, \textbf{Figure 1.iv}) and the topography is defined by equation $(\ref{64})$. The analytical solution is given by
            \begin{eqnarray*}
              h(x,y,t) &=& \frac{\eta h_{0}}{d^{2}}\left[2(x-\frac{l}{2})\cos(\omega t)+2(y-\frac{l}{2})\sin(\omega t)-\eta\right]-z(x,y), \\
              u(x,y,t) &=& -\eta\omega\sin(\omega t),\\
              v(x,y,t) &=& \eta\omega\cos(\omega t),
            \end{eqnarray*}
         for every $(x,y,t)\in[0,l]\times[0,l]\times[0,T]$, where $\eta=0.5$. Furthermore, both initial and boundary conditions are determined from the exact solution.\\

         \textbf{Table 3.} $\label{T3}$ Analysis of convergence rate $CO(\Delta x)$ for the new computational approach with varying space step $\Delta x=\Delta y$ and time step $k=\Delta t$, satisfying requirement $(\ref{43})$, with $\gamma=12$.
          \begin{equation*}
          \begin{array}{c c}
          \text{\,Developed approach,\,\,where\,\,}k=3^{-5}& \\
           \begin{tabular}{ccccccc}
            \hline
            $\Delta x$ &  $\||\overline{h}-h|\|_{0,\infty}$ & $CO(\Delta x)$ & $\||\overline{u}-u|\|_{0,\infty}$ & $CO(\Delta x)$ & $\||\overline{v}-v|\|_{0,\infty}$ & $CO(\Delta x)$\\
             \hline
            $3^{-2}$ &  $8.1866\times10^{-2}$ &   --- &  $2.0731\times10^{-1}$  & ---      &  $2.8608\times10^{-1}$ &  \\

            $3^{-3}$ &  $1.1548\times10^{-3}$ & 3.8787 & $3.2598\times10^{-3}$  & 3.7798   &  $4.5029\times10^{-3}$ & 3.7789 \\

            $3^{-4}$ &  $1.6095\times10^{-5}$ & 3.8896 & $4.8026\times10^{-5}$  & 3.8391   &  $6.6333\times10^{-5}$ & 3.8392\\

            $3^{-5}$ &  $2.1407\times10^{-7}$ & 3.9322 & $6.7647\times10^{-7}$  & 3.8800   &  $9.3299\times10^{-7}$ & 3.8813\\

            $3^{-6}$ &  $2.3474\times10^{-9}$ & 4.1079 & $8.6836\times10^{-9}$  & 3.9645   &  $1.1979\times10^{-8}$ & 3.9643\\
            \hline
          \end{tabular} &
          \end{array}
          \end{equation*}
          \text{\,}\\
          \textbf{Table 4.} $\label{T4}$ Analysis of convergence order $CO(k)$ of the developed numerical technique with varying mesh space $\Delta x=\Delta y$ and time step $k=\Delta t$, satisfying estimate $(\ref{43})$, with $\gamma=3$.
          \begin{equation*}
          \begin{array}{c c}
          \text{\,Proposed computational scheme,\,\,where\,\,}\Delta x=\Delta y=3^{-5}& \\
           \begin{tabular}{ccccccc}
            \hline
            $k$ &  $\||\overline{h}-h|\|_{0,\infty}$ & $CO(k)$ & $\||\overline{u}-u|\|_{0,\infty}$ & $CO(k)$ & $\||\overline{v}-v|\|_{0,\infty}$ & $CO(k)$\\
             \hline
            $3^{-4}$ &  $3.1030\times10^{-2}$ &  ---  &  $2.8755\times10^{-2}$  & ---     &  $2.9827\times10^{-2}$ & ---\\

            $3^{-5}$ &  $3.9636\times10^{-3}$ & 1.8731 & $3.5750\times10^{-3}$  & 1.8977  &  $3.7104\times10^{-3}$ &  1.8972\\

            $3^{-6}$ &  $4.6855\times10^{-4}$ & 1.9436 & $4.3548\times10^{-4}$  & 1.9163  &  $4.3041\times10^{-4}$ &  1.9608\\

            $3^{-7}$ &  $5.0941\times10^{-5}$ & 2.0198 & $4.8381\times10^{-5}$  & 2.0001  &  $4.8171\times10^{-5}$ &  1.9934 \\

            $3^{-8}$ &  $4.8719\times10^{-6}$ & 2.1365 & $5.4512\times10^{-6}$  & 1.9873  &  $4.7949\times10^{-6}$ &  2.1001\\
            \hline
          \end{tabular} &
          \end{array}
          \end{equation*}
          \text{\,}\\
          The numerical solutions provided by the new technique $(\ref{31})$-$(\ref{40})$ obtained from $1$ to $50$ iterations, respectively, are displayed in Figures $\ref{fig3}$-$\ref{fig4}$. Various time steps $k=3^{-4},\cdots,3^{-8}$, obtained from the stability requirement $(\ref{43})$ as the steady flow cases and space steps $\Delta x=\Delta y= 3^{-2},\cdots,3^{-6},$ in the mesh grids are used. Figures $\ref{fig3}$-$\ref{fig4}$ indicate that both water depth and velocities wave propagate with almost a perfectly value at different positions while the associated errors tend to zero. Thus, the approximate solutions cannot grow with time whenever the time step should satisfy restriction $(\ref{43})$. Additionally, \textbf{Tables 1-4} suggest that the errors associated with both water depth and velocities are second-order in time and spatial fourth-order. This shows that the developed time-split linearized explicit/implicit approach $(\ref{31})$-$(\ref{40})$ is temporal second-order accurate and fourth-order convergent in space. Finally, \textbf{Tables 1-4} and Figures $\ref{fig3}$-$\ref{fig4}$ suggest that the numerical solutions do not increase with time and converge to the analytical one. More specifically, they indicate that stability for the constructed approach $(\ref{31})$-$(\ref{40})$ is subtle. It is not unconditionally unstable, but stability depends on the parameter $\Delta x$ along with the time step $k$.

        \subsection*{Floods analysis in the Logone-et-Chari subdivision}
        The mathematical model for this overland flow is as follows: the study case is a uniform catchment so called, the logone basin having a length of $1000km$, an elevation of $364m$, an average annual discharge equals $492m^{3}/s$ and whose the area is approximately $78000km^{2}$ which can be approximated with horizontal dimensions $80km\times1000km$. In addition, the distance to the next downstream station is $445km$ whereas the distance to the basin outlet equals $579km$ and the length of upstream main stem is $511km$. The surface roughness and shear stress are assumed invariant in space and time. The water depth at $t=0$ is specified as the initial condition in the downstream. The initial water depth on the upstream side on the dam for both wet and dry beds equals $10^{-1}m$, whereas the initial flow depth in the downstream with respect to wet and dry beds are assumed to be $1.76\times10^{-1}m$ and $1.4\times10^{-3}m$, respectively. The velocities $u$ and $v$ at the inflow boundary are considered to be zero at any time while the initial velocities are obtained from the following three cases: minimum annual discharge ($q_{x}=q_{y}=16m^{3}/s$), average annual discharge ($q_{x}=q_{y}=492m^{3}/s$) and maximum annual discharge ($q_{x}=q_{y}=2420m^{3}/s$) observed in 2013, using equations $u(x,y,0)=\frac{q_{x}(x,y,0)}{h_{0}}$ and $v(x,y,0)=\frac{q_{y}(x,y,0)}{h_{0}}$, where $q_{x}$ and $q_{y}$ represent the discharges in the $x$-direction and $y$-direction, respectively, while $h_{0}=h(x,y,0)\in\{1.76\times10^{-1},1.4\times10^{-3}\}$, denotes the downstream initial water depth. The bed slops are determined utilizing equations $S_{0_{x}}=\frac{\partial z}{\partial x}=2h_{0}(x-40)$ and $S_{0_{y}}=\frac{\partial z}{\partial y}=2h_{0}(y-500)$, in the $x$-direction and $y$-direction, respectively, where $z$ is defined by equation $(\ref{64})$. We use the following values in the simulations: $c_{0}=40m^{1/2}/s$ (dimensional constant), $\overline{n}=0.025s/m^{1/3}$ (manning's number) and $g=10m/s^{2}$ (acceleration of gravity).\\

        The mesh sizes in $x$-direction and $y$-direction are $\Delta x=8.89$ and $\Delta y=12.36$, respectively, while the time step $k=0.33$. The period of floods is represented by the time interval $[0,\text{\,}T]=[0,\text{\,}3]$ (time in month), which corresponds from the second half of July $2024$ to the second half of October $2024$. $l_{1}=80km$ and $l_{2}=1000km$ are the rod interval lengths in the $x$- and $y$- directions, respectively. The water depth and velocities provided by the new computational approach $(\ref{31})$-$(\ref{40})$ during the period of floods are displayed in Figure $\ref{fig5}$. For initial water depth $h_{0}=1.76\times10^{-1}m$ (with respect to dry bed) and initial velocities $u_{0}=v_{0}=90.91m/s$ (obtained from minimum discharge), the first figure (in Figure $\ref{fig5}$) shows that the water depth and velocities waves propagate with perfect values and attain their maximum: $h_{\max}=2\times10^{2}m$, $u_{\max}=1.696\times10^{6}m/s$ and $v_{\max}=1.727\times10^{6}m/s$ on $21$ September $2024$ which corresponds to a duration of two months and six days. Moreover, the peak of inundations is observed on September $21$, $2024$. In addition, for $h_{0}=1.4\times10^{-3}m$ (with respect to wet bed) and $u_{0}=v_{0}=11430m/s$ (obtained from minimum discharge), the fourth figure (see Figure $\ref{fig5}$) indicates that water depth and velocities waves move with almost perfect constant values until September $21$ (corresponding to two months and six days) whereas both water depth and $y$-direction velocity exponentially increase and tend to $3.3\times10^{129}m$ and $0.5\times10^{129}m/s$, respectively, on the time interval $[2.3,\text{\,}2.9)$. Additionally, for large values of initial velocities, the other figures provided in Figure $\ref{fig5}$ suggest that the approximate solutions start to destroy after a fixed date different from September $21$, $2024$. These observations indicate that small values of initial water depth or large values of initial velocities must cause small numerical oscillations which can destroy the computed solutions. Specifically, suitable initial conditions and time steps that satisfy the stability limitation $(\ref{43})$, should generate efficient approximate solutions which help to assess and predict both water depth and velocities during different seasons and thus, investigating the minimum and maximum flood extents which represent the main tools to forecast several potential inundated locations in far north region of cameroon. However, condition $(\ref{43})$ deals with the computed solutions. Thus, physical insight must be used when the stability limitation $(\ref{43})$ of the new computational technique is investigated.

       \section{General conclusions and future works}\label{sec5}
        This paper has proposed a time-split linearized explicit/implicit approach for solving a two-dimensional shallow water model. A suitable time step restriction for stability of the developed computational technique is deeply analyzed using the $L^{\infty}(0,T;\text{\,}L^{2})$-norm while the convergence order of the new algorithm is numerically calculated. The graphs (Figures $\ref{fig3}$-$\ref{fig4}$) indicate that the numerical scheme $(\ref{31})$-$(\ref{40})$ is stable while \textbf{Tables 1-4} suggest that constructed time-split linearized explicit/implicit approach is second-order accurate in time and fourth-order convergent in space. Both tables and figures show that the computed solutions do not increase with time and converge to the analytical one. Furthermore, for appropriate initial conditions and time step satisfying restriction $(\ref{43})$, Figure $\ref{fig5}$ indicates that the water depth and velocities waves propagate with perfect values and the peak of floods is observed on $21$ September $2024$ whereas for small initial water depth or time step which does not satisfy limitation $(\ref{43})$, small numerical oscillations should destroy the computed solutions. Thus, time steps satisfying the stability restriction $(\ref{43})$ together with suitable initial water depth must generate good water depth and velocities during different seasons which will allow to predict the minimum and maximum flood extents in Cameroon far north region. Our future works will develop a time-split Lax-Wendroff/Crank-Nicolson technique in an approximate solution of a three-dimensional tectonic deformation problem.

      \subsection*{Ethical Approval}
     Not applicable.
     \subsection*{Availability of supporting data}
     Not applicable.
     \subsection*{Declaration of Interest Statement}
     The author declares that he has no conflict of interests.
     \subsection*{Funding}
     Not applicable.
     \subsection*{Authors' contributions}
     The whole work has been carried out by the author.

          \begin{figure}
         \begin{center}
         Stability of time-split linearized explicit/implicit for 2d-hydrodynamic flow.
         \begin{tabular}{c c}
         \psfig{file=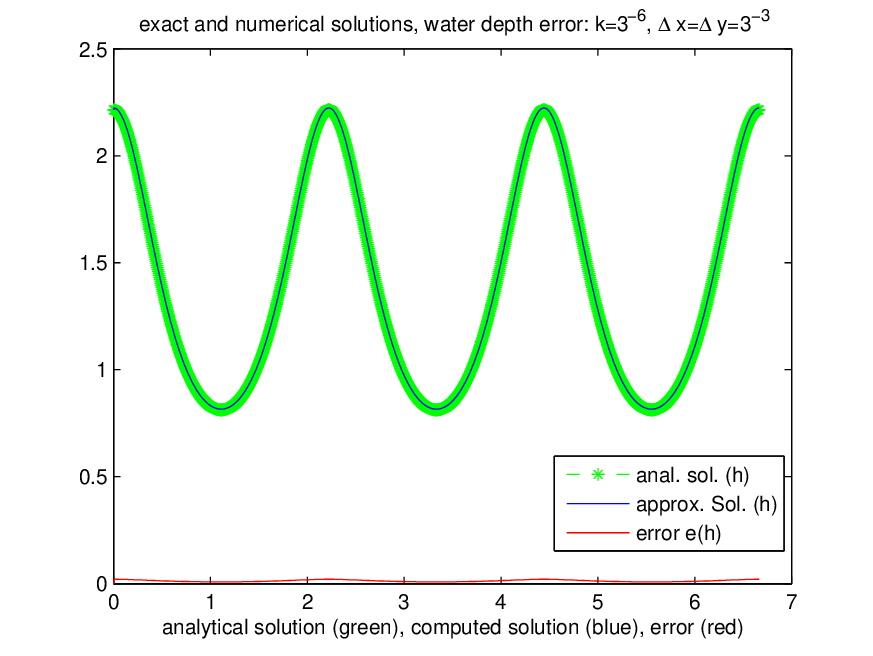,width=7cm} & \psfig{file=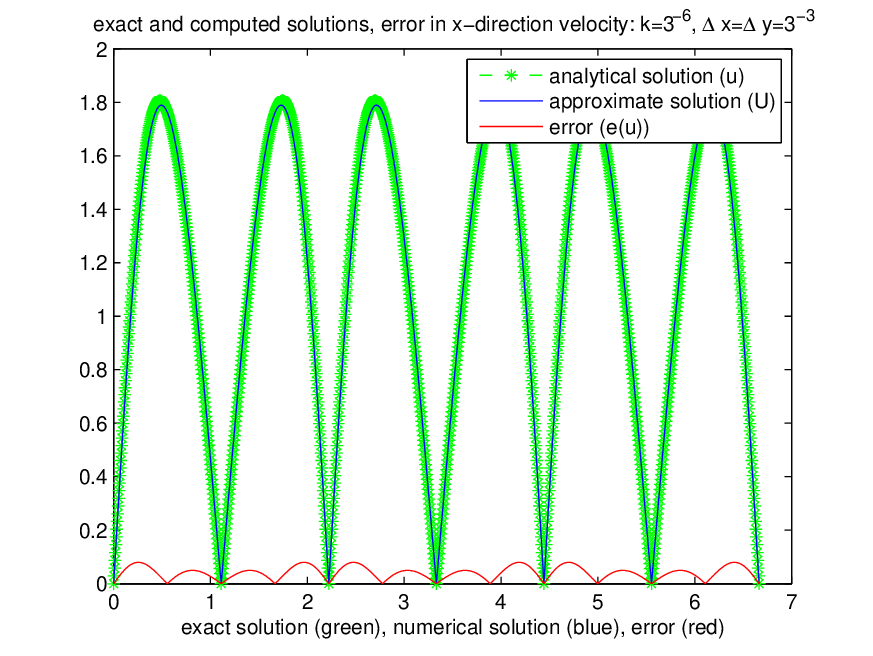,width=7cm}\\
         \psfig{file=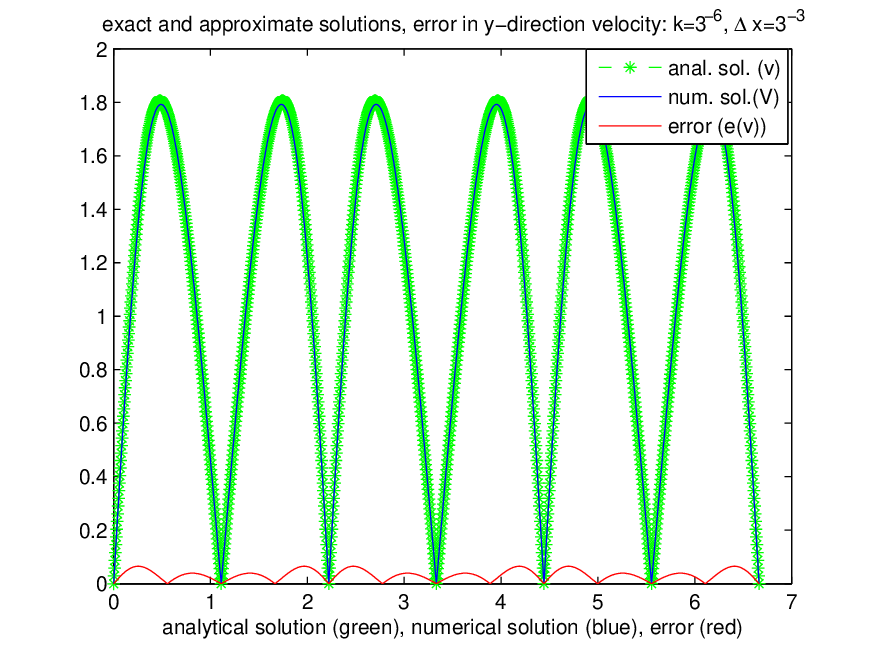,width=7cm} & \psfig{file=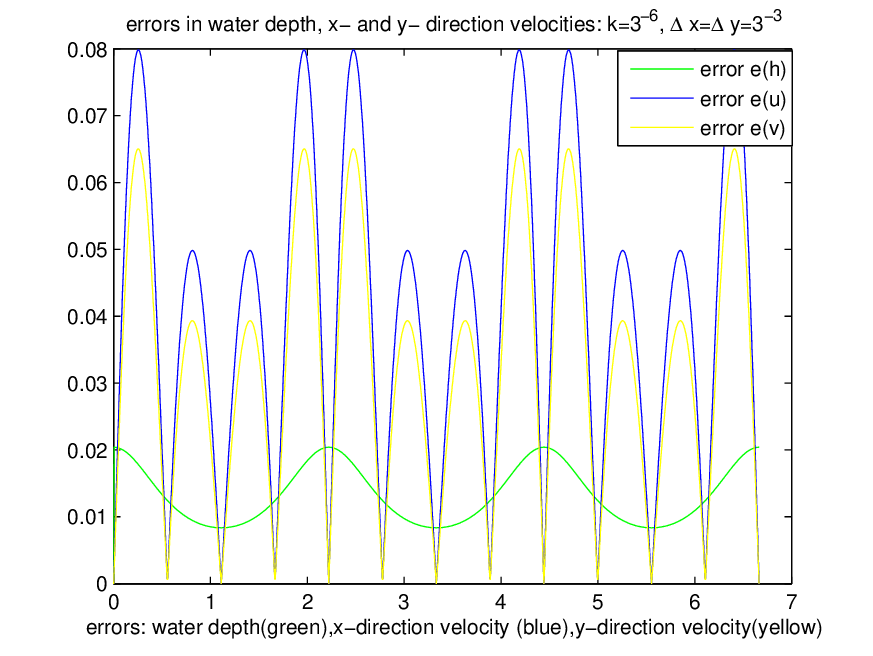,width=7cm}\\
         \psfig{file=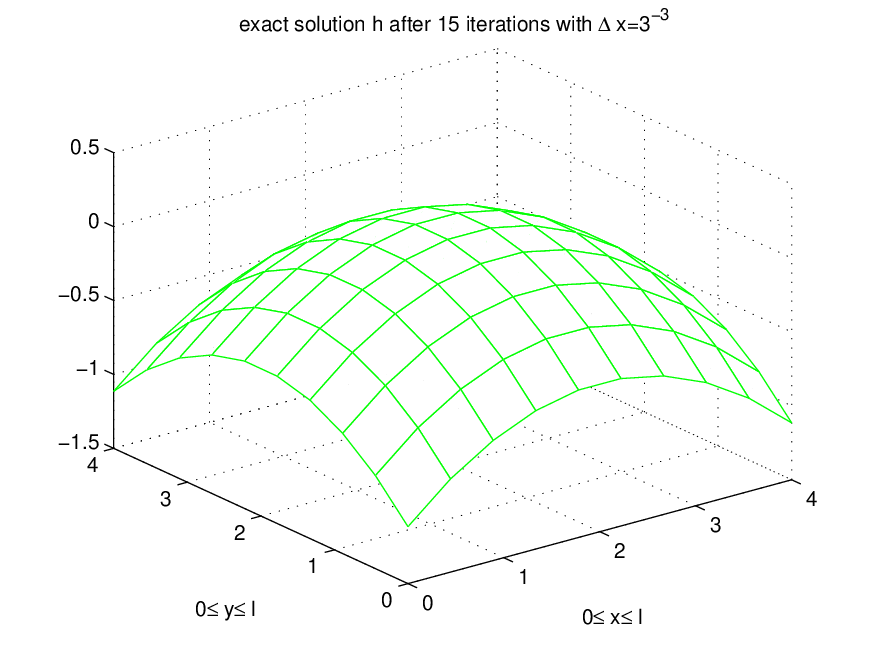,width=7cm} & \psfig{file=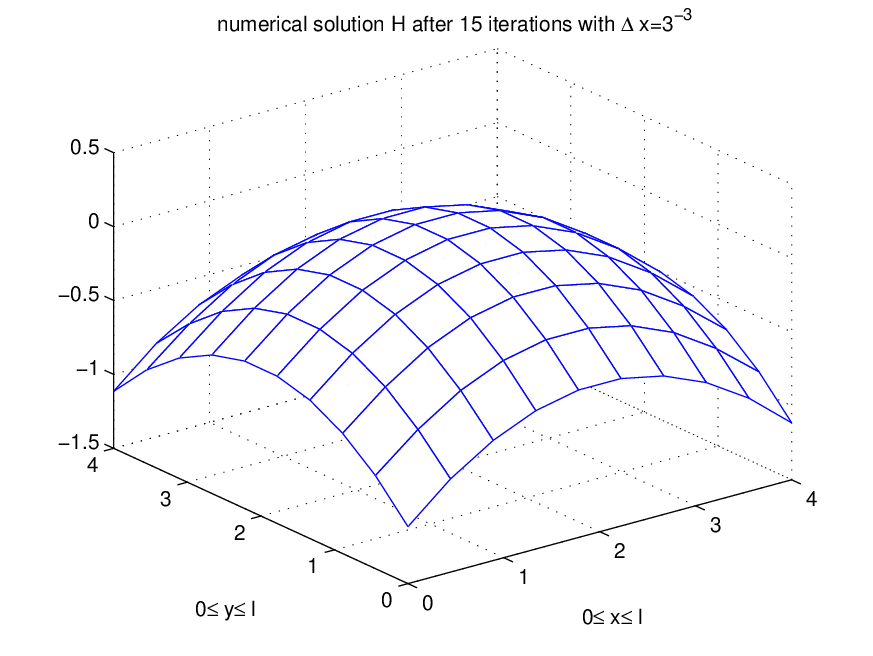,width=7cm}\\
         \psfig{file=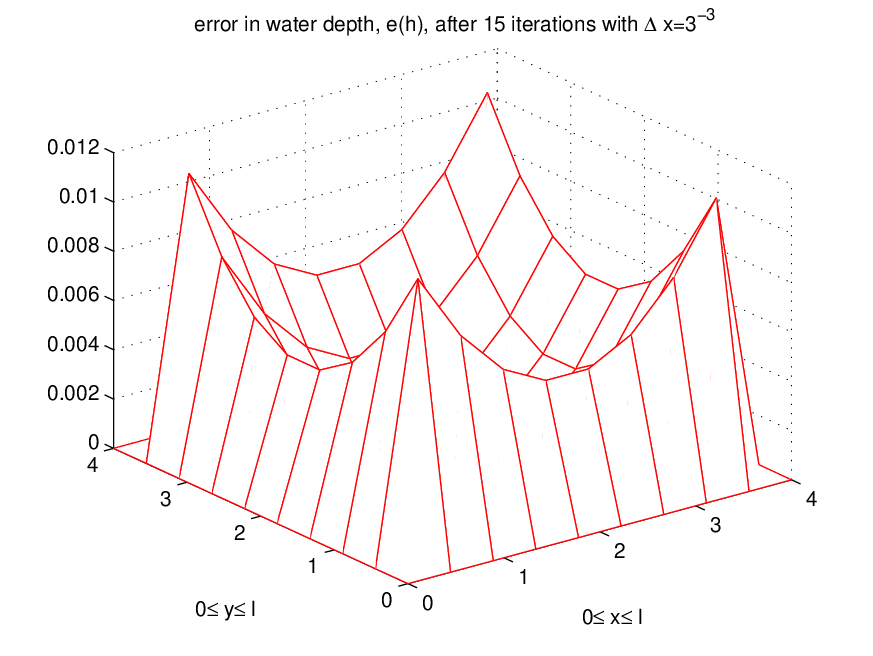,width=7cm} & \psfig{file=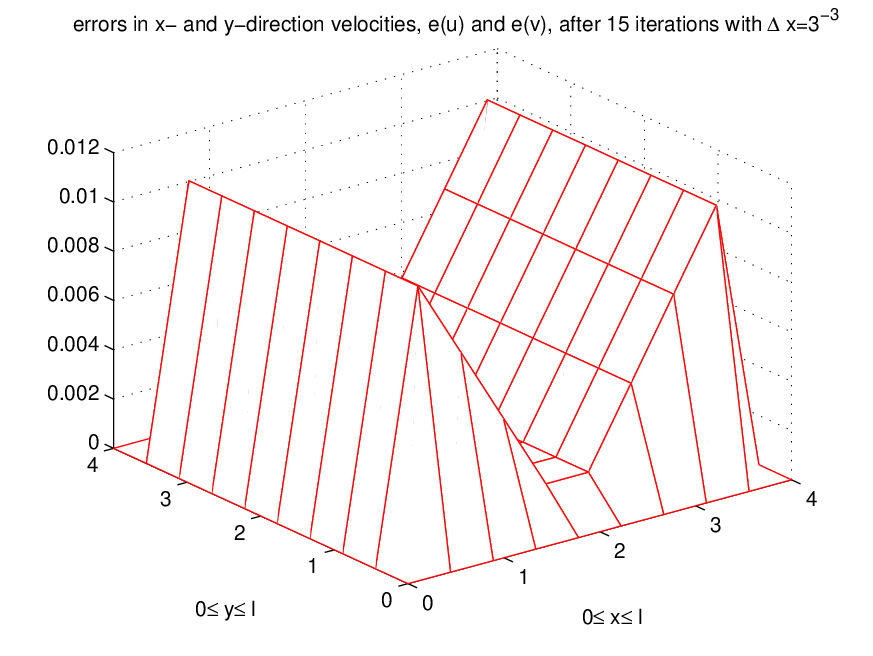,width=7cm}\\
           $ 0\leq x(m)\leq 4$ & $0\leq y(m)\leq 4$
         \end{tabular}
        \end{center}
        \caption{Graphs of water depth, x- and y-direction velocities and errors corresponding to Example 1.}
        \label{fig3}
        \end{figure}

    \begin{figure}
     \begin{center}
       Stability of time-split linearized explicit/implicit for 2d-hydrodynamic flow.
      \begin{tabular}{c c}
         \psfig{file=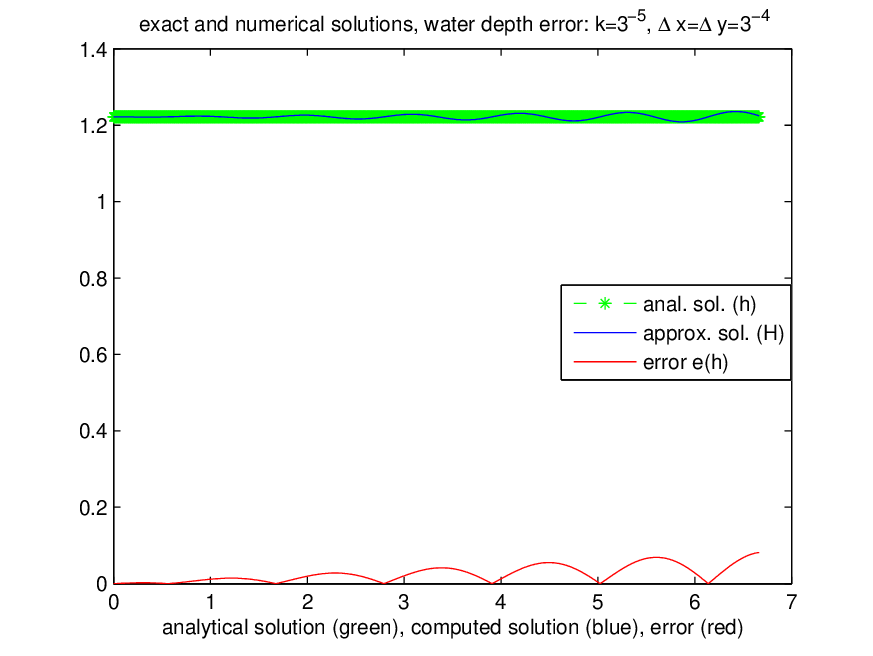,width=7cm} & \psfig{file=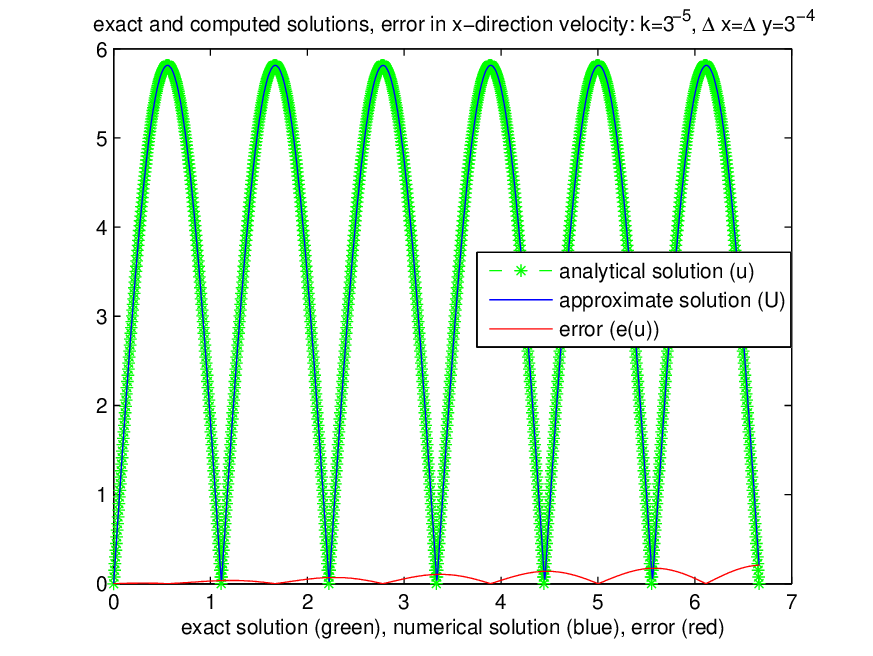,width=7cm}\\
         \psfig{file=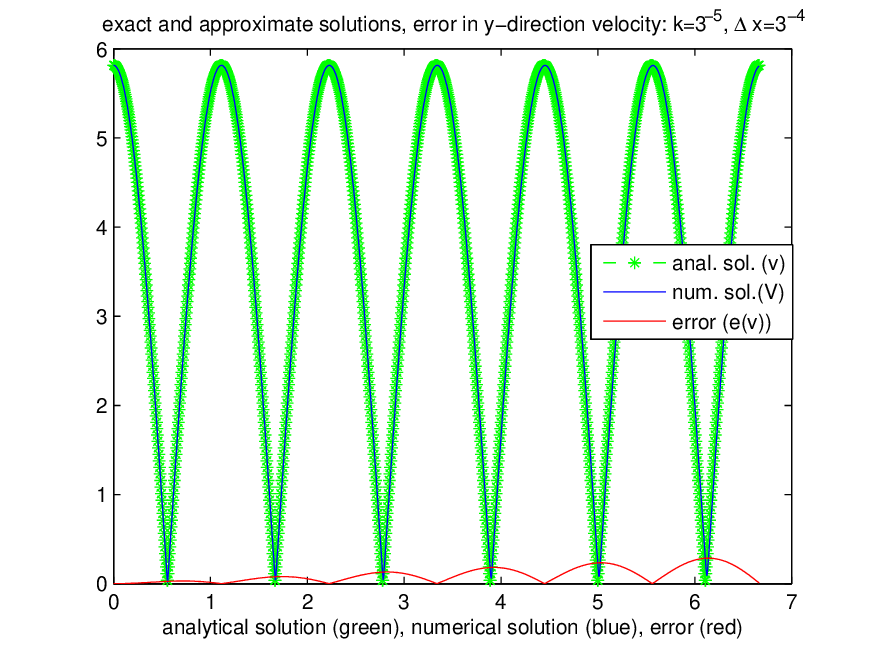,width=7cm} & \psfig{file=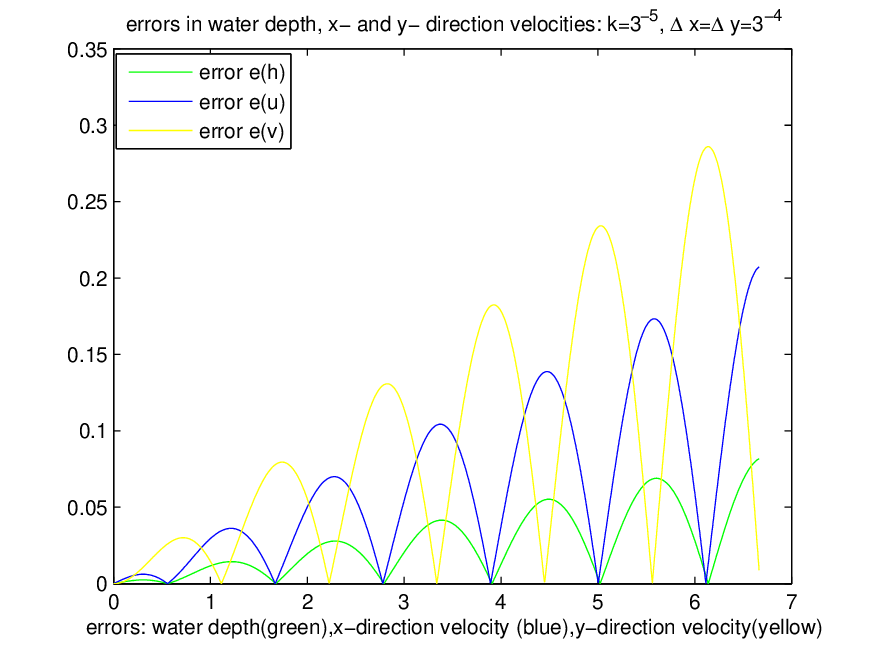,width=7cm}\\
         \psfig{file=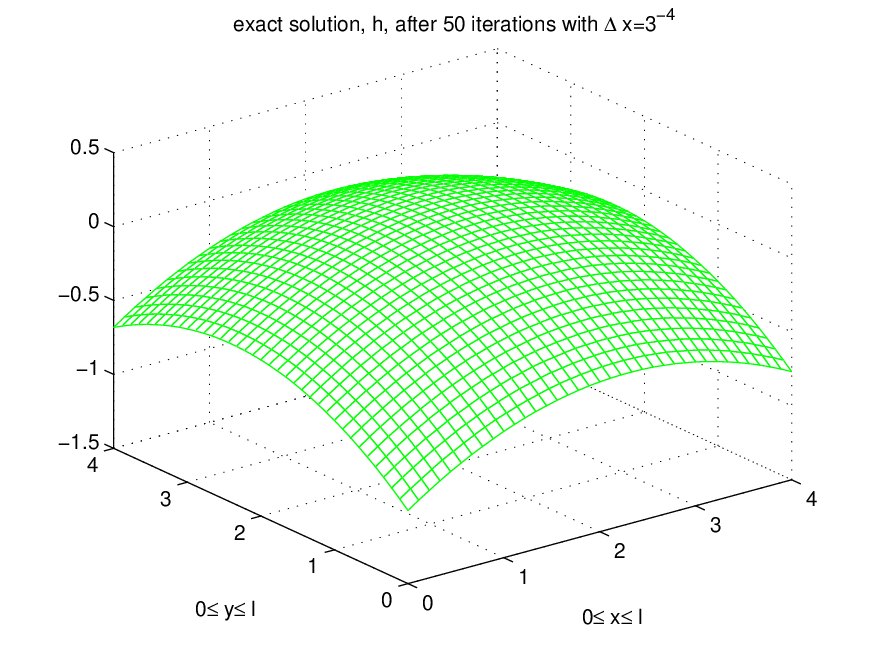,width=7cm} & \psfig{file=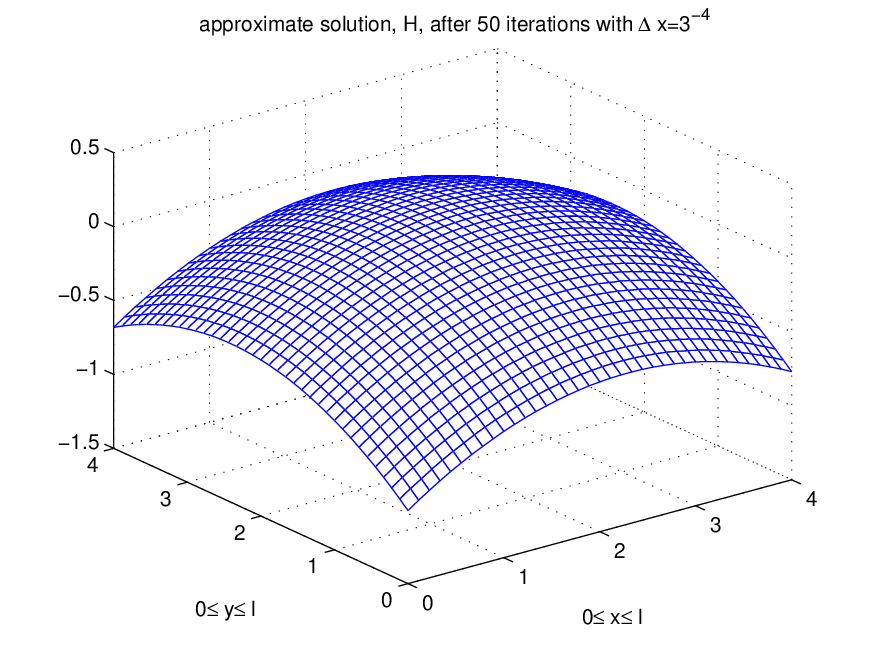,width=7cm}\\
         \psfig{file=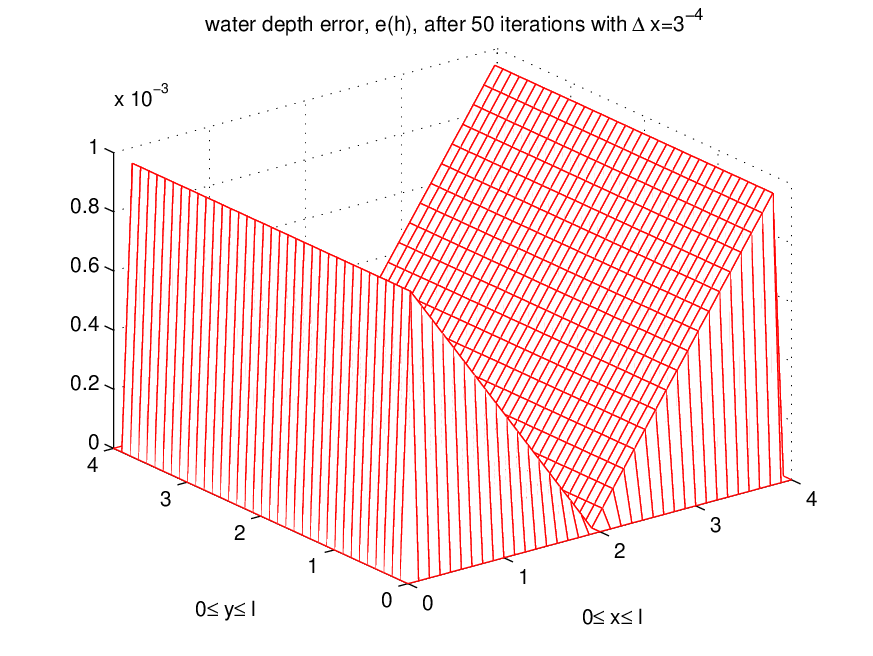,width=7cm} & \psfig{file=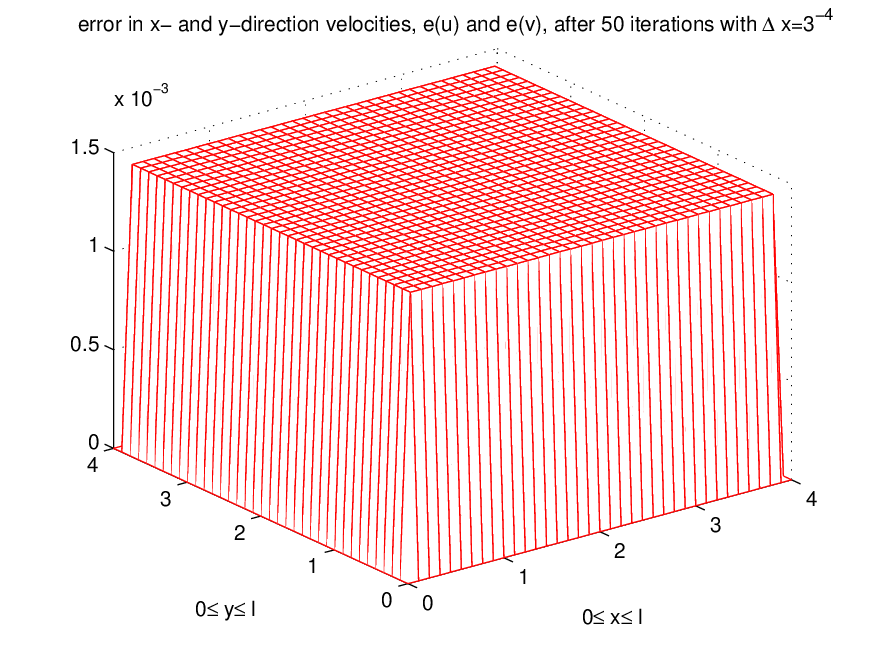,width=7cm}\\
           $ 0\leq x(m)\leq 4$ & $0\leq y(m)\leq 4$
         \end{tabular}
        \end{center}
         \caption{Graphs of water depth, x- and y-direction velocities and errors corresponding to Example 2.}
          \label{fig4}
          \end{figure}

      \begin{figure}
     \begin{center}
       Analysis of floods in the Logone river with various initial conditions.
      \begin{tabular}{c c}
         \psfig{file=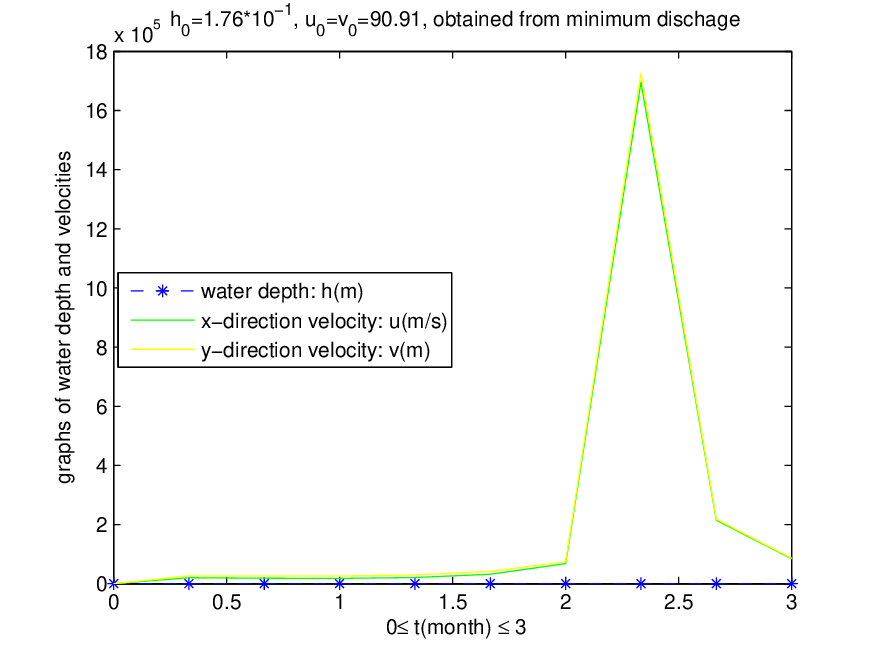,width=5.5cm} & \psfig{file=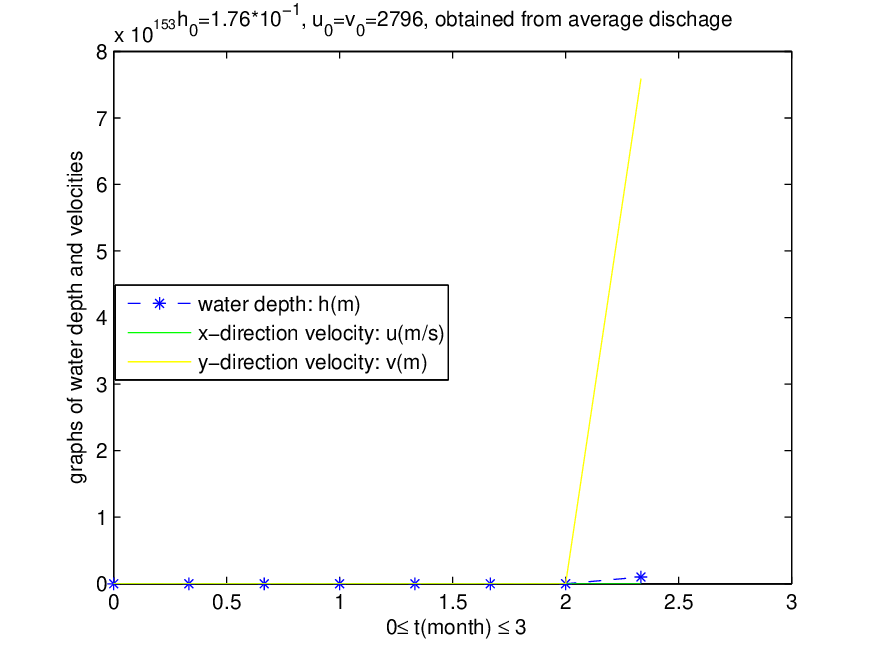,width=5.5cm}\\
         \psfig{file=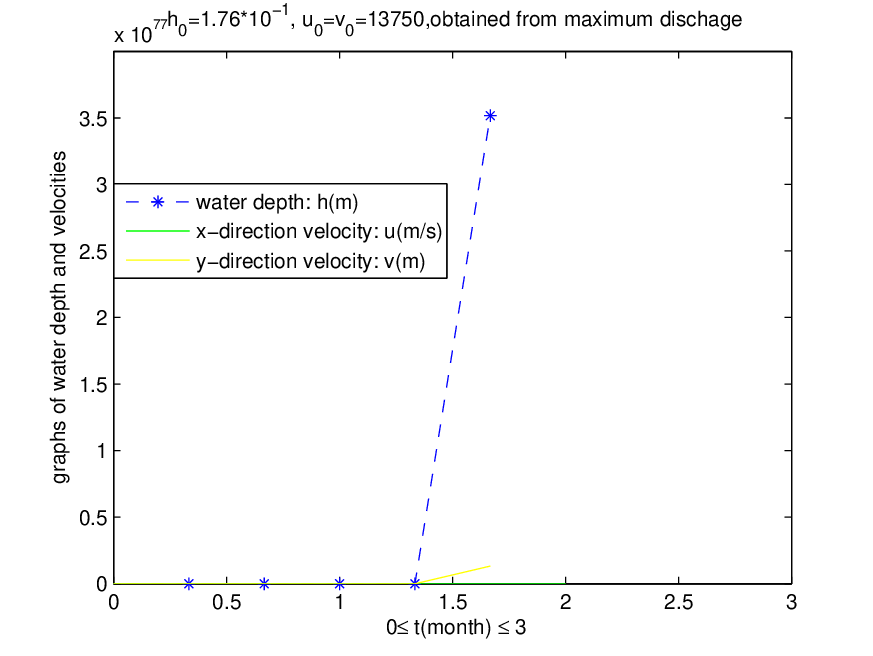,width=5.5cm} & \psfig{file=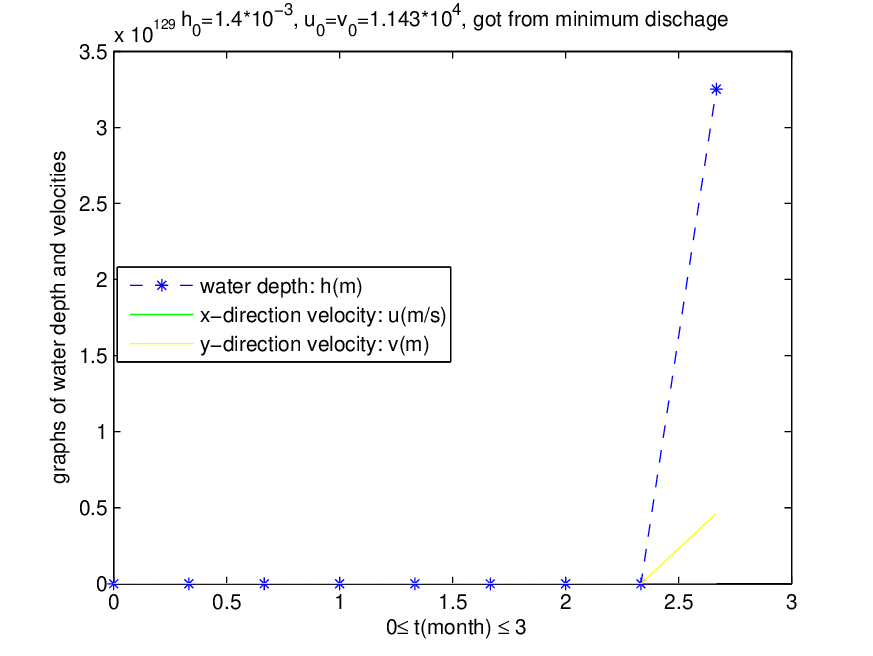,width=5.5cm}\\
         \psfig{file=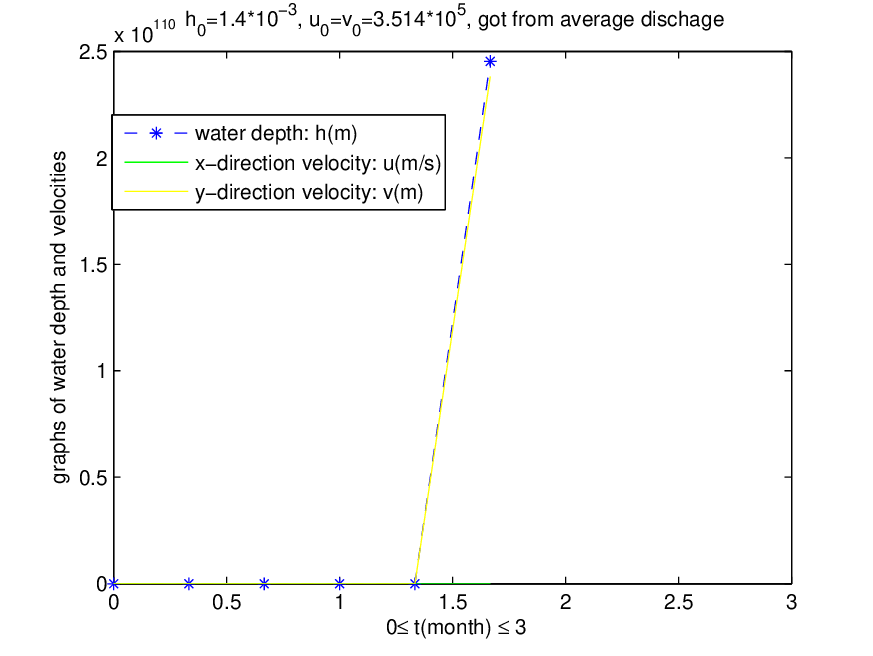,width=5.5cm} & \psfig{file=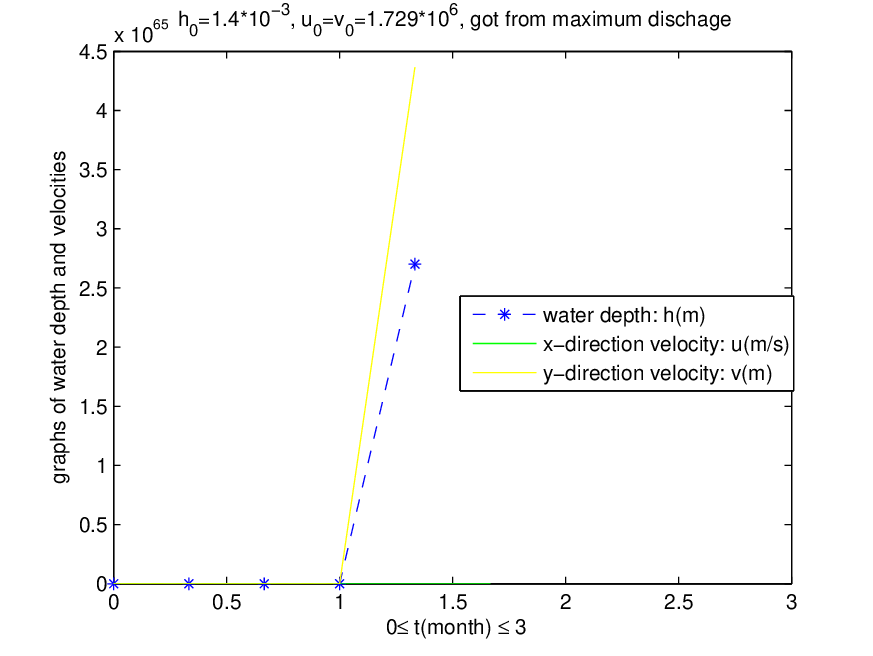,width=5.5cm}\\
         \psfig{file=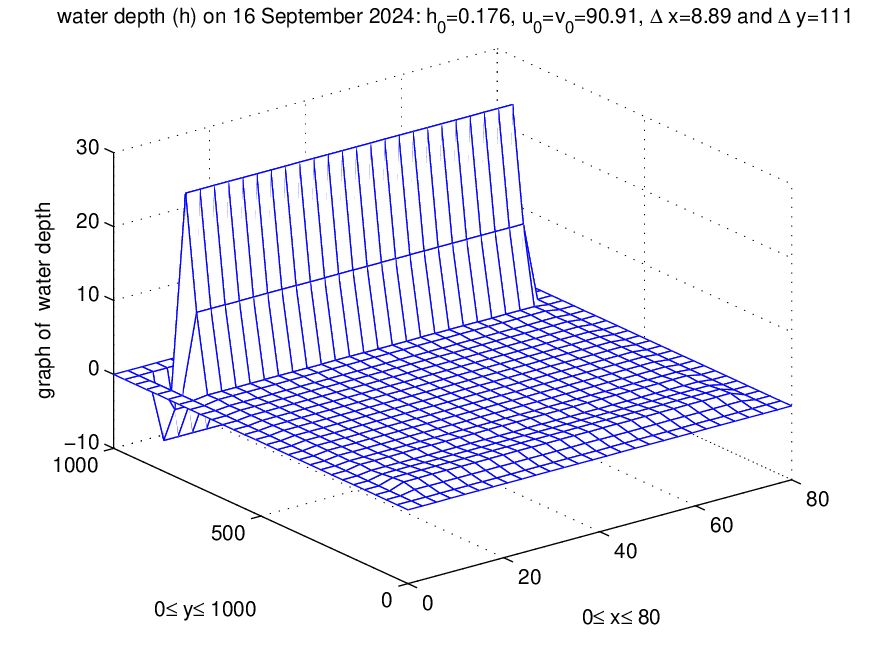,width=5.5cm} & \psfig{file=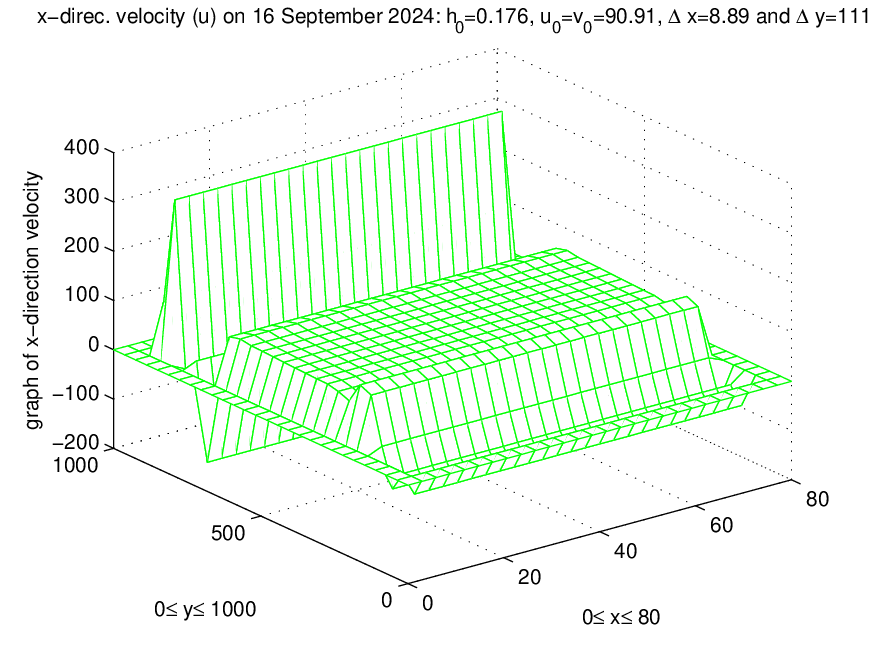,width=5.5cm}\\
         \psfig{file=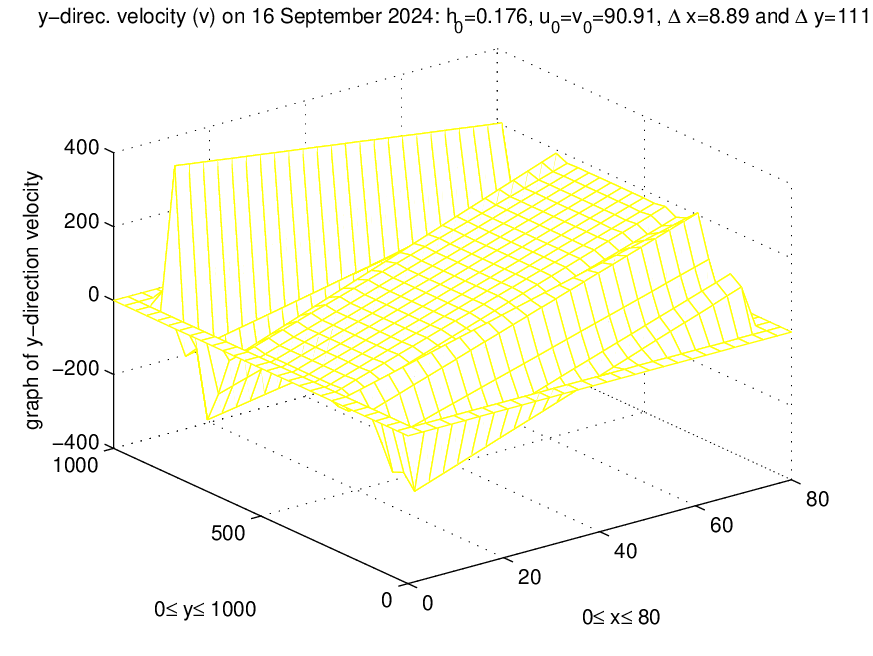,width=5.5cm} &
         \end{tabular}
        \end{center}
         \caption{Graphs of water depth, x- and y-direction velocities for floods in Logone river.}
          \label{fig5}
          \end{figure}

\begin{thebibliography}{99}

     \bibitem{15db}
     S. Abarbaned, A. Kumar. "Compact high-order schemes for the Euler equations", J. Scientific Computing, $3(1988)$, $275$-$288$.

     \bibitem{19yzw}
     M.J. Ablowitz, B. M. Herbst. "On the numerical solution of the sine-Gordon equation. 1. Integrable discretizations and homoclinic manifolds", J. Comput. Phys., $126(1996)$, $299$-$314$.

     \bibitem{8yzw}
     K. O. Aiyesimoju, R. J. Sobey. "Process splitting of the boundary conditions for the advection-dispersion equation", Int. J. Numer. Methods Fluids, $9(1989)$, $235$-$244$.

     \bibitem{1en}
      R. T. Alqahtani, J. C. Ntonga, E. Ngondiep. "Stability analysis and convergence rate of a two-step predictor-corrector approach for shallow water equations with source terms", AIMS Mathematics, $8(4)$ $(2023)$, $9265$-$9289$.

      \bibitem{33yzw}
      W. Bao, Q. Du. "Computing the ground state solution of Bose-Einstein condensates by a normalized gradient flow", SIAM J. Sci. Comput., $25(2004)$, $1674$-$1697$.

       \bibitem{91yzw}
       G. F. Carey, Y. Shen. "Least-squres finite element approximation of Fisher's reaction diffusion equation", Numer. Meth. partial Differential Equations, $11(1995)$, $175$-$186$.

      \bibitem{5yzw}
      L. Demkowicz, T. J. Oden, W. Rachowicz. "A new finite element method for solving compressible Navier-Stokes equations based on an operator splitting method and h-p adaptivity", Comput. Methods Appl. Mech. Engng., $84(1990)$, $275$-$326$.

      \bibitem{11yzw}
      S. Descombes, M. Massot. "Operator splitting for nonlinear reaction-diffusion systems with an entropic structure: singular perturbation and order reduction", Numer. Math., $97(2004)$, $667$-$698$.

     \bibitem{fr}
      F.R. Fiedler, J.A. Ramirez. "A numerical method for simulating discontinuous shallow flow over an infiltrating surface", Int. J. Numer. Meth. Fluids, $32:$ $219$-$240,$ $(2000)$.

      \bibitem{26db}
       R. Garcia, R. A. Kahawaita. "Numerical solution of the St. Venant equations with MacCormack finite-difference scheme", Int. J. Numer. Meth. Fluids, $6(5)$ $(1986)$, $259$-$274$.

       \bibitem{13yzw}
       D. Goldman, T. J. kaper. "Nth-order operator splitting schemes and nonreversible systems", SIAM J. Numer. Anal., $33(1996)$, $349$-$367$.

       \bibitem{42yzw}
       D. Gottlieb, J. S. Hesthaven. "Spectral methods for hyperbolic problems", J. Comput. Appl. Math., $128(2001)$, $83$-$131$.

       \bibitem{41yzw}
       D. Gottlieb, E, tadmor. "The CFL condition for spectral approximations to hyperbolic initial-boundary value problems", Math. Comput., $56(1991)$, $565$-$588$.

      \bibitem{16db}
      M. M. Gupta. "High accuracy solutions of incompressible Navier-Stokes equations", J. Comput. Phys., $93(1991)$, $343$-$357$.

     \bibitem{hernonin2013}
      J. Henonin, B. Russo, O. Mark, P. Gourbesville. "Real-time urban flood forecasting and modelling-a state of the art", J. Hydroinformatics $15(3)$ $(2013)$, $717$-$736$.

     \bibitem{23db}
      R. Hixon, E. Turkel. "Compact implicit MacCormack type schemes with high accuracy", J. Comput. Phys., $158$ $51$-$70$ $(2000).$

      \bibitem{27yzw}
      H. Holden, K. H. Karlsen, N. H. Risebro. "Operator splitting methods for generalized Korteweg-de Vries equations", J. Comput. Phys., $153(1999)$, $203$-$222$.

      \bibitem{9yzw}
      L. A. Khan, P. L. -F. Liu. "Numerical analyses of operator-splitting algorithms for the two-dimensional advection-diffusion equation", Comput. Meth. Appl. Mech. Engng., $152(1998)$, $337$-$359$.

     \bibitem{hok}
      H. O. Kreiss. "On difference approximations of the dissipative type for hyperbolic differential equations", Comm. Pure Appl. Math., $17(1964)$ $335$-$353,$
      MR $29\#4210$.

      \bibitem{29yzw}
      J. Lee, B. Fornberg. "A split step approach for the 3-D Maxwell's equations", J. Comput. Appl. Math., $158(2003)$, $485$-$505$.

      \bibitem{17db}
       S. K. Lele. "Compact finite difference schemes with spectral-like resolution", J. Comput. Phys., $103(1992)$, $16$-$42$.

      \bibitem{7yzw}
       R. J. LeVeque. "Intermediate boundary conditions for time split methods applied to hyperbolic partial differential equations", Math. Comput., $47(1986)$, $37$-$54$.

       \bibitem{18db}
       M. Li, T. Tang. "A compact fourth-order finite difference scheme for the steady incompressible Navier-Stokes equations", Int. J. Numer. Meth. Fluids, $20(1995)$, $1137$-$1151$.

       \bibitem{19db}
       M. Li, T. Tang. "A compact fourth-order finite difference scheme for unsteady viscous incompressible flows", J. Scientific Computing, $16(2001)$, $29$-$45$.

       \bibitem{21db}
      R. W. Maccormack. "The effect of viscosity in hypervelocity impact cratering", AIAA, $69$-$354,$ $(1969).$

      \bibitem{28yzw}
      G. M. Muslu, H. A. Erbay. "A split-step Fourier method for the complex modified Korteweg-de Vries equation", Comput. Math. Appl., $45(2003)$, $503$-$514$.

    \bibitem{3en}
     E. Ngondiep. "An efficient numerical approach for solving three-dimensional Black-Scholes equation with stochastic volatility", Math. Meth. Appl. Sci., $(2024)$, $1$-$21$, DOI $10.1002$/mma.$10576$.

    \bibitem{5en}
     E. Ngondiep. "An efficient high-order two-level explicit/implicit numerical scheme for two-dimensional time fractional mobile/immobile advection-dispersion model", Int. J. Numer. Meth. Fluids, $96(8)$ $(2024)$, $1305$-$1336$.

    \bibitem{10en}
    E. Ngondiep. "Stability analysis of MacCormack rapid solver method for evolutionary Stokes-Darcy problem", J. Comput. Appl. Math., $345(2019)$, $269$-$285$.

    \bibitem{11en}
     E. Ngondiep. "A six-level time-split Leap-Frog/Crank-Nicolson approach for two-dimensional nonlinear time-dependent convection-diffusion-reaction equation", Int. J. Comput. Meth., $20(08)$ $(2023)$, $2250064$, Doi: $10.1142$/S$0219876222500645$.

     \bibitem{12en}
     E. Ngondiep. "An efficient three-level explicit time-split scheme for solving two-dimensional unsteady nonlinear coupled Burgers equations", Int. J. Numer. Methods Fluids, $92(4)$ $(2020)$, $266$-$284$.

     \bibitem{2en}
     E. Ngondiep. "Unconditional stability over long time intervals of a two-level coupled MacCormack/Crank-Nicolson method for evolutionary mixed Stokes-Darcy model", J. Comput. Appl. Math., $409(2022)$, $ 114148$, Doi: $10.1016$/j.cam.$2022.114148$.

    \bibitem{13en}
     E. Ngondiep. "Long time unconditional stability of a two-level hybrid method for nonstationary incompressible Navier-Stokes equations", J. Comput. Appl. Math. $345(2019)$, $501$-$514$.

    \bibitem{7en}
     E. Ngondiep. "A high-order numerical scheme for multidimensional convection-diffusion-reaction equation with time-fractional derivative", Numer. Algorithms, $94(2023)$, $681$-$700$.

     \bibitem{16en}
     E. Ngondiep. "A high-order combined finite element/interpolation approach for solving nonlinear multidimensional generalized Benjamin-Bona-Mahony-Burgers' equations", Math. Comput. Simul., $215(2024)$, $560$-$755$.

     \bibitem{9en}
     E. Ngondiep, A. N. Njomou, G. I. Mondinde. "A predictor-corrector approach to investigate and predict the dynamic of cytokine levels and human immune cell activation to Staphylococcus Aureus", Int. J. of Biomath., DOI: $10.1142$/S$1793524524501122$, $(2024)$, $1$-$29$.

    \bibitem{14en}
    E. Ngondiep. "Long time stability and convergence rate of MacCormack rapid solver method for nonstationary Stokes-Darcy problem", Comput. Math. Appl., $75$, $(2018)$, $3663$-$3684$.

    \bibitem{15en}
    E. Ngondiep. "A novel three-level time-split MacCormack scheme for two-dimensional evolutionary linear convection-diffusion-reaction equation with source term", Int. J. Comput. Math., $98(1)$ $(2021)$, $47$-$74$.

     \bibitem{8en}
     E. Ngondiep. "A robust numerical two-level second-order explicit approach to predict the spread of covid-$2019$ pandemic with undetected infectious cases", J. Comput. Appl. Math., $403(2022)$, DOI:10.1016/j.cam.2021.113852.

    \bibitem{17en}
    E. Ngondiep. "A posteriori error estimates of MacCormack rapid solver method for nonstationary incompressible Navier-Stokes equations", J. Comput. Appl. Math., $438(2024)$, $115569$.

    \bibitem{6en}
     E. Ngondiep. "A fast three-step second-order explicit numerical approach to investigating and forecasting the dynamic of corruption and poverty in Cameroon", Heliyon, DOI: $10.1016/$j.heliyon.$2024$.e$38236$, $10(19)$ $(2024)$.

     \bibitem{4en}
     E. Ngondiep. "A two-level fourth-order approach for time-fractional convection-diffusion-reaction equation with variable coefficients", Commun. Nonlinear Sci. Numer. Simul., $111(2022)$, $106444$, Doi: $10.1016$/j.cnsns.$2022.106444$.

     \bibitem{20db}
     S. K. Pandit, J. C. Kalita, D. C. Dalal. "A transient higher order compact scheme for incompressible viscous flows on geometries beyond rectangular", J. Comput. Phys., $225(2007)$, $1100$-$1124$.

     \bibitem{10yzw}
     B. Sportisse. "An analysis of operator splitting techniques in the stiff case", J. Comput. Phys., $161(2000)$, $140$-$168$.

    \bibitem{sb}
    J. Stoer, R. Bulirsch. "Introduction to numerical analysis, text in applied mathematics", second edition, Spring-Verlag, $(1991).$

    \bibitem{12yzw}
    G. Strang. "On the construction and comparison of difference schemes", SIAM J. Numer. Anal., $5(1968)$, $506$-$517$.

    \bibitem{26yzw}
    T. R. Taha, M. J. Abowitz. "Analytical and numerical aspects of certain nonlinear evolution equations, II. Numerical, nonlinear Schr\"{o}dinger equation", J. Comput. Phys., $55(1984)$, $231$-$253$.

    \bibitem{48tdc}
    W. C. Thacker. "Some exact solutions to the nonlinear shallow water wave equations", J. Fluid Mech., $107(1981)$, $499$-$508$.

    \bibitem{3fr}
    C. B. Vreugdenhil. "Numerical method for shaloww water flow", Kluwer Academic Publishers, Dordrecht $(1994)$.

    \bibitem{yu2014}
    P. S. Yu, T. C. Yang, C. M. Kuo, S. T. Chen. "Development of an integrated computational tool to assess climate change impacts on water supply-demand and flood inundation", J. Hydroinformatics, $16(3)$ $(2014)$, $710$-$730$.

    \bibitem{2fr}
    W. Zhang, T. W. Cundy. "Modeling of two-dimensional overland flow", Water Resour. Res., $25(1989)$, $2019$-$2035$.

    \bibitem{8db}
    C. Zoppou, S. Roberts. "Numerical solution of the two-dimensional unsteady dam-break", Appl. Math. Mode., $24(2000)$, $457$-$475$.

     \bibitem{2lc}
     "Logone river/river, Africa", Encyclopedia Britannica. Retrieved, $2017$-$06$-$08$.

     \bibitem{3lc}
     "GRDC-Chari basin", Der Logone in Bongor, $2009$.

    \bibitem{4lc}
    "Cameroon: Floods-Oct 2013", Reliefweb, $04$ June $2014$. Retrieve, $2014$-$06$-$10$.

    \bibitem{unicef}
    "Unicef Cameroon flash update $N^{o}3$: Floods-Far North", $25$ September $2024$.

    \bibitem{2unicef}
     OCHA, flood snapshot as of $19$ September. "Cameroun: Extr\^{e}me-Nord-Aper\c{c}u des inondations (au $19$ Septembre $2024$)-Cameroon|ReliefWeb".

     \bibitem{4unicef}
     Regional Delegations of Basic and Secondary Education, $15$ September.

     \bibitem{5unicef}
     Regional Delegation of Public Health of the Far North, presented on $13$ September $2024$ in Maroua, Far North.

     \bibitem{6unicef}
     Centre for humdata, based on data from WorldPop, FloodScan, HDX, ECMWF, on $19$ September.


     \end{thebibliography}
     \end{document}